\let\mathbb\mathds
\DeclareMathAlphabet\mathbfcal{OMS}{cmsy}{b}{n}
\pgfplotsset{compat=1.13}
\definecolor{bluegray}{rgb}{0.4, 0.6, 0.8}
\definecolor{turquoise}{rgb}{0.2, 0.7, 0.6}
\tikzset{Rightarrow/.style={double equal sign distance,>={Implies},->},
	triple/.style={-,preaction={draw,Rightarrow}},
	quadruple/.style={preaction={draw,Rightarrow,shorten >=0pt},shorten >=1pt,-,double,double
		distance=0.2pt}}
\def\scr{\EuScript}
\def\on{\operatorname}
\def\sf{\mathsf}
\def\CC{\mathbb{C}}
\def\ZZ{\mathbb{Z}}
\def\hH{\scr{H}}
\def\Fun{\on{Fun}}
\def\Hom{\on{Hom}}
\def\id{\on{id}}
\def\Set{{\sf{Set}}}
\def\RR{\mathbb{R}}
\def\St{\on{St}}
\def\HC{\on{HC}}
\def\SimpEff{{\sf{SimpEff}}}
\def\Eff{{\sf{Eff}}}
\def\one{\mathbb{1}}
\def\zero{\mathbb{0}}
\def\Den{\on{Den}}
\def\Herm{\on{Herm}}
\def\Proj{\on{Proj}}
\def\Tr{\on{Tr}}
\newcommand{\column}[3]{\ensuremath{\left( \begin{matrix}
#1 \\
#2 \\
#3
\end{matrix}  \right) }}
\DeclarePairedDelimiterX\set[1]{\lbrace}{\rbrace}{#1}
\declaretheoremstyle[bodyfont=\itshape,notefont=\bfseries]{abellanA}
\declaretheoremstyle[notefont=\bfseries]{abellanB}
\declaretheorem[style=abellanA,numberwithin=section,name={Theorem}]{theorem}
\declaretheorem[style=abellanA,numberlike=theorem,name={Lemma}]{lemma}
\declaretheorem[style=abellanB,numberlike=theorem,name={Definition}]{definition}
\declaretheorem[style=abellanB,numberlike=theorem,name={Remark}]{remark}
\declaretheorem[style=abellanB,numberlike=theorem,name={Construction}]{construction}
\declaretheorem[style=abellanA,numberlike=theorem,name={Proposition}]{proposition}
\declaretheorem[style=abellanB,numberlike=theorem,name={Example}]{example}
\declaretheorem[style=abellanA,numberlike=theorem,name={Corollary}]{corollary}
\let\leq\leqslant
\let\geq\geqslant
\let\epsilon\varepsilon
\DeclareMathOperator*\colim{colim}
\def\op{{\on{op}}}
\newlength\squareheight
\newcommand\squareslashleft{\tikz{\draw (0,0) rectangle (\squareheight,\squareheight);\draw(0,0) -- (\squareheight,\squareheight)}}
\DeclareMathOperator\squarediv{\squareslashleft}
\newcommand\squareslashright{\tikz{\draw (0,0) rectangle (\squareheight,\squareheight);\draw(\squareheight,0) -- (0,\squareheight)}}
\DeclareMathOperator\squaredivright{\squareslashright}
\title{{Simplicial effects and weakly associative partial groups}}
	\author[1]{Cihan Okay\footnote{cihan.okay@bilkent.edu.tr}}
	\author[1]{Victor Castillo\footnote{victor.torres@cimat.mx}}
	\author[2]{Walker H. Stern\footnote{walker.stern@tum.de}}
	\affil[1]{Department of Mathematics, Bilkent University, Ankara, Turkey}
	\affil[2]{Department of Mathematics, Technical University of Munich, Munich, Germany}
\date{\today}
\begin{document}
    \maketitle

\begin{abstract} 
In this paper, we introduce a new category of simplicial effects that extends the categories of effect algebras and their multi-object counterpart, effect algebroids. Our approach is based on relaxing the associativity condition satisfied by effect algebras and, more generally, partial monoids. Within this framework, simplicial effects and weakly associative partial groups arise as two extreme cases in the category of weak partial monoids. Our motivation is to capture simplicial structures from the theory of simplicial distributions and measurements that behave like effects.
\end{abstract}

	\tableofcontents

\section{{Introduction}}

Quantum measurements can be described using the theory of effect algebras, introduced in \cite{foulis1994effect}. A canonical example is the effect algebra of projectors on a Hilbert space, which forms the basis of projective measurement in quantum theory. When such a measurement is performed on a quantum state, it produces a probability distribution on the set of possible outcomes via the Born rule. In this sense, projectors act as ``effects'' in quantum theory.
Their salient algebraic properties are captured by effect algebras, which provide a partial operation satisfying an associativity condition. More precisely, these objects can be viewed as partial monoids in the sense of \cite{Segal}.

The simplicial formulation of measurements in quantum theory, introduced in \cite{sdist}, employs effect algebras to construct simplicial objects that behave like ``simplicial effects.'' More precisely, both the measurements and outcomes in this theory have a spatial structure specified by simplicial sets.
A simplicial set consists of a sequence of sets $X=\set{X_n}_{\geq 0}$, where each $X_n$ represents the set of $n$-simplices (basic building blocks of the structure),
together with simplicial structure maps, which specify how these simplices can be glued and collapsed. 
A \emph{simplicial distribution} on a measurement space $X$ and an outcome space $Y$ is then a simplicial set map 
\[
\begin{tikzcd}
p : X \arrow[r] & D(Y)
\end{tikzcd}
\]
where $D(Y)$ is the simplicial set whose $n$-simplices are finite-support probability distributions on $Y_n$.
The effect algebra of projectors can be used to define a simplicial set $P_\hH(Y)$ whose $n$-simplices are given by
\[
P_{\scr{H}}(Y)_n := \{\,\Pi : Y_n \to \on{Proj}(\scr{H})
\mid \Pi\text{ fin.\ supp.}, \,\sum_{y \in Y_n} \Pi(y) = \mathbb{1}\}.
\]
Given a \emph{simplicial measurement}, {that is}, a simplicial map $\Pi:X\to P_\hH(Y)$,  a density operator (i.e.\ a positive trace-$1$ operator) $\rho$ can be used to obtain a simplicial distribution of the form
\[
\begin{tikzcd}
p_\rho : X \arrow[r,"\Pi"] & P_\hH(Y) \arrow[r,"\rho_*"] & D(Y)
\end{tikzcd}
\]
where $\rho_*$ is defined by sending a projective measurement $\{\Pi(y)\}_{y\in Y_n}$ to the distribution $p_\rho(y) = \Tr\bigl(\rho\,\Pi(y)\bigr)$.
A key example arises when $Y$ is the nerve space $N(\ZZ_{/d})$ of the additive group $\ZZ_{/d}$. Then $P_\hH(N\ZZ_{/d})$ can be identified with the simplicial set $N(\ZZ_{/d}, U(\hH))$, the \emph{commutative $d$-torsion classifying space} of the unitary group $U(\hH)$. This space is a simplicial subset of the nerve $N(U(\hH))$, further constrained by commutativity and $d$-torsion conditions. Concretely, an $n$-simplex consists of $n$-tuples $(g_1,g_2, \dots, g_n)$ of pairwise commuting unitary operators satisfying $(g_i)^d = \one$ for all $1 \leq i \le n$.
It turns out that $N(\ZZ_{/d}, U(\hH))$ behaves as a partial monoid but with a weakened associativity property. The underlying issue is that a tuple $(a,b,c)\in \ZZ_{/d}^{\times 3}$ satisfying $(ab)c=(ab)c$ and $a(bc)=(bc)a$ does not necessarily consist of pairwise commuting elements. Hence, the commutative nerve carries a partial operation that associates more weakly than the usual partial monoids defined in \cite{Segal}.

The aim of this work is to introduce a generalization of effect algebras and effect algebroids \cite{Roumen} that accommodates algebraic structures associating more weakly than those in \cite{Segal}, yet more strictly than the partial groups in \cite{Chermak}. We call the resulting objects \emph{simplicial effects}.
To arrive at our definition, we first develop the notion of weak associativity. To this end, we introduce the following categories:
\begin{itemize}
\item The category $\sf{PM}$ of partial monoids, whose objects are partial monoids in the sense of \cite{Segal}.
\item The weakened version, the category $\sf{WPM}$ of weak partial monoids.
\item The category $\sf{Mag}$ of partial unital magmas where no associativity condition is imposed on the partial operation.
\item Finally, $\sf{Mag}^{\on{ad}}$, consisting of partial unital magmas equipped with associativity data capturing the notion of associativity introduced in \cite{Chermak}.
\end{itemize}
In the diagram below, we show how these categories relate to each other and to the category of simplicial sets:
\[
\begin{tikzpicture}
			\path (0,0) node (PM) {$\sf{PM}$} ;
			\path (3,0) node (WPM) {$\sf{WPM}$};
			\path (6,0) node (Mag) {$\sf{Mag}$};
			\path (9,0) node (Magad) {$\sf{Mag}^{\on{ad}}$};
			\path (0,-3) node (S1) {$\Set_\Delta$} ;
			\path (3,-3) node (S2) {$\Set_\Delta$};
			\path (6,-3) node (S3) {$\Set_\Delta$};
			\path (9,-3) node (S4) {$\Set_\Delta$};
			\draw[->] (PM) to node[above] {$\Psi$} (WPM);
			\draw[->] (WPM) to node[above] {$\Phi$} (Mag);
			\draw[->] (Mag) to node[above] {$(-)^\sharp$} (Magad);
			\draw[->] (PM) to (S1);
			\draw[->] (WPM) to (S2);
			\draw[->] (Mag) to (S3);
			\draw[->] (Magad) to node[right] {$N$} (S4);
			\draw[double] (S1) to (S2);
			\draw[double] (S2) to (S3);
			\draw[double] (S3) to (S4);

			\draw[blue,decoration={brace}, decorate] (9+.5,-3.5) node {} -- (0-.5,-3.5);
			\path[blue] (4.5,-3.9) node {spiny, reduced};
			\draw[blue,decoration={brace}, decorate] (6+.5,-4.5) node {} -- (0-.5,-4.5);
			\path[blue] (3,-4.9) node {2-coskeletal};
			\draw[blue,decoration={brace}, decorate] (3+.5,-5.5) node {} -- (0-.5,-5.5);
			\path[blue] (1.5,-5.9) node {weakly 2-Segal};
			\draw[blue,decoration={brace}, decorate] (1,-6.5) node {} -- (0-.5,-6.5);
\path[blue] (0.25,-6.9) node {2-Segal};			
			
		\end{tikzpicture}
\]
As indicated in the top part of the diagram, there are fully faithful functors interrelating these categories. In addition, we define a nerve functor whose essential image in $\sf{Set}_\Delta$ is indicated in the diagram. The adjectives in blue describe the essential images of the vertical functors, and the bottom horizontal functors are identities. Together, these provide a complete characterization of the hierarchy of fundamental algebraic structures considered in this paper and their corresponding geometric and combinatorial interpretations.

Having established the notion of weak associativity, we proceed to examine invertibility. As it turns out, this concept is intrinsically linked to our desired objects of simplicial effects. We introduce the category of weakly associative partial groups by including the invertibility condition in weak partial monoids. Our motivating examples are the commutative $d$-torsion nerves. Since these objects represent one extreme case where every element has an inverse, we find that simplicial effects correspond to the other extreme, in which no non-trivial element has an inverse. To reach this conclusion, we reformulate Roumen's effect algebroids. In particular, we focus on the two key components in this definition: the \emph{orthocomplement} and the \emph{zero-in-one law}.
This brings us to the desired category of simplicial effects, denoted by $\sf{SimpEff}$, whose objects are spiny, inverseless, weakly $2$-Segal cyclic sets. We have fully faithful embeddings of categories
\[
\begin{tikzcd}
\sf{Eff} \arrow[r] & \sf{EffAlgd} \arrow[r] & \sf{SimpEff}. 
\end{tikzcd}
\]
We conclude by constructing a simplicial effect that is not an effect algebroid. Specifically, we modify $N(\ZZ_3,\CC^9)$ in such a way that the weak associativity property is preserved and the resulting partial monoid has no inverses. We demonstrate that this object is not an effect algebroid by showing that the associativity condition, formulated via the $2$-Segal condition, fails to hold.
Our construction also accounts for non-trivial states. We prove a version of Gleason’s theorem (for measurements with three outcomes) for this simplicial effect, demonstrating that its state space corresponds exactly to the space of all quantum states.

{
Our category of simplicial effects broadens the literature on effect algebras \cite{foulis1994effect,Roumen,RouCoh,cho2015introduction} and their connection to contextuality \cite{staton2018effect}. 
Recently, simplicial techniques have proven to be very useful in characterizing non-contextual distributions and extremal contextual distributions, leading to new results \cite{kharoof2023topological,kharoof2024extremal}. Similarly, we expect that a simplicial theory of effects will deepen our understanding of the fundamentals of quantum measurements. 
}

The structure of our paper is as follows: In Section \ref{sec:Motivation and preliminaries}, we introduce the motivating examples of effect algebras and effect algebroids from quantum foundations, along with the more recently defined simplicial distributions and measurements, and related nerve spaces such as the commutative nerve. Section \ref{sec:weak associativity} discusses three different versions of associativity of varying strength, where we also define the key category of weak partial monoids.
In Section \ref{sec:Nerves and associativity conditions}, we introduce a nerve construction and describe the essential images of our categories of interest, as depicted in the diagram above. Section \ref{sec:invertibility} focuses on invertibility considerations, culminating in the category of weakly associative partial groups. We show that commutative nerves are weakly associative partial groups. Finally, Section \ref{sec:Simplicial effects} presents the category of simplicial effects and gives a noteworthy example that is an object of this category but not an effect algebroid. This illustrates how our notion of simplicial effects captures new non-trivial simplicial structures exhibiting behavior analogous to effects.

\paragraph{Acknowledgments.}
This work is supported by the Air Force Office of Scientific Research (AFOSR) under award number FA9550-21-1-0002. The first author also acknowledges support from the Digital Horizon Europe project FoQaCiA, GA no. 101070558 and AFOSR FA9550-24-1-0257. 

\section{Motivation and preliminaries}
\label{sec:Motivation and preliminaries}

{Effect algebras and their generalization, effect algebroids, provide a formal way to capture quantum-theoretic notions of effects. Their principal role is to encode probability distributions stemming from quantum measurements. In the recent theory of simplicial distributions and measurements \cite{sdist}, new variants have emerged that exhibit similar behavior. In this section, we present our motivating constructions, which lead us toward the development of a unified theory of simplicial effects.}


\subsection{Effect algebras}\label{subsec:effectalg}

As effect algebras are, in particular, partial monoids, we 
{first recall}
the definition of a partial monoid.

\begin{definition}[\cite{Segal}]  \label{def:partial monoid}
{A {\it partially defined binary operation} on a set $M$ is a function $\cdot:M_2\to M$ from a subset $M_2\subset M\times M$. We say $a\cdot b$ is defined whenever $(a,b)\in M_2$.}
	 A \emph{partial monoid} $(M,\cdot,1)$ is a set $M$ equipped with a partially defined 
	 {binary operation on $M$}
	 such that 
	\begin{itemize}
		\item For $a,b,c\in M$, $a\cdot (b\cdot c)$ is defined if and only if $(a\cdot b)\cdot c$ is defined, and if both are defined they are equal.
		\item For every $a\in M$ both $1\cdot a$ and $a\cdot 1$ are defined and equal to $a$. 
	\end{itemize}
	A \emph{morphism of partial monoids} is a map $f:M\to N$ of sets such that if $a\cdot b$ is defined in {$M$}, so is $f(a)\cdot f(b)$ and $f(a\cdot b)=f(a)\cdot f(b)$. 
\end{definition}


\begin{definition}[\cite{foulis1994effect}]  \label{def:effect algebra}
	An \emph{effect algebra} $(E,+,0,\perp)$ is a partial monoid equipped with a map 
$(-)^\perp:E\to E$ called the \emph{orthocomplement}. 
{W}e define 
	{$1:=0^\perp$.}	
	These data must satisfy:
	\begin{enumerate}
		\item (Commutativity) If $(a,b)\in E_2$ then $(b,a)\in E_2$, and $a+b=b+a$.
		\item (Orthocomplement) For $a\in E$, $a^\perp$ is the unique element such that $(a,a^\perp)\in E_2$ and $a+a^\perp=1$. 
		\item (Zero-in-one) If $(a,1)\in E_2$, then $a=0$.
	\end{enumerate}   
	 A \emph{morphism of effect algebras} is a morphism $f:E\to F$ of partial monoids  which preserves the orthocomplement in the sense that $f(a^\perp)=f(a)^\perp$ for all $a\in E$. We will write $\Eff$ for the category of effect algebras.		
\end{definition}

\begin{example}\label{ex:example effect algebras}
{We now illustrate the notion of effect algebras with the following canonical examples.}
	\begin{enumerate}
		\item The interval $[0,1]\subset \mathbb{R}$ is an effect algebra when equipped with the partial operation $+$, the unit $0$, and the orthocomplement $a\mapsto 1-a$. 
		
		\item Any Boolean algebra $B$ is an effect algebra, with partial operation given by the sum of disjoint elements. The orthocomplement is precisely the complement. 
		
		\item {Let $\hH$ be a finite-dimensional Hilbert space. The prototypical example of effect algebras is the set of projection operators, denoted by $\Proj(\hH)$.  The orthocomplement is given by orthogonal complementation. The partial operation is given by addition when two projectors are orthogonal with the zero element given by the zero operator $\zero$. We have $\zero^\perp  =\one$, the identity operator.}
	\end{enumerate}	
	
\end{example}

\begin{definition}\label{def:state}
	An effect algebra morphism $\varphi:E\to [0,1]$ is called a \emph{state {on $E$}}. We will write $\St(E)$ for the set of states {on $E$}.
\end{definition}

A foundational result that characterizes states on the effect algebra of projectors is the Gleason's theorem \cite{gleason1975measures}:

\begin{theorem}\label{thm:Gleason}
	Let $\hH$ be a finite-dimensional Hilbert space of dimension $\geq 3$ and $\Den(\hH)$ denote the set of density operators (quantum states), i.e., trace $1$ positive operators. Then
	\[
	\begin{tikzcd}[row sep=0em]
		\Den(\hH) \arrow[r] & \St(\Proj(\hH)) \\
		\rho \arrow[r,mapsto] & (\Pi \mapsto \Tr(\rho\Pi))
	\end{tikzcd} 
	\]
	is a bijection.
\end{theorem}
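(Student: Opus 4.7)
The plan is to separate the statement into well-definedness, injectivity, and surjectivity, and to reduce surjectivity to the classical (vector-form) Gleason theorem.

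First, I would verify that the assignment $\rho \mapsto \varphi_\rho$ with $\varphi_\rho(\Pi) := \Tr(\rho\Pi)$ actually lands in $\St(\Proj(\hH))$. Since $\rho$ is positive of trace $1$, cyclicity of the trace and positivity of $\Pi$ give $\varphi_\rho(\Pi) \in [0,1]$ with $\varphi_\rho(\zero)=0$ and $\varphi_\rho(\one)=1$. If $\Pi_1$ and $\Pi_2$ are orthogonal projectors, then $\Pi_1+\Pi_2$ is again a projector and linearity of the trace yields $\varphi_\rho(\Pi_1+\Pi_2)=\varphi_\rho(\Pi_1)+\varphi_\rho(\Pi_2)$. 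Finally $\varphi_\rho(\Pi^\perp)=\Tr(\rho(\one-\Pi))=1-\varphi_\rho(\Pi)=\varphi_\rho(\Pi)^\perp$, so $\varphi_\rho$ is indeed an effect algebra morphism.

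Next, I would establish injectivity by a polarization argument. If $\Tr(\rho_1\Pi)=\Tr(\rho_2\Pi)$ for every rank-one projector $\Pi=|v\rangle\langle v|$, then $\langle v|(\rho_1-\rho_2)|v\rangle=0$ for every unit vector $v$; polarization then gives $\langle u|(\rho_1-\rho_2)|v\rangle=0$ for all $u,v$, so $\rho_1=\rho_2$. Thus the map is one-to-one.

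The crux is surjectivity, and this is where the classical Gleason theorem does the essential work. Given a state $\varphi \colon \Proj(\hH)\to[0,1]$, I would define $f \colon S(\hH) \to [0,1]$ on the unit sphere by $f(v):=\varphi(|v\rangle\langle v|)$. For any orthonormal basis $\{v_1,\dots,v_n\}$ of $\hH$, the projectors $|v_i\rangle\langle v_i|$ are pairwise orthogonal with sum $\one$, so repeated use of the partial-operation and orthocomplement axioms of Definition~\ref{def:effect algebra} yields
\[
\sum_{i=1}^n f(v_i) \;=\; \varphi\!\left(\sum_{i=1}^n |v_i\rangle\langle v_i|\right) \;=\; \varphi(\one) \;=\; 1.
\]
Hence $f$ is a (bounded, nonnegative) frame function. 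The classical Gleason theorem, valid precisely when $\dim\hH\geq 3$, provides a unique positive trace-$1$ operator $\rho\in\Den(\hH)$ with $f(v)=\langle v|\rho|v\rangle$ for every unit $v$. For a general projector $\Pi$, choose an orthonormal basis $\{v_j\}$ of its image; then $\Pi=\sum_j |v_j\rangle\langle v_j|$ is an orthogonal sum in $\Proj(\hH)$, and applying $\varphi$ on one side and $\Tr(\rho\,\cdot\,)$ on the other gives $\varphi(\Pi)=\sum_j f(v_j)=\Tr(\rho\Pi)=\varphi_\rho(\Pi)$. Thus $\varphi=\varphi_\rho$, proving surjectivity.

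The main obstacle, as in every presentation of this result, is the surjectivity step: reconstructing the operator $\rho$ from the set-theoretic data of $\varphi$ is exactly the content of classical Gleason, and it is where the dimension hypothesis $\dim\hH\geq 3$ is indispensable (the two-dimensional case admits non-quantum frame functions). I would therefore either cite \cite{gleason1975measures} directly for this step or, if a self-contained treatment is desired, outline the standard reduction to continuity of the frame function on the sphere via the three-dimensional case applied to real subspaces.
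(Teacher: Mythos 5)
Your proposal is correct, and it takes the same route the paper does: the paper gives no proof of Theorem~\ref{thm:Gleason} at all, simply citing \cite{gleason1975measures}, and the essential content of your argument (surjectivity) is likewise delegated to the classical frame-function form of Gleason's theorem. The surrounding bookkeeping you supply --- well-definedness of $\rho\mapsto\Tr(\rho\,\cdot)$ as an effect-algebra morphism, injectivity by polarization, and the reduction of a general projector to an orthogonal sum of rank-one projectors --- is standard and accurate.
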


\subsection{Simplicial sets}

One of our main aims is to fit effect algebras into the setting of \cite{sdist}, in which quantum measurements and distributions are studied using simplicial sets.  
We here collect some of the background on simplicial sets which we will use in the sequel.

\begin{definition}
	The \emph{simplex category} $\Delta$ is the 
	category whose objects are the standard finite non-empty ordinals 
	\[
	[n]:=\{0<1<\cdots<n\}
	\] 
	and whose morphisms are the weakly monotone maps of sets{, i.e., $f(i)< f(j)$ whenever $i< j$} . A \emph{simplicial set} is a functor $X:\Delta^\op \to \Set$, and the \emph{category of simplicial sets} is the functor category $\Set_\Delta:=\Fun(\Delta^\op,\Set)$. 
\end{definition}

	A simplicial set can be equivalently considered as a collection $\{X_n\}_{n\geq 0}$ of sets together with \emph{face maps} $d_i:X_n\to X_{n-1}$ and \emph{degeneracy maps} $s_i:X_{n}\to X_{n+1}$ satisfying certain relations called the \emph{simplicial identities} \cite[Ch 1, Eq. 1.3]{GoerssJardine}. These maps correspond, respectively, to the injective map $d^i:[n-1]\to[n]$ in $\Delta$ which skips $i$, and the surjective map $s^i:[n+1]\to [n]$ in $\Delta$ which doubles {the preimage of} $i$. The set $X_n$ is called the \emph{set of $n$-simplices} of $X$.

\begin{example}
	For every category $\scr{C}$, there is a simplicial set $N(\scr{C})$ called the nerve of $\scr{C}$ whose set of $n$-simplices $N(\scr{C})_n$ is the set of composable   $n$-tuples $(f_1,\ldots,f_n)$ of morphisms of $\scr{C}$. The face and degeneracy maps are given, respectively, by 
	\[
	d_i(f_1,\ldots,f_n)=\begin{cases}
		(f_2,\ldots,f_n) & i=0\\
		(f_1,\ldots, f_{n-1}) & i=n \\
		(f_1,\ldots, f_{i+1}\circ f_i,\ldots, f_n) & \text{else}
	\end{cases}
	\]
	and 
	\[
	s_i(f_1,\ldots,f_n)=(f_1,\ldots,\underbrace{\on{id}}_{i^{\on{th}}},\ldots, f_n)
	\]
	The functor which sends $\scr{C}$ to $N(\scr{C})$ is fully faithful, and so we can consider categories as simplicial sets. 
	
	Since monoids (and thus groups) can 
	be considered as categories with one object, we can take the nerve of a monoid as well which we will 
denote by $N(M)$.  
\end{example}

\begin{example}
	We denote the representable simplicial set $\Delta(-,[n])$ by $\Delta^n$, and call it the \emph{standard (combinatorial) $n$-simplex.}   
We denote by $\partial \Delta^n\subset \Delta^n$ {the \emph{boundary of the $n$-simplex},} the simplicial subset consisting of all simplices which factor through a proper subset of $[n]$.
\end{example}


{Let} $\Delta_{\leq k}\subset \Delta$ denote the full subcategory on the objects $[n]$ such that $n\leq k$. 
Composition with the inclusion $\iota_k:\Delta_{\leq k}^\op \to \Delta^\op$ induces a functor 
\[
\begin{tikzcd}
	\on{tr}_k:&[-3em]\Set_\Delta\arrow[r] & \Set_{\Delta_{\leq k}}
\end{tikzcd}
\]
called the \emph{$k$-truncation functor}. By general abstract nonsense, this admits a right adjoint called \emph{$k$\textsuperscript{th} coskeleton functor} and written $\on{cosk}_k$. We call a simplicial set in the image of this functor \emph{$k$-coskeletal}. 

\begin{proposition}\
	Let $X\in \Set_\Delta$ be a simplicial set. 
	\begin{enumerate}
		\item If $X$ is isomorphic to the nerve of a category, then $X$ is 2-coskeletal. 
		\item The simplicial set $X$ is $k$-coskeletal if and only if, for every $n>k$, any extension problem 
		\[
		\begin{tikzcd}[column sep=huge,row sep=large]
			\partial \Delta^n\arrow[r]\arrow[d,hookrightarrow] & X \\
			\Delta^n \arrow[ru,dashed]
		\end{tikzcd}
		\]
		has a \emph{unique} solution, {i.e., there exists $\Delta^n\to X$ making the diagram commute.}
	\end{enumerate}
\end{proposition}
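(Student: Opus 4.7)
My plan is to handle the two claims in reverse order, since part (1) follows easily from part (2) once established.

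For part (2), the approach is to unwind the truncation-coskeleton adjunction $\on{tr}_k \dashv \on{cosk}_k$, which by Yoneda gives
\[
(\on{cosk}_k \on{tr}_k X)_n = \Hom_{\Set_\Delta}(\Delta^n, \on{cosk}_k \on{tr}_k X) \cong \Hom_{\Set_{\Delta_{\leq k}}}(\on{tr}_k \Delta^n, \on{tr}_k X).
\]
The key combinatorial observation is that for $n > k$ one has $\on{tr}_k \Delta^n = \on{tr}_k \partial\Delta^n$, since both consist of all simplices of $\Delta^n$ of dimension $\leq k$ (using $n - 1 \geq k$). Therefore, if $X$ is $k$-coskeletal (so $X \cong \on{cosk}_k \on{tr}_k X$), then for every $n > k$,
\[
X_n \cong \Hom(\on{tr}_k \partial\Delta^n, \on{tr}_k X) \cong \Hom(\partial\Delta^n, X),
\]
which is exactly the unique-extension property. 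For the converse, set $Y := \on{cosk}_k \on{tr}_k X$; it is $k$-coskeletal by construction and hence satisfies unique extension by the forward direction. The unit $\eta : X \to Y$ is a bijection in degrees $\leq k$ by construction, and by induction on $n > k$ one sees it is also a bijection in degree $n$: both $X_n$ and $Y_n$ are in bijection with $\Hom(\partial\Delta^n, X)$ and $\Hom(\partial\Delta^n, Y)$ respectively, and these agree because the inductive hypothesis identifies the restrictions of $X$ and $Y$ to simplices of dimension $< n$.

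For part (1), I would apply part (2) with $k = 2$ to $X = N(\scr{C})$. Fix $n > 2$ and a map $\varphi : \partial\Delta^n \to N(\scr{C})$, and set $f_i := \varphi([i{-}1, i])$ for $1 \leq i \leq n$; these morphisms are composable because consecutive edges in $\Delta^n$ share endpoints, so $(f_1, \ldots, f_n)$ defines an $n$-simplex $\sigma \in N(\scr{C})_n$. To verify that $\sigma$ extends $\varphi$, one needs the identity
\[
\varphi([i, j]) = f_j \circ f_{j-1} \circ \cdots \circ f_{i+1} \qquad \text{for all } 0 \leq i < j \leq n,
\]
which follows by induction on $j - i$ using the $2$-face $[i, i+1, j] \subset \partial\Delta^n$ (present because $n > 2$) to split the composite into $(f_{i+1})$ followed by the shorter composite along $[i+1, j]$. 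Uniqueness is clear: any $n$-simplex of a nerve is determined by its sequence of consecutive edges, and these are prescribed by $\varphi$.

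The main obstacle I foresee is threading the induction in the converse direction of part (2), so that bijectivity at each level is deduced cleanly from bijectivity at lower levels without circularity. The forward direction of (2) and all of part (1) reduce to unwinding of the adjunction together with elementary simplicial combinatorics.
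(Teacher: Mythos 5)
Your proof is correct, and it is essentially the standard argument: the paper itself offers no proof of this proposition, simply citing Duskin and \cite[Tag 051Z]{kerodon} for these well-known facts. Your treatment of part (2) via the adjunction $\on{tr}_k\dashv\on{cosk}_k$, the observation that $\on{tr}_k\Delta^n=\on{tr}_k\partial\Delta^n$ for $n>k$, and the skeletal induction on the unit $X\to\on{cosk}_k\on{tr}_k X$, together with the spine/composite induction for part (1), is exactly the argument found in those references, so nothing further is needed.
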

\begin{proof}
	Both statements are well-known; {see \cite{Duskin} and \cite[{Tag 051Z}]{kerodon}.} 
\end{proof}

\subsection{Commutative nerves}

	Our  main
	 class of motivating examples for weakly associative partial groups, to be defined in Section \ref{sec:invertibility}, come directly from groups. 
	 These are the commutative nerves and $d$-torsion commutative nerves of groups introduced in \cite{AdemCohenTorres,okay2021commutative}.  

	\begin{definition}
		Let $G$ be a group. Define the \emph{commutative nerve} of $G$ to be the {simplicial} subset 
{$N(\ZZ,G)\subset NG$}		
		given by
		\[
		N(\ZZ,G)_n=\{(g_1,\ldots,g_n)\mid [g_i,g_j]=1,\;\; \forall 1\leq i,j\leq n\}.
		\]
		It is not hard to see that the simplicial identities respect this commutativity condition, and so this defines a simplicial subset. Note that $N(\ZZ,-)$ is represented by the cosimplicial group $[n]\mapsto \ZZ^{\times n}$. 
		
		A variant of this construction is the \emph{$d$-torsion commutative nerve} for $d\geq 2$. This is the subset $N(\ZZ_{/d},G)\subset N(\ZZ,G)$ given by 
		\[
		N(\ZZ_{/d},G)_n=\{(g_1,\ldots,g_n)\in N(\ZZ,G)_n\mid g_i^d=1,\; \forall 1\leq i\leq n\}.
		\]
		Note that $N(\ZZ_{/d},-)$ is represented by the cosimplicial group $\ZZ_{/d}^{\times n}$. 
	\end{definition}

	\begin{lemma}\label{lem:comm_nerve_2cosk}
		For any group $G$ and any $d\geq 2$, the simplicial sets $N(\ZZ_{/d},G)$ and $N(\ZZ,G)$ are 2-coskeletal. 
	\end{lemma}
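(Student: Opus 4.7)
The plan is to bootstrap from the well-known 2-coskeletality of $NG$. Since $NG$ is the nerve of a category, the preceding proposition already gives that $NG$ is 2-coskeletal. Given any $p \colon \partial\Delta^n \to N(\mathbb{Z},G)$ with $n>2$, composing with the inclusion $N(\mathbb{Z},G)\hookrightarrow NG$ and invoking 2-coskeletality of $NG$ yields a unique extension $\bar p \colon \Delta^n \to NG$, corresponding to an $n$-tuple $(g_1,\ldots,g_n)\in G^{\times n}$. Uniqueness of the extension inside the subset $N(\mathbb{Z},G)$ is automatic from uniqueness in $NG$, so the task reduces to verifying that $(g_1,\ldots,g_n)$ already satisfies the pairwise commutativity condition — and, in the torsion case, the additional condition $g_i^d = 1$.

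For the commutativity step I would exploit three specific faces of $\bar p$. The faces $d_0\bar p = (g_2,\ldots,g_n)$ and $d_n\bar p = (g_1,\ldots,g_{n-1})$ are pullbacks of $p$ along face inclusions, so both lie in $N(\mathbb{Z},G)_{n-1}$ by hypothesis. The pairwise commutativity they impose covers every pair $(g_i,g_j)$ except the extreme pair $(g_1,g_n)$. To catch that pair I use
\[
d_1\bar p = (g_2 g_1, g_3, \ldots, g_n) \in N(\mathbb{Z},G)_{n-1},
\]
which forces $g_2 g_1$ to commute with $g_n$; combining with the already-known relation $g_2 g_n = g_n g_2$ from $d_0\bar p$ gives $g_2 g_1 g_n = g_n g_2 g_1 = g_2 g_n g_1$, and cancelling $g_2$ yields $g_1 g_n = g_n g_1$. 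For $N(\mathbb{Z}_{/d},G)$ the same argument handles commutativity, and the torsion condition $g_i^d = 1$ follows by restricting $\bar p$ to the edge from vertex $i-1$ to vertex $i$; this edge sits inside $\partial\Delta^n$ (using $n\geq 2$) and so maps into $N(\mathbb{Z}_{/d},G)_1 = \{g\in G \mid g^d = 1\}$.

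I do not anticipate a real obstacle. The only subtle point is choosing the right third face so that the single pair $(g_1,g_n)$ not immediately visible in $d_0\bar p$ or $d_n\bar p$ is still controlled by the boundary; $d_1\bar p$ is the minimal such choice and produces the needed relation in one line. Everything else is bookkeeping on top of 2-coskeletality of $NG$ and the stability of the defining relations of $N(\mathbb{Z},G)$ and $N(\mathbb{Z}_{/d},G)$ under face maps.
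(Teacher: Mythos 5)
Your proof is correct and follows essentially the same strategy as the paper's: the heart of both arguments is the observation that $[g_a g_b, g_c]=1$ together with $[g_b,g_c]=1$ forces $[g_a,g_c]=1$, harvested from faces of the given boundary. The paper verifies this explicitly for $n=3$ and then reduces $n>3$ to $3$-dimensional faces of $\partial\Delta^n$, whereas you treat all $n$ uniformly via the faces $d_0$, $d_1$, $d_n$ and also spell out the uniqueness step (which the paper leaves implicit via $2$-coskeletality of $NG$); both routes are fine.
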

	
	\begin{proof}
		We provide the proof for $N(\ZZ,G)$, the proof for $N(\ZZ_{/d},G)$ is identical. 
		
		Suppose given the boundary of a $3$-simplex in $N(\ZZ,G)$, with spine $(g_1,g_2,g_3)$. It will suffice to show that the elements $g_i$ pairwise commute. By hypothesis,  we also have $[g_1g_2,g_3]=1$. Thus, 
		\[
		g_3(g_1g_2)= (g_1g_2)g_3
		\]
		since $g_2$ and $g_3$ also commute by hypothesis, the right-hand side is equal to  
		\[
		g_1g_3g_2.
		\]
		Multiplying on the left by $g_2^{-1}$ shows that $g_3g_1=g_1g_3$, as desired. 
		
		Now suppose given an $n$-simplex boundary in $N(\ZZ_{/d},G)$ for $n>3$, with spine $(g_1,\ldots, g_n)$. Any $g_i$ and $g_j$ are contained in at least one 3-simplex, and thus commute, completing the proof. 
	\end{proof}

{In this work, we are mainly interested in the case where $G$ is the unitary group $U(\hH)$ acting on a finite-dimensional Hilbert space. Then these simplicial sets can be seen as simplicial measurements in the sense of \cite{sdist}.}

\subsection{{Simplicial distributions and measurements}}

{Quantum distributions and more generally non-signaling distributions \cite{barrett2005nonlocal} can be studied using the simplicial framework introduced in \cite{sdist}. The main focus of this investigation is to study Bell's non-locality, and its generalization quantum contextuality. Quantum distributions can be included in this picture as distributions coming from simplicial measurements.}

\begin{definition}[\cite{foulis1994effect}]\label{def:multiplicable effect}
For a partial monoid \( M \), let \( M_n \subset M^{\times n} \) be \emph{the set of \( n \)-multiplicable elements}, {defined recursively as follows:  
\begin{itemize}
\item Every element of $M$ is $1$-multiplicable.

\item An $n$-tuple $(a_1,\cdots,a_n)$ is $n$-multiplicable if $(a_1,\cdots,a_{n-1})$ is $(n-1)$-multiplicable and the pair $(a_1+\cdots+a_{n-1},a_n)$ is $2$-multiplicable.
\end{itemize}} 
\noindent
For an effect algebra $E$, a subset $S\subset E$ with $|S|=n$ will be called \emph{multiplicable} if the \( n \)-tuple of elements obtained by any ordering of the elements of \( S \) belongs to \( E_n \). Note that this property holds for all orderings or none. 
\end{definition}

{A more general version of this definition  will appear in Definition \ref{def:multiplicable effect}.}


\begin{definition}\label{def:EX}
	For {an effect algebra $E$ and} a set $S$, define 
	\[	
	E(S):= \{ \phi:S \to E\;\mid\; \phi\; \on{fin. supp.},\; \set{\phi(s)}_{s\in S} \text{  multiplicable, } \sum_{s\in S}\phi(s)=1\}.
	\]
	Note that, because of the finite support condition, this yields a functor 
	\[
	\begin{tikzcd}
		E:&[-3em] \Set \arrow[r] & \Set 
	\end{tikzcd}
	\]
	with $E$ acting on maps of sets by taking the sum over fibres. As such, we can compose simplicial sets $X:\Delta^\op \to \Set$ with $E$, yielding a functor which we abusively also denote by 
	\[
	\begin{tikzcd}
		{E}: &[-3em] \Set_\Delta \arrow[r] & \Set_\Delta. 
	\end{tikzcd}
	\] 
Given an effect algebra $E$ and a simplicial set $X$, the simplicial set $E(X)$ is defined to be the composition $E\circ X$.
\end{definition}


Main examples come from the theory of simplicial distributions and measurements introduced in \cite{sdist}.
 
\begin{example}\label{ex:projector}
{Recall} the effect algebras $[0,1]$ and $\Proj(\hH)$ introduced in Example \ref{ex:example effect algebras}. 
\begin{enumerate}
\item For $E=[0,1]$, we will write $D$ for the resulting functor. Given a simplicial set \(X\), the \(n\)-simplices of \(D(X)\) are exactly the probability distributions with finite support on \(X_n\). Concretely,
\[
D(X)_n \;:=\; \{\,p : X_n \to [0,1] \,|\,
p \text{ fin. supp., } \sum_{x \in X_n} p(x) \;=\; 1\}.
\]
This simplicial set comes with a canonical map $\delta:X\to D(X)$ that sends a simplex to the delta-distribution peaked at that simplex.

\item When $E=\Proj(\hH)$ we will write $P_\hH$
 for the functor associated with \( E=\Proj(\mathcal{H}) \). 
This time the resulting simplicial set $P_\hH(Y)$ has $n$-simplices given by projective measurements with outcome set $X_n$. 
More explicitly, we have
\[
P_{\scr{H}}(X)_n:=\{\Pi:X_n\to \on{Proj}(\scr{H})\mid P \text{ fin. supp., }\sum_{x\in X_n} \Pi(x)=\mathbb{1}\}.
\]
Similarly, $\delta:X\to P_\hH(X)$ sends a simplex to the projective measurement taking value $\one$ at that simplex.
\end{enumerate}
\end{example}

In the theory of simplicial distributions, measurements and outcomes are represented by simplicial sets.  
\begin{itemize}
\item
A simplicial distribution on the pair consisting of a measurement space $X$ and an outcome space $Y$ is a simplicial map
\[
	\begin{tikzcd}
		X \arrow[r,"p"] & D(Y).
	\end{tikzcd}
\] 

\item 
A simplicial measurement is a simplicial map 
\[
	\begin{tikzcd}
		X \arrow[r,"\Pi"] & P_\hH(Y).
	\end{tikzcd}
\] 
\end{itemize}
Any density operator induces a natural transformation
\[
	\begin{tikzcd}
		\rho_*: &[-3em] P_\hH \arrow[r] & D
	\end{tikzcd}
	\]
defined by a simplicial map $(\rho_*)_Y:P_\hH(Y)\to D(Y)$ that sends a projective measurement $\Pi:Y_n\to \Proj(\hH)$ to the probability distribution $p:Y_n\to [0,1]$ given by the trace formula $p(y)=\Tr(\rho \Pi(y))$. Thus, a simplicial measurement can be turned into a simplicial distribution using the composition:
\[
	\begin{tikzcd}
	p: X \arrow[r,"\Pi"] & P_\hH(Y) \arrow[r,"\rho_*"] &  D(Y).
	\end{tikzcd}
	\]
This way quantum distributions can be studied within the framework of simplicial distributions.	

We can also reformulate Gleason's theorem (Theorem \ref{thm:Gleason}) using this simplicial language.
Let $S^1$ denote the simplicial circle defined to be the quotient $\Delta^1/\partial \Delta^1$. Then for any finite-dimensional Hilbert space of dimension $\geq 3$ the simplicial set maps  $p$ making the diagram commute
\[
\begin{tikzcd}[column sep=huge,row sep=large]
S^1 \arrow[d,"\delta"'] \arrow[rd,"\delta"] & \\
P_\hH(S^1) \arrow[r,"p"] & D(S^1)
\end{tikzcd}
\]
are of the form $p=\rho_*$ for some density operator $\rho$. See \cite[Theorem 7.1]{sdist} for details.


Another important relationship is between commutative nerve spaces and simplicial measurements; \cite[Proposition 6.3]{sdist}.

\begin{proposition}\label{prop:U(H)_P(H)}
	The spectral decomposition induces an isomorphism of simplicial sets 
	\[
	\begin{tikzcd}
		N(\ZZ_{/d},U(\scr{H}))\arrow[r,"\cong"] & P_{\scr{H}}(N(\ZZ_{/d})).
	\end{tikzcd}
	\] 
\end{proposition}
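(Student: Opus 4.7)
The plan is to construct the isomorphism $\Phi: N(\ZZ_{/d}, U(\hH)) \to P_\hH(N(\ZZ_{/d}))$ levelwise via spectral decomposition and exhibit an explicit inverse. In degree $n$, given a pairwise-commuting tuple $(g_1,\ldots,g_n)$ of unitaries with $g_i^d = \one$, simultaneous diagonalizability yields an orthogonal decomposition $\hH = \bigoplus_{\mathbf{k} \in \ZZ_{/d}^{\times n}} \hH_{\mathbf{k}}$ into common eigenspaces, where each $g_i$ acts on $\hH_{\mathbf{k}}$ by $\omega^{k_i}$ with $\omega = e^{2\pi i/d}$. I would set $\Phi_n(g_1,\ldots,g_n)$ to be the assignment $\mathbf{k} \mapsto \Pi(\mathbf{k})$, where $\Pi(\mathbf{k})$ is the projector onto $\hH_{\mathbf{k}}$. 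Since only finitely many $\hH_{\mathbf{k}}$ are nonzero and $\sum_{\mathbf{k}} \Pi(\mathbf{k}) = \one$, this is a valid $n$-simplex of $P_\hH(N(\ZZ_{/d}))$.

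For the inverse $\Psi_n$, given a projective measurement $\Pi:\ZZ_{/d}^{\times n} \to \Proj(\hH)$, I would define
\[
g_i \;:=\; \sum_{\mathbf{k} \in \ZZ_{/d}^{\times n}} \omega^{k_i}\, \Pi(\mathbf{k}), \qquad i = 1,\ldots,n.
\]
Each $g_i$ is a linear combination of pairwise orthogonal projectors with unit-modulus coefficients, hence unitary; moreover $g_i^d = \sum_{\mathbf{k}} \omega^{d k_i} \Pi(\mathbf{k}) = \sum_\mathbf{k} \Pi(\mathbf{k}) = \one$, and the $g_i$ all lie in the commutative algebra generated by $\{\Pi(\mathbf{k})\}$, so they pairwise commute. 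That $\Phi_n$ and $\Psi_n$ are mutually inverse follows from the uniqueness of the spectral decomposition: the $\Pi(\mathbf{k})$ recovered from $\Psi_n(\Pi)$ are exactly the common eigenspace projectors, and conversely reconstructing $g_i$ from its spectral data recovers it exactly.

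It remains to check compatibility with the simplicial structure maps, which is where the real content lies. The outer face maps $d_0, d_n$ correspond on the $P_\hH$ side to marginalizing over the first or last coordinate of $\ZZ_{/d}^{\times n}$, matching the spectral decomposition of the truncated tuple $(g_2,\ldots,g_n)$ or $(g_1,\ldots,g_{n-1})$. The degeneracy $s_j$, which inserts $\one$ into the tuple, matches the pushforward along $s_j:\ZZ_{/d}^{\times n}\to \ZZ_{/d}^{\times(n+1)}$ because $\one$ has only the eigenvalue $\omega^0$. The inner face identity is the crucial one: I would verify that if $g_i = \sum_k \omega^k \Pi_i^k$ and $g_{i+1} = \sum_l \omega^l \Pi_{i+1}^l$ are the individual spectral resolutions, then
\[
g_i g_{i+1} \;=\; \sum_m \omega^m \Bigl(\sum_{k+l = m}\Pi_i^k \Pi_{i+1}^l \Bigr)
\]
is the spectral resolution of the product, so the common eigenspace of $g_i g_{i+1}$ with eigenvalue $\omega^m$ is the sum over $k+l=m$ of the common eigenspaces of $(g_i,g_{i+1})$. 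This is precisely the pushforward rule along the addition map $\ZZ_{/d}^{\times n}\to \ZZ_{/d}^{\times(n-1)}$ underlying $d_i$ on $N(\ZZ_{/d})$.

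The main obstacle is therefore purely bookkeeping around the inner face maps: making sure that the group law of $\ZZ_{/d}$ (which governs $d_i$ on $N(\ZZ_{/d})$) is consistent with multiplication of eigenvalues in $U(\hH)$ under the identification $k \leftrightarrow \omega^k$. Once this compatibility is established, the remaining simplicial identities fall out mechanically from the uniqueness of the spectral decomposition.
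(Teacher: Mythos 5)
Your proof is correct and is exactly the intended argument: the paper itself gives no proof (it cites \cite[Proposition 6.3]{sdist}), but the statement already names the spectral decomposition as the mechanism, and your levelwise construction via joint eigenspace projectors, the explicit inverse $g_i=\sum_{\mathbf{k}}\omega^{k_i}\Pi(\mathbf{k})$, and the check that the inner face maps correspond to grouping eigenvalues by their product is precisely how that reference proceeds. The only cosmetic gloss is that the inner-face computation should be phrased for the full joint decomposition of the whole tuple rather than just the pair $(g_i,g_{i+1})$, but this follows immediately since the joint projectors refine the pairwise ones.
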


\subsection{{Effect algebroids and cyclic sets}}

An effect algebroid \cite{Roumen} is a straightforward multi-object generalization of an effect algebra.

\begin{definition}\label{def:effect algebroid}
	An \emph{effect algebroid} $E$ consists of 
	\begin{itemize}
		\item A set $E_0$ of points. 
		\item A set $E_1$ of arrows.
		\item Maps $s,t:E_1\to E_0$. For $x,y\in E_0$, we denote by 
		\[
		\Hom(x,y):=\{x\}\times^{s}_{E_0}E_1\times^{t}_{E_0} \{y\}. 
		\] 
		\item A map $u:E_0\to E_1$ which sends $x$ to $0_x$, such that $s\circ u=\on{id}=t\circ u$.
		\item Partial functions 
		\[
		\begin{tikzcd}
			\circ: &[-3em]\Hom(y,z)\times \Hom(x,y)\arrow[r] & \Hom(x,z) 
		\end{tikzcd}
		\]
		\item Involutions 
		\[
		\begin{tikzcd}
			(-)^\perp: &[-3em] \Hom(x,y) \arrow[r] & \Hom(y,x) 
		\end{tikzcd} 
		\]
		We write $1_x:=0_x^\perp$. 
	\end{itemize}
	These data must satisfy:
	\begin{enumerate}
		\item The composition is associative in that $f\circ (g\circ h)$ exists if and only if $(f\circ g)\circ h$ does, and if they exist both are equal. 
		\item The elements $0_x$ are units for composition in that, for $f\in \Hom(x,y)$, $f\circ 0_x$ and $0_y\circ f$ are defined, and both equal $f$. 
		\item (Orthocomplement) The following three conditions on $f\in \Hom(x,y)$ and $g\in \Hom(y,x)$ are equivalent. 
		\begin{itemize}
			\item $f\circ g=1_y$. 
			\item $f=g^{\perp}$.
			\item $g=f^\perp$. 
		\end{itemize}
		\item (Zero-in-one) If $f\circ 1_x$ is defined, $f=0_x$. If $1_y\circ f$ is defined, then $f=0_y$. 
	\end{enumerate}
A \emph{morphism of effect algebroids} $f \colon E \to F$ consists of:
\begin{enumerate}
  \item a function $f \colon E_0 \to F_0$, and
  \item functions $\Hom(x,y) \to \Hom\bigl(f(x), f(y)\bigr)$
\end{enumerate}
such that $f$ preserves $0_x$, $1_x$, and complements, and also satisfies the following condition:
whenever $a \cdot b$ is defined in $E$, then $f(a) \cdot f(b)$ is defined in $F$, and in that case
\[
  f(a \cdot b) \;=\; f(a) \cdot f(b).
\]
We will denote the category of effect algebroids by $\sf{EffAlgd}$.
\end{definition}

{As noted by Roumen, effect algebroids and, in particular, effect algebras, are most naturally studied as cyclic sets, thereby creating a connection to the simplicial framework we are pursuing.}

\begin{definition}
	The \emph{cyclic category} $\Lambda$ has as its objects $\langle n\rangle$ for $n\geq 0$. A morphism $f:\langle n\rangle \to \langle m\rangle$ is an equivalence class of monotone maps of sets
	\[
	\begin{tikzcd}
		f:&[-3em] \ZZ\arrow[r] & \ZZ 
	\end{tikzcd}
	\] 
	such that $f(i+n+1)=f(i)+m+1$ under the equivalence relation that $f\sim g$ when there exists $c\in \ZZ$ such that $f-g=c(m+1)$.  A \emph{cyclic set} is a functor $X: \Lambda^\op \to \Set$, and we denote the category of cyclic sets by $\Set_\Lambda$.  
\end{definition}

{
\begin{example}
We will denote the representable cyclic set $\Lambda(-,\langle n \rangle)$ by $\Lambda_n$.
\end{example}
}

There is a canonical functor 
\[
\begin{tikzcd}
	\Delta\arrow[r] & \Lambda 
\end{tikzcd}
\]
which is bijective on objects and faithful. This functor sends $f:[n]\to [m]$ to (the equivalence class of) 
\[
\tilde{f}(i)= r+ q(m+1) 
\]
where $f(i)=r+q(n+1)$ for $0\leq r\leq n$. Restriction along this functor yields a functor 
\[
\begin{tikzcd}
	\Set_\Lambda \arrow[r] & \Set_\Delta. 
\end{tikzcd}
\]
We will often neglect this functor in notation, and instead speak of the \emph{underlying simplicial set} of a cyclic set $X$, effectively treating $\Delta$ as a subcategory of $\Lambda$.

	The cyclic category may instead by presented by generators and relations. The generators are the face and degeneracy maps of the simplex category,
	\[
	\begin{tikzcd}
		\delta_i: &[-3em] \langle n-1\rangle \arrow[r] & \langle n\rangle 
	\end{tikzcd}
	\]
	and
	\[
	\begin{tikzcd}
		\sigma_i: &[-3em] \langle n+1\rangle \arrow[r] & \langle n\rangle 
	\end{tikzcd}
	\]
	for $0\leq i\leq n$, together with the generating automorphisms
	\[
	\begin{tikzcd}
		\tau_n: &[-3em] \langle n\rangle \arrow[r] & \langle n\rangle 
	\end{tikzcd}
	\] 
	for $n\geq 1$. The relations these must satisfy are the simplicial relations among the $\delta_i$'s and $\sigma_j$'s, together with the relations
	\[
	\begin{aligned}
		\tau_n\circ \delta_i&=\begin{cases}
			\delta_{i-1}\circ \tau_{n-1} & 1\leq i\leq n\\
			\delta_n & i=0
		\end{cases} \\
		\tau_n\circ \sigma_i &= \begin{cases}
			\sigma_{i-1}\circ \tau_{n+1} & 1\leq i \leq n\\
			\sigma_n\tau^2 & i=0 
		\end{cases}\\
		\tau_n^{n+1}&= \on{id}_{\langle n\rangle }.
	\end{aligned}
	\]
	See \cite[Ch. 6]{Loday} for details.

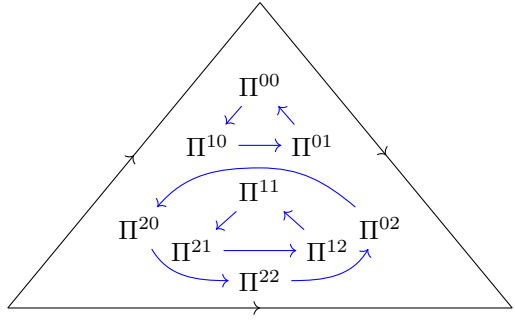
\begin{figure}
	\begin{center}
		\begin{tikzpicture}[decoration={
				markings,
				mark=at position 0.5 with {\arrow{>}}}]
			\path (-25:3.7) node (A2) {}; 
			\path (90:2.5) node (A1) {};
			\path (205:3.7) node (A0) {};
			\draw[postaction={decorate}] (A0.center) to (A1.center);
			\draw[postaction=decorate] (A1.center) to (A2.center);
			\draw[postaction={decorate}] (A0.center) to (A2.center);
			\path (0,1.4) node (B00) {\small{$\Pi^{00}$}};
			\path (0,0) node (B11) {\small{$\Pi^{11}$}};
			\path (0,-1.2) node (B22) {\small{$\Pi^{22}$}};
			\path (-0.7,0.6) node (B10) {\small{$\Pi^{10}$}};
			\path (0.7,0.6) node (B01) {\small{$\Pi^{01}$}};
			\path (-0.9,-0.8) node (B21) {\small{$\Pi^{21}$}};
			\path (0.9,-0.8) node (B12) {\small{$\Pi^{12}$}};
			\path (-1.6,-0.5) node (B20) {\small{$\Pi^{20}$}};
			\path (1.6,-0.5) node (B02) {\small{$\Pi^{02}$}};
			\begin{scope}[blue]
				\draw[->] (B00) to (B10); 
				\draw[->] (B10) to (B01); 
				\draw[->] (B01) to (B00);
				\draw[->] (B11) to (B21); 
				\draw[->] (B21) to (B12); 
				\draw[->] (B12) to (B11);
				\draw[->] (B22) to[out=0,in=-120] (B02); 
				\draw (B02) to[out=140,in=0] (0,0.3);
				\draw[->] (0,0.3) to[out=180,in=50] (B20); 
				\draw[->] (B20) to[out=-60,in=180] (B22);
			\end{scope}
		\end{tikzpicture}
	\end{center}
	\caption{The action of the cyclic automorphism $\tau_2$ determined by the central element $0\in \ZZ_{/3}$ on 2-simplices of $P_{\scr{H}}(N(\ZZ_{/3}))$.} \label{fig:cyclic-structure}
\end{figure}

\begin{example} The main examples of interest for us are the commutative nerves and simplicial measurements with outcome space $N\ZZ_{/d}$.
{It is also worth noting that  on $2$-coskeletal simplicial sets, such as nerves of categories and commutative nerves, a cyclic structure   is uniquely determined by the automorphisms $\tau_1$ and $\tau_2$.}
\begin{enumerate}   
\item 	Of particular interest to us, nerves of groups always carry cyclic structures, in the following manner. Let $G$ be a group, and $z\in Z(G)$ be a central element of $G$. Define maps 
\[
\begin{tikzcd}[row sep=0em]
	\tau_n: &[-3em] N(G)_n \arrow[r] & N(G)_n \\
	& (g_1,\ldots,g_n) \arrow[r,mapsto] & (z(g_1\cdots g_n)^{-1}, g_1,\ldots, g_{n-1}) 
\end{tikzcd}
\]
We can compute 
\[
\tau_n^2(g_1,\ldots,g_n)=(zg_n z^{-1},z(g_1\cdots g_n)^{-1},\ldots,g_{n-1})
\]
and, iterating, 
\[
\tau_n^{n+1}(g_1,\ldots,g_n)=(zg_1z^{-1},\ldots,zg_nz^{-1})=(g_1,\ldots,g_n),
\]
where the final equality follows from the fact that $z$ is central. Indeed, on can verify all of the cyclic identities, and show that this puts a cyclic structure on $N(G)$ (see \cite[\S 7.3.3]{Loday} for further details).

\item Notice that, since $z\in G$ is central, the same construction defines a cyclic structure on $N(\ZZ,G)$. Similarly, if $z$ is a $d$-torsion element, then the construction above defines a cyclic structure on $N(\ZZ_{/d},G)$.
{When $G=U(\hH)$ we endow $N(\ZZ_d,U(\hH))$ with the cyclic structure associated to the central element $z=\omega\one$ where  $\omega=e^{2\pi i/d}$.} 

\item  
We obtain two distinct cyclic structures on {$N(\ZZ_{/{2}})$} corresponding to the central elements $0$ and $1$. The automorphisms $\tau_1$ and $\tau_2$ associated to these are:
	\begin{itemize}
		\item For the central element $0\in \ZZ_{/2}$, 
		\[
		\begin{aligned}
			\tau_1(k) & = -k\\
			\tau_2(k_1,k_2) & = (-k_2-k_1,k_1).
		\end{aligned}
		\]
		\item For the central element $1\in \ZZ_{/2}$, 
		\[
		\begin{aligned}
			\tau_1(k) & = 1-k\\
			\tau_2(k_1,k_2) & = (1-k_2-k_1,k_1).
		\end{aligned}
		\]
	\end{itemize} 
  Since $P_{\scr{H}}$ is a functor, composing $P_{\scr{H}}$ with a cyclic set $X:\Lambda^\op\to \Set$ yields a cyclic set. As such, each central element $z\in Z(G)$ determines a cyclic structure on $P_{\scr{H}}(N(G))$. 
	The two cyclic structures on $N(\ZZ_{/2})$ define two cyclic structures on $P_{\scr{H}}(N(\ZZ_{/2}))$. These can be explicitly written as follows, where, for a simplex $\Pi:\ZZ_{/2}^n\to \on{Proj}(\scr{H})$, we write $\Pi^{a_1,\ldots, a_n}$ for the value of $\Pi$ on $(a_1,\ldots,a_n)$. 
	\begin{itemize}
		\item For the central element $z=0$, the first two levels of the cyclic structure on $P_{\scr{H}}(N(\ZZ_{/2}))$ are given by 
		\[
		\tau_1(\Pi^0,\Pi^1)=(\Pi^0,\Pi^1)
		\]
		and 
		\[
		\tau_2(\Pi^{00},\Pi^{01},\Pi^{10},\Pi^{11})=(\Pi^{00},\Pi^{11},\Pi^{01},\Pi^{10}). 
		\]
		\item For the central element {$z=1$}, the first two levels of the cyclic structure on $P_{\scr{H}}(N(\ZZ_{/2}))$ are given by 
		\[
		\tau_1(\Pi^0,\Pi^1)=(\Pi^1,\Pi^0)
		\]
		and 
		\[
		\tau_2(\Pi^{00},\Pi^{01},\Pi^{10},\Pi^{11})=(\Pi^{01},\Pi^{10},\Pi^{00},\Pi^{11}). 
		\]
	\end{itemize}
{The latter choice coincides with the cyclic structure on $N(\ZZ_{/2},U(\hH))$ under the isomorphism in Proposition \ref{prop:U(H)_P(H)}.} {Figure \ref{fig:cyclic-structure} represents the cyclic structure of $P_\hH N\ZZ_{/3}$ in degree $2$. We will need it in the construction of our key example; see Construction \ref{const:key example}.}	
\end{enumerate}
\end{example}

{In his thesis \cite{Roumen}, Roumen defines a functor}
\[
\begin{tikzcd}
C: \sf{EffAlgd} \arrow[r] & \sf{Set}_\Lambda
\end{tikzcd}
\]
by sending an effect algebroid $E$ to the cyclic set where $C(E)_n=\sf{EffAlgd}(\Lambda_n,E)$.

\begin{theorem}\label{thm:characterization fo EAd}
The functor $C$ is fully faithful and its essential image consists of those cyclic sets $X$ satisfying the $2$-Segal condition and which sends (U) to a sub-pullback and (Z) to a pullback: 
$$
(U):\begin{tikzcd}[column sep=huge,row sep=large]
{[0]} \arrow[r,"{d^0}"] \arrow[d,"{d^1}"] & {[1]} \arrow[d,"{d^2}"] \\
{[1]} \arrow[r,"{d^0}"] & {[2]}
\end{tikzcd}
\;\;\;\;
\;\;\;\;
\;\;\;\;
(Z):\begin{tikzcd}[column sep=huge,row sep=large]
{[1]} \arrow[r,"{s^0}"] \arrow[d,"{d^1}"] & {[0]} \arrow[d,equal] \\
{[2]} \arrow[r,"{s^0\circ s^0}"] & {[0]}
\end{tikzcd}
$$
\end{theorem}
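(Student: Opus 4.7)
The approach decomposes the theorem into two parts: verifying that $C$ is fully faithful, and then identifying its essential image through the three stated conditions.

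For full faithfulness, I would first describe $C$ explicitly as the nerve associated to a cocyclic object $\Lambda^\bullet$ in $\sf{EffAlgd}$, where $\Lambda^n$ is the walking effect algebroid on $n$ composable arrows (together with the necessary zero arrows and orthocomplements required by Definition \ref{def:effect algebroid}). From this vantage point, any effect algebroid $E$ can be reconstructed from the low-dimensional simplices of $C(E)$: the objects are $E_0 = C(E)_0$, the arrows are $E_1 = C(E)_1$, source and target come from $d_1, d_0$, units from $s_0$, the orthocomplement from the cyclic automorphism $\tau_1$, and composition from 2-simplices. Full faithfulness then follows by a standard nerve-theoretic argument: any map of cyclic sets $C(E) \to C(F)$ is determined by its restriction to objects and arrows, and this restriction automatically preserves the effect algebroid structure because all of that structure is recorded by the simplicial and cyclic operations.

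For the essential image, I would first establish necessity. Given $X = C(E)$, the 2-Segal condition reflects associativity of composition, since a triangulation of a polygon realizes an $n$-composable chain as a colimit of its constituent compositions. The $(U)$ square, after applying $X$, becomes
\[
\begin{tikzcd}
X_2 \arrow[r,"d_2"] \arrow[d,"d_0"'] & X_1 \arrow[d,"d_1"] \\
X_1 \arrow[r,"d_0"'] & X_0
\end{tikzcd}
\]
and its canonical map to the pullback $X_1 \times_{X_0} X_1$ is injective because composition in $E$ is a partial \emph{function}: any composable pair admits at most one composite 2-simplex. The $(Z)$ square encodes the zero-in-one law: a 2-simplex whose diagonal face is a degenerate unit is forced to be itself doubly degenerate, matching the requirement that $f \circ 1_x$ (or $1_y \circ f$) being defined forces $f = 0$.

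For sufficiency, given a cyclic set $X$ satisfying the three conditions, I would construct an effect algebroid $E$ by setting $E_0 := X_0$, $E_1 := X_1$, with $s := d_1$, $t := d_0$, $0_x := s_0(x)$, $(-)^\perp := \tau_1$, and declaring $g \circ f := h$ whenever there exists a (necessarily unique, by sub-pullback of $(U)$) 2-simplex $\sigma \in X_2$ with $d_0\sigma = g$, $d_1\sigma = h$, $d_2\sigma = f$. Associativity follows from 2-Segal applied to $X_3$; the orthocomplement equivalence follows from the interplay between $\tau_2$, the cyclic relations $\tau_n d^i = d^{i-1} \tau_{n-1}$, and the sub-pullback of $(U)$; and the zero-in-one law follows from the $(Z)$ pullback. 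The isomorphism $C(E) \cong X$ then follows by induction on simplicial dimension via 2-Segal.

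The principal obstacle will be in the sufficiency direction: namely verifying that the sub-pullback condition on $(U)$ combined with the cyclic involution $\tau_1$ recovers the full \emph{uniqueness} component of the orthocomplement axiom (that $a^\perp$ is the unique element with $a \circ a^\perp = 1$). This requires unpacking how $\tau_2$ rotates a 2-simplex representing $g \circ f = 1_y$ into simplices witnessing the equivalences $f = g^\perp$ and $g = f^\perp$, and matching this rotational symmetry to the three-way equivalence in Definition \ref{def:effect algebroid}(3). The analogous translations for the zero-in-one axiom and higher associativity are more direct combinatorial manipulations; full details appear in \cite{Roumen}.
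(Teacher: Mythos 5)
The paper does not actually prove Theorem \ref{thm:characterization fo EAd}: it is recalled from Roumen's thesis \cite{Roumen} and used as a black box, so there is no in-text argument to measure yours against. That said, your proposal is a sound reconstruction of the standard proof, and it is consistent with the machinery the paper develops later for the more general weakly 2-Segal setting: the identification of $\tau_1$ with the orthocomplement and the $\tau_2$-rotation argument you flag as the principal obstacle are carried out in Lemma \ref{lem:conds_on_swset}, and the translation of the zero-in-one law into the pullback condition $(Z)$ is exactly the computation of Section \ref{sec:zero one law}. Concerning the point you single out as delicate, the three-way equivalence in axiom (3) of Definition \ref{def:effect algebroid} does dissolve as you predict: Lemma \ref{lem:conds_on_swset}(1) gives $f\circ g=1_x$ iff $g\circ 1_x^\perp=f^\perp$ (with definedness transported both ways), and since $1_x^\perp=0_x$ and $g\circ 0_x$ is always defined via a degenerate $2$-simplex, this is iff $g=f^\perp$; the equivalence with $f=g^\perp$ is then just involutivity of $\tau_1$.

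Two small corrections. First, the square you display as ``$X$ applied to $(U)$'' is mislabelled: the right-hand vertical map should be $d_0\colon X_1\to X_0$ and the bottom map $d_1\colon X_1\to X_0$ (as written, comparing $d_1\circ d_2$ with $d_0\circ d_0$, the square does not commute). This does not affect the substance, since the relevant map is still $(d_2,d_0)\colon X_2\to X_1\times_{X_0}X_1$ onto composable pairs, whose injectivity is the sub-pullback condition; the paper notes in Section \ref{sec:Simplicial effects} that together with the 2-Segal condition this is equivalent to spinyness. Second, in the sufficiency direction you should say a word about why the constructed composition is associative in the strong Segal sense required by axiom (1) of Definition \ref{def:effect algebroid} --- that $(f\circ g)\circ h$ is defined \emph{if and only if} $f\circ(g\circ h)$ is --- since this ``only if'' is precisely where the full 2-Segal condition (bijectivity of the membrane maps for \emph{both} triangulations of $P_4$, not merely the joint injectivity used in the weak version) enters; your sketch invokes ``2-Segal applied to $X_3$'' without isolating this step, and it is the one place where the argument genuinely diverges from the weakly 2-Segal theory developed elsewhere in the paper.
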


In the original formulation of \cite{Roumen}, the $2$-Segal condition is implemented by requiring $X$ to send certain kinds of pushouts, referred to as type (A), to pullbacks.

\begin{remark}
	In \cite{SternCY}, it is shown that cyclic 2-Segal sets correspond to Calabi-Yau algebras in spans of sets. Since Calabi-Yau algebras can be loosely thought of as ``coherently associative algebras with 2-sided duals,'' the relation of the orthocomplement to cyclic-ness is unsurprising. We will explore this further in {Section \ref{sec:orthocomplement}.} 
\end{remark}
 
 


\section{Three notions of weak associativity}
\label{sec:weak associativity}

{To develop a simplicial theory of effects that encompasses both the natural examples of effect algebras and effect algebroids \cite{foulis1994effect,Roumen} and the constructions appearing in the theory of simplicial distributions and measurements \cite{sdist}, our first step is to formulate a suitable notion of weakened associativity. This weak associativity will be central to our simplicial effect theory and also provides an interesting connection to the partial groups introduced by Chermak \cite{Chermak}.}

There are a number of definitions of partial monoids and partial groups in the literature. Most commonly,  one takes the idea that a partial monoid/group is a set {$M$} with $n$-fold partial multiplication operations defined on subsets {$M_n\subset M^{\times n}$}, of {multiplicable $n$-tuples} {(Definition \ref{def:multiplicable effect})}, subject to additional compatibility conditions. In this schema, one is tempted to take ``associativity'' to mean that ``if $a\cdot b$ is defined and $(a\cdot b)\cdot c$ is defined, then $(b\cdot c)$ is defined, $a\cdot (b\cdot c)$ is defined, the three-fold product $a\cdot b\cdot c$ is defined, and the three are equal.''\footnote{The intuitive definition would also involve an implication starting from $a\cdot (b\cdot c)$ being defined. However we omit this for brevity, since we will take a different definition in the end.} However, in the presence of an invertibility requirement (i.e., when studying \emph{partial groups}), this notion of associativity forces the multiplication to be globally defined, i.e., returns us to the realm of classical group theory. 

One traditional way of resolving this difficulty (e.g., in \cite[\S 2]{Chermak}) is to only require downwards implications for associativity. For instance, if the threefold multiplication $a\cdot b\cdot c$ is defined, then one requires that $(a\cdot b)\cdot c$ and $a\cdot(b\cdot c)$ are defined and equal to the threefold multiplication, but not vice-versa. 

{We will take a different route}
as we will see our guiding examples satisfy a stronger notion of associativity (we actually term this notion \emph{weak associativity}, because of its relation to the intuitive guess at an associativity condition), which lies between the two definitions described above. Namely, if $(a\cdot b)\cdot c$ and  $a\cdot (b\cdot c)$ are \emph{both} defined then $a\cdot b \cdot c$ is defined and all three are equal.

This gives us our three notions of associativity for partial monoids. In order of decreasing strength: 
\begin{enumerate}
\item  the intuitive definition given in \cite{Segal},

\item the weak associativity described in the previous paragraph, 

\item the associativity imputed to partial groups in \cite{Chermak}. 
\end{enumerate}
We will give formal definitions of each of these in the next section, before exploring their implications further.

Following this exposition, we will explore simplicial sets associated to these three stuctures in the form of nerve operations, and will define conditions which characterize our three forms of associativity in the context of simplicial sets.  

\subsection{Partial monoids, {associativity, and weak associativity}}

In the particular cases we are concerned with, all of the multiplications of a partial monoid are determined by the 2-fold products{, e.g., as in Definition \ref{def:partial monoid}}. We will therefore begin from the assumption that a partial monoid consists of a set, a partial binary multiplication, and conditions thereupon. This differs in form from, e.g., \cite[Definition 2.1]{Chermak}, where multiplications of all arities are part of the initial structure. However, our definition is not as different as it may first appear, as the definition of \cite{Chermak} requires compatibility of the higher arity multiplications with the 2-fold multiplication. Indeed, as we will see, we can build all of definitions up from the {following}, extremely basic structure.

\begin{definition}
	A \emph{partial unital magma} $(M,\cdot,1)$ consists of a set $M$, a partially defined binary operation $\cdot$ on $M$, and an element $1\in M$. We will call a pair $(m,n)\in M\times M$ \emph{multiplicable} if it is in the domain of definition of $\cdot$. These data must satisfy the following \emph{unitality condition}:
	\begin{itemize}
		\item (Unitality) For every $m\in M$, the pairs $(m,1)$ and $(1,m)$ are multiplicable, and $m\cdot 1=m=1\cdot m$. 
	\end{itemize}
	A \emph{morphism of partial unital magmas} is a map of sets which preserves products and units, where preservation of products means that if $(m,n)$ is multiplicable, then $(f(m),f(n))$ is multiplicable, and $f(m\cdot n)=f(m)\cdot f(n)$. We denote the category of partial unital magmas by $\sf{Mag}$.
\end{definition}

We then introduce our first two notions of associativity. The first is that of \cite{Segal} and corresponds to the \emph{2-Segal conditions} of \cite{DK}, but is slightly too strong for our purposes. The second is a strategic weakening of this condition to {accommodate} our key examples. Once this is done, we will discuss the definition given in Chermak, and some key examples.

\begin{definition}
	For $n\geq 2$, a \emph{(binary) bracketing of $n$} is an isomorphism class of planar binary rooted trees with $n$ leaves. Given a bracketing $T$ of $n$ and a partially defined map of sets $\mu:M\times M\to M$, there is a unique corresponding partially defined operation 
	\[
	\begin{tikzcd}
		\mu^T:&[-3em] M^{\times n} \arrow[r] & M 
	\end{tikzcd}
	\] 
	which performs the operation $\mu$ for each vertex of the tree $T$.  
\end{definition}

\begin{definition}\label{def:multiplicable}
	Let $(M,\cdot,1)$ be a partial unital magma. A \emph{bracketed $n$-tuple} in $M$ is an element $\underline{b}\in M^{\times n}$ together with a binary bracketing $T$ of $n$. We say that a bracketed tuple is \emph{multiplicable} in $M$ if $\underline{b}$ is in the domain of definition of the map 
	\[
	\begin{tikzcd}
		\cdot^T: &[-3em] M^{\times n} \arrow[r] & M. 
	\end{tikzcd}
	\]
	We say that an element $\underline{b}\in M^{\times n}$ is \emph{multiplicable} if, for every binary bracketing $T$ of $n$, $(\underline{b},T)$ is multiplicable. 
\end{definition}

\begin{definition}
	Let $(M,\cdot,1)$ be a partial unital magma. We say that $M$ is  
	\begin{itemize}
		\item \emph{weakly associative} if, whenever $\underline{b}\in M^{\times n}$ is multiplicable, then the products associated to any two binary bracketings are equal;
		\item \emph{associative} if it is weakly associative and an $n$-tuple $\underline{b}\in M^{\times n}$ is multiplicable if and only if there exists a binary bracketing $T$ of $n$ such that $(\underline{b},T)$ is multiplicable.
	\end{itemize}
	We will call a weakly associative partial unital magma a \emph{weak partial monoid}, and we will call an associative partial unital magma a \emph{partial monoid}.  
\end{definition}

\begin{remark}
It is worth remarking that our terminology is chosen to accord with Segal's definition of partial monoid in \cite{Segal} {(Definition \ref{def:partial monoid})}, which is helpfully recapitulated in \cite[Example 2.1]{BOORS}. That is, our definition of partial monoid agrees with that of \cite{Segal}, and our definition of a weak partial monoid is a weakening of this definition. This creates a minor terminological issue once we begin discussing invertibility, to wit, in our terminology a partial monoid with all elements invertible is \emph{not} the same thing as a partial group as defined in \cite{Chermak}. 
\end{remark}

  \begin{lemma}
  	Let $(M,\cdot,1)$ be a partial unital magma. Then $M$ is a partial monoid if and only if it satisfies the following condition from \cite{Segal}: 
  	\begin{itemize}
  		\item For $a,b,c\in M$, the multiplication $(a\cdot b)\cdot c$ {is defined} if and only if $a\cdot (b\cdot c)$ is defined, and if both are defined, they are equal. 
  	\end{itemize}
  	Moreover, $M$ is a weak partial monoid if and only if it satisfies the following condition:
  	\begin{itemize}
  		\item For $a,b,c\in M$, if both $(a\cdot b)\cdot c$ and $a\cdot (b\cdot c)$ are defined, they are equal. 
  	\end{itemize}
  \end{lemma}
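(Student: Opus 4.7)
Both biconditionals have trivial forward directions: if $M$ is a (weak) partial monoid, set $n=3$ and $\underline{b}=(a,b,c)$, and observe that the two binary bracketings of a 3-tuple are precisely $(a\cdot b)\cdot c$ and $a\cdot(b\cdot c)$. The defining conditions of (weak) partial monoid applied to this case yield the displayed Segal-style conditions immediately. So the content is entirely in the reverse directions.

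The reverse directions rest on the classical combinatorial fact that the set of binary bracketings of $n$, viewed as the vertex set of the associahedron (equivalently, the Tamari lattice), is connected under \emph{elementary associator moves}: moves that, at a single internal vertex of the tree, replace a subexpression $(x\cdot y)\cdot z$ with $x\cdot(y\cdot z)$. My plan is to record this combinatorial fact and then reduce each reverse implication to it by an induction on the length of a connecting sequence of moves.

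For the weak partial monoid direction, assume the three-element weak condition and let $\underline{b}\in M^{\times n}$ be multiplicable, with $T, T'$ two bracketings of $n$. Connect them by a sequence $T=T_0, T_1, \ldots, T_k=T'$ of elementary associator moves. Multiplicability of $\underline{b}$ guarantees that every $(\underline{b}, T_i)$ is defined; in particular, at each step the two local subexpressions $(x\cdot y)\cdot z$ and $x\cdot(y\cdot z)$ occurring at the relevant internal vertex (where $x,y,z$ are the already-defined subproducts at the inputs of that vertex) are \emph{both} defined. The hypothesis then yields their equality, so the top-level product is preserved across each step, giving $\cdot^T(\underline{b})=\cdot^{T'}(\underline{b})$.

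For the partial monoid direction, weak associativity follows from the weak case. The remaining content is the implication that multiplicability with respect to some bracketing implies multiplicability with respect to all. Here I would induct on the number of elementary moves needed: if $(\underline{b}, T)$ is defined and $T'$ is obtained from $T$ by one move swapping $(x\cdot y)\cdot z$ for $x\cdot(y\cdot z)$ at an internal vertex, then the ``if and only if'' part of the three-element Segal condition gives that the swapped subexpression is defined, and weak associativity gives that it takes the same value as the original; the products on the path from this vertex to the root are therefore defined with unchanged inputs, so $(\underline{b}, T')$ is defined. The main subtlety I expect throughout is bookkeeping which subexpressions remain defined during substitutions, but this reduces in each case to applying the three-element hypothesis at a specific vertex whose sibling inputs are already known to be defined.
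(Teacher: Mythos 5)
Your proposal is correct and takes essentially the same route as the paper, which simply invokes the well-known fact that any two binary parenthesizations are connected by a sequence of elementary moves $a\cdot(b\cdot c)\leftrightsquigarrow (a\cdot b)\cdot c$; you merely spell out the induction along such a sequence (tracking definedness and values of subexpressions) that the paper leaves implicit. No gaps.
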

  
  \begin{proof}
  	Both statements amount to the well-known fact that any two binary parenthesizations can be related by a sequence of moves of the form 
  	\[
  	a(bc)\leftrightsquigarrow (ab)c. \qedhere
  	\]
  \end{proof}
  
  \begin{definition}
  	Given a weak partial monoid $(M,\cdot,1)$ and $k\geq 2$, we denote by $M_k\subset M^{\times k}$ the set of \emph{multiplicable $k$-tuples} in $M$. Note that by weak associativity we obtain a unique map 
  	\[
  	\begin{tikzcd}
  		M_k \arrow[r] & M 
  	\end{tikzcd}
  	\]
  	which is equal to $\cdot^T$ for any binary bracketing $T$. 
  	
  	A morphism $f:(M,\cdot,1)\to (L,\ast,e)$ of weak partial monoids is a map of sets $f:M\to L$ such that $f(1)=e$, and such that $f$ satisfies the following two equivalent conditions. 
  	\begin{enumerate}
  		\item For every $k\geq 2$, $f^{\times k}(M_k)\subset L_k$ and the diagram 
  		\[
  		\begin{tikzcd}[column sep=huge,row sep=large]
  			M_k \arrow[d]\arrow[r,"{f^{\times k}}"] & L_k\arrow[d]\\
  			M \arrow[r,"f"'] & L 
  		\end{tikzcd}
  		\]
  		commutes. 
  		\item If $(m,n)\in M_2$, then $(f(m),f(n))\in L_2$ and $f(m\cdot n)=f(m)\ast f(n)$. 
  	\end{enumerate}
  	We denote by $\sf{WPM}$ and $\sf{PM}$ the categories of weak partial monoids and partial monoids, respectively. Denoting by $\sf{Mon}$ the usual category of monoids, note that we have canonical inclusions 
  	\[
  	\sf{WPM}\supset \sf{PM}\supset \sf{Mon}. 
  	\]
  \end{definition}

\subsection{{Chermak's partial monoids}}	Finally, we reformulate the notion of associativity found in \cite{Chermak}. Note that, rather than being a property, this is an additional \emph{structure} on a partial unital magma. We first state the definition as it appears in \cite[Definition 2.1]{Chermak}, omitting the discussion of invertibility, then provide a reformulation in terms of partial unital magmas. Note that our terminology \emph{does not} follow that of \cite{Chermak}.   
	
	\begin{definition}\label{defn:PAS}
		Let $M$ be a set and $\mathbf{W}(M)$ the free monoid on $M$. A \emph{partial associativity structure (PAS)} on $M$ consists of a subset $\mathbf{D}\subset \mathbf{W}(M)$ and a map $\Pi: \mathbf{D}\to M$ satisfying the following four conditions. 
		\begin{enumerate}
			\item For any $m\in M$, the word $m\in \mathbf{D}$, i.e., $M\subset \mathbf{D}$. 
			\item If $u\circ v\in \mathbf{D}$, then $u\in\mathbf{D}$ and $v\in \mathbf{D}$. 
			\item The map $\Pi$ restricts to the identity map on $M\subset \mathbf{D}$. 
			\item If $u\circ v\circ w\in \mathbf{D}$, then $u\circ \Pi(v)\circ w\in \mathbf{D}$ and 
			\[
			\Pi(u\circ v\circ w)=\Pi(u\circ \Pi(v)\circ w).
			\]
		\end{enumerate} 
	A \emph{morphism of PAS's} is a map of sets $f:M\to N$ such that the induced diagram 
	\[
	\begin{tikzcd}[column sep=huge,row sep=large]
		\mathbf{D}_M\arrow[r,"\mathbf{W}(f)"]\arrow[d,"\Pi_M"'] &\mathbf{D}_N\arrow[d,"\Pi_N"] \\
		M\arrow[r,"f"'] & N 
	\end{tikzcd}
	\]
	commutes. We denote by $\sf{PAS}$ the category of PAS's. 
	\end{definition}

\begin{example}
Canonical examples of PAS's, also {called partial monoids in \cite{Chermak}}, arise from the homotopy theory of classifying spaces of groups and from more generalized structures known as \emph{localities}, 
{a subclass of partial groups with well-behaved $p$-local structure.
{They} were originally motivated by 
the Martino-Priddy conjecture,  
now a celebrated theorem due to Oliver \cite{Oliver1, Oliver2}, {which} states that, given a finite group $G$ and a prime $p$, the homotopy type of {the $p$-completed classifying space} $BG^{\wedge}_p$ 
is uniquely determined by the $p$-local fusion system of $G$.

The ``generalized Martino-Priddy conjecture'' says that there is a unique ``classifying space'' associated to every saturated fusion system $\mathcal{F}$, such a space is denoted by $B\mathcal{F}$. In fact, the conjecture asserts the existence and uniqueness, up to isomorphism, of a certain category $\mathcal{L}$ called the \emph{centric linking system of} $\mathcal{F}$ \cite[Definition 1.7]{BrotoLeviOliver}, and $B\mathcal{F}$ is just the $p$-completed geometric realization of $\mathcal{L}$ (i.e. $B\mathcal{F}=\lvert\mathcal{L}\rvert^{\wedge}_p$).

Chermak solved this conjecture \cite[Section 7]{Chermak} by showing that there is a bijective correspondence, up to isomorphism, between the class of centric linking systems and a certain subclass of localities that he also called centric linking systems, and then the existence and uniqueness, up to isomorphism, of a Chermak's centric linking system associated to $\mathcal{F}$.
}
\end{example}

Our reformulation is as follows. 
	
	\begin{definition}
		Let $(M,\cdot, 1)$ be a partial unital magma. We say that a multiplicable $n$-tuple $\underline{b}$ is \emph{associable} if, for any two  bracketings $T,S$ of $n$, $\cdot^T (\underline{b})=\cdot^S (\underline{b})$. We further call $\underline{b}$ \emph{fully associable} if, for any $0\leq i<j\leq n$, the tuple $(b_i,b_{i+1},\ldots,b_j)$ is associable. We denote by $F_n(M)$ the set of fully associable $n$-tuples in $M$, and write 
		\[
		F(M):=\coprod_{n\geq 0} F_n(M).
		\]  
	\end{definition}

	\begin{definition}\label{defn:assoc_datum}
		Let $(M,\cdot, 1)$ be a partial unital magma. An \emph{associativity datum} on $M$ consists of a subset $A_n\subset F_n(M)$ for each $n\geq2$ such that 
		\begin{enumerate}
			\item The set $A_2$ is the domain of the partial multiplication $\cdot$. 
			\item If $(b_1,\ldots,b_n)\in A_n$, then for any $1\leq i<n$, $(b_1,\ldots, b_i)\in A_i$ and $(b_{i+1},\ldots,b_n)\in A_{n-i}$. 
			\item If $(b_1,\ldots,b_n)\in A_n$ , then $(b_1,\ldots,b_{i-1},1,b_i,\ldots,b_n)\in A_{n+1}$ for any $1\leq i\leq n+1$. 
		\end{enumerate} 
		A morphism $f:M\to N$ of partial unital magmas is said to \emph{commute with associativity data} if $f^{\times n}(A_n^M)\subset f^{\times n}(A_n^N)$. We denote by $\sf{Mag}^{\on{ad}}$ the category of partial unital magmas equipped with associativity data.
	\end{definition}

	\begin{example}
		Note that every partial unital magma $(M,\cdot,1)$ can be equipped with an associativity datum by setting $A_n=F_n(M)$. Note that this associativity datum is the maximal associativity datum possible. This defines a functor $\sf{Mag}\to \sf{Mag}^{\on{ad}}$.
	\end{example}

	\begin{proposition}\label{prop:Magad equivalent to PAS}
		There is an equivalence of categories 
		\[
		\sf{Mag}^{\on{ad}}\simeq \sf{PAS}.
		\]
	\end{proposition}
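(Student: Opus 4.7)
The plan is to exhibit explicit mutually inverse functors $\Phi \colon \sf{Mag}^{\on{ad}} \to \sf{PAS}$ and $\Psi \colon \sf{PAS} \to \sf{Mag}^{\on{ad}}$ and verify that each is natural.

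Given $(M,\cdot,1,\{A_n\}_{n\ge 2}) \in \sf{Mag}^{\on{ad}}$, I would define $\Phi(M)$ to have underlying set $M$, set of admissible words
$$
\mathbf{D} := \{\emptyset\} \sqcup M \sqcup \coprod_{n\ge 2} A_n \subset \mathbf{W}(M),
$$
viewing each $A_n$ as a subset of the length-$n$ words, and map $\Pi \colon \mathbf{D} \to M$ given by $\Pi(\emptyset) := 1$, $\Pi|_M := \on{id}$, and on a tuple in $A_n$ by its iterated product, which is unambiguously defined because $A_n \subset F_n(M)$ so all binary bracketings agree. PAS axioms (1) and (3) are immediate by construction, axiom (2) is clause (2) of Definition~\ref{defn:assoc_datum}, and axiom (4) combines the bracket-independence of $F_n(M)$ with the closure and unit-insertion clauses (2) and (3) of Definition~\ref{defn:assoc_datum} to show that the substitution $u\circ v\circ w \mapsto u\circ \Pi(v)\circ w$ stays inside $\mathbf{D}$ and preserves the total product.

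Conversely, for a PAS $(M,\mathbf{D},\Pi)$ I would define $\Psi(M)$ with unit $1 := \Pi(\emptyset)$ (PAS axiom (2) forces $\emptyset \in \mathbf{D}$ once $\mathbf{D}$ is nonempty, and unitality is then extracted from axiom (4) applied to words of the form $m \circ \emptyset \circ n$), binary operation $\cdot := \Pi|_{\mathbf{D}\cap (M\times M)}$, and associativity datum $A_n := \mathbf{D}\cap M^{\times n}$ for $n \geq 2$. The nontrivial content is the claim $A_n \subset F_n(M)$, i.e.\ tuples in $A_n$ are \emph{fully} associable. This is precisely what PAS axiom (4) delivers: any two binary bracketings of $(b_1,\dots,b_n)$ are connected by a finite sequence of moves $a(bc)\leftrightsquigarrow (ab)c$, and at each step axiom (4) certifies that both bracketings are defined and produce the same value.

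Functoriality and the natural isomorphisms $\Psi \circ \Phi \cong \on{id}$ and $\Phi \circ \Psi \cong \on{id}$ are then formal: a morphism $f\colon M\to N$ commuting with associativity data induces $\mathbf{W}(f)$ sending $\mathbf{D}_M$ into $\mathbf{D}_N$ and intertwining $\Pi$, and, conversely, a PAS morphism restricts to a map preserving binary products and sending $A_n^M$ into $A_n^N$. The main obstacle, and the step I would treat most carefully, is the inclusion $A_n \subset F_n(M)$ on the PAS side: one needs an induction on the number of bracketing-swap moves separating two parenthesizations, showing that at every intermediate stage the relevant subwords remain in $\mathbf{D}$ so that axiom (4) can be invoked. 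Everything else is a routine translation between a ``total'' description in terms of admissible words in the free monoid and a ``binary'' description in terms of a partial operation together with its closure data.
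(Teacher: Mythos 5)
Your proposal is correct and follows essentially the same route as the paper: the same dictionary $\mathbf{D}=\{\varnothing\}\cup M\cup\coprod_{n\geq 2}A_n$ with $\Pi$ given by iterated products, the inverse direction being exactly what the paper compresses into ``these implications are reversible,'' and you are if anything more careful about the one genuinely nontrivial point, namely that PAS axioms (2) and (4) force every word in $\mathbf{D}$ to be fully associable. One small correction: unitality of $\Psi(M)$ should be extracted from axiom (4) applied to the decomposition $m=m\circ\varnothing\circ\varnothing$ (so the hypothesis $u\circ v\circ w\in\mathbf{D}$ holds by axiom (1), yielding $(m,1)\in\mathbf{D}$ and $\Pi(m,1)=m$), not to $m\circ\varnothing\circ n$, whose hypothesis $(m,n)\in\mathbf{D}$ need not hold for arbitrary $m,n$; the latter decomposition is instead what gives condition (3) of the associativity datum.
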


	\begin{proof}
		Given a partial unital magma $M$, define a map 
		\[
		\begin{tikzcd}
			\psi_M: &[-3em] F(M)\arrow[r] & \mathbf{W}(M)
		\end{tikzcd}
		\]
		which is simply the inclusion. Given an associativity datum $A_M$ on $M$, define a PAS on $M$ by setting $\mathbf{D}_{M,A}=\psi_M(A)\cup M\cup \{\varnothing\}$. We define $\Pi_{M,A}$ to be the map which acts on length-$n$ words by $\cdot^T$ for some bracketing $T$ of $n$, acts on $M$ as the identity, and sends $\varnothing$ to $1$.  
		
		We then check conditions (1)-(4) in {Definition} \ref{defn:PAS}. Condition (1) holds by construction. Condition (2) corresponds exactly to condition (2) in {Definition} \ref{defn:assoc_datum}. Condition (3) again follows by construction. Condition (4) is equivalent to the fact that the words in $A$ are fully associable. Note that these implications are reversible, showing immediately that this functor is essentially surjective. 
		
		To see that this construction is functorial, let $f:(M,A_M)\to (N,A_N)$ be a morphism in $\sf{Mag}^{\on{ad}}$. It is easy to check that the same underlying map of sets defines a morphism from $(M,\sf{D}_{M,A_M},\Pi_{M,A_M})$ to $(N,\sf{D}_{N,A_N},\Pi_{N,A_N})$. Since morphisms in both categories are determined by their underlying map of sets, it is immediate that this functor is faithful. Since, modulo  $M$ and $\varnothing$, $\mathbf{D}_{M,A_M}$ and $A_M$  are the same set, and commuting with $\Pi_{M,A_M}$ is implied by commuting with $\cdot$ by fully associability, it is similarly easy to se that this functor is full. 
	\end{proof}

	\begin{remark}\label{rmk:Chermak_non-equivalent_same}
		This proposition and the preceding example go a long way towards explaining why we need a definition which is intermediate between Segal's and Chermak's to define a theory of partial monoids which includes interesting examples of both partial groups and effect algebras. On the one hand, Segal's associativity condition precludes the possibility of interesting partial groups, since adding invertibility forces the multiplication to be globally defined. On the other, as the above shows, Chermak's definition need not require \emph{any} associativity for the underlying binary operation, and can encode the same underlying algebraic structure (partial unital magma) in multiple non-isomorphic ways. 
	\end{remark}

	We conclude this section with a discussion of the relations between the categories of algebraic structures defined in this section. We have obvious forgetful functors which we denote as follows 
	\[
	\begin{tikzcd}
		\sf{PM}\arrow[r,"\Psi"] & \sf{WPM}\arrow[r,"\Phi"] & \sf{Mag} 
	\end{tikzcd}
	\]	
	and a functor 
	\[
	\begin{tikzcd}
		(-)^\sharp:&[-3em] \sf{Mag} \arrow[r] & \sf{Mag}^{\on{ad}}
	\end{tikzcd}
	\]
	which equips a partial unital magma with the maximal associativity datum. The functor $(-)^\sharp$ is right adjoint to the forgetful functor. 
	
	\begin{proposition}\label{prop:max_ass_data_FF}
		The functors $\Psi$, $\Phi$, and $(-)^\sharp$ are fully faithful. 
	\end{proposition}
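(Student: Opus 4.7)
The plan is to handle each of the three functors separately, since $\Psi$ and $\Phi$ are essentially tautological full-subcategory inclusions, while $(-)^\sharp$ requires a small inductive verification.

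For $\Psi\colon \sf{PM}\to\sf{WPM}$, a partial monoid is by definition a weak partial monoid satisfying the additional property that multiplicability of some binary bracketing of an $n$-tuple implies multiplicability of all bracketings. The morphism conditions stated for $\sf{PM}$ and $\sf{WPM}$ are verbatim the same (namely, condition (2) on pairs $(m,n)\in M_2$), so $\Psi$ is the inclusion of a full subcategory and hence fully faithful. The argument for $\Phi\colon\sf{WPM}\to\sf{Mag}$ is identical: weak partial monoids are partial unital magmas satisfying weak associativity as a property, and morphisms of weak partial monoids are precisely morphisms of the underlying partial unital magmas.

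For $(-)^\sharp\colon\sf{Mag}\to\sf{Mag}^{\on{ad}}$, faithfulness is immediate: the functor is the identity on underlying sets, and by Definition \ref{defn:assoc_datum} a morphism in $\sf{Mag}^{\on{ad}}$ is by definition a morphism of partial unital magmas satisfying an extra compatibility. For fullness, given a morphism $f\colon M\to N$ in $\sf{Mag}$ I must check that $f^{\times n}(F_n(M))\subset F_n(N)$ for every $n\geq 2$, so that $f$ defines a morphism $M^\sharp\to N^\sharp$ in $\sf{Mag}^{\on{ad}}$. The key step is the following lemma proved by induction on the number of leaves in a binary bracketing tree $T$: if $\underline{b}\in M^{\times n}$ lies in the domain of $\cdot^T$ in $M$, then $f^{\times n}(\underline{b})$ lies in the domain of $\cdot^T$ in $N$, and
\[
\cdot^T\bigl(f^{\times n}(\underline{b})\bigr)=f\bigl(\cdot^T(\underline{b})\bigr).
\]
The base case $n=2$ is the defining property of a morphism in $\sf{Mag}$, and the inductive step splits $T$ at the root and applies the defining property once more.

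Given this lemma, fullness of $(-)^\sharp$ follows quickly: if $\underline{b}\in F_n(M)$ and $T,S$ are two bracketings of some sub-tuple, then $\cdot^T$ and $\cdot^S$ agree on that sub-tuple in $M$ by full associability, and applying $f$ to both sides — combined with the lemma — shows that $\cdot^T$ and $\cdot^S$ agree on the image sub-tuple in $N$. Closure under sub-tuples is inherited for the same reason, so $f^{\times n}(\underline{b})\in F_n(N)$. The main (mild) obstacle is simply bookkeeping for the induction over bracketing trees; no new conceptual idea is required.
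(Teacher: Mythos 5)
Your proposal is correct and follows essentially the same route as the paper, which simply asserts in one sentence that every morphism of partial unital magmas commutes with the maximal associativity datum; your inductive lemma over bracketing trees supplies the verification the paper leaves implicit. The only cosmetic point is that this containment $f^{\times n}(F_n(M))\subset F_n(N)$ is really what makes $(-)^\sharp$ well-defined on morphisms (fullness then being the tautological observation that a $\sf{Mag}^{\on{ad}}$-morphism $M^\sharp\to N^\sharp$ is by definition an underlying $\sf{Mag}$-morphism), but the mathematical content you prove is exactly what is needed.
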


	\begin{proof}
		For $\Phi$ and $\Psi$, this is immediate, since the definition of morphism in no way depends on the associativity condition. Similarly, every morphism $f:(M,\cdot,1_M)\to (N,\ast,1_N)$ for partial unital magmas commutes with the maximal associativity datum, and so $(-)^\sharp$ is fully faithful. 
	\end{proof}

	\section{Nerves and associativity conditions}
	\label{sec:Nerves and associativity conditions}
	
	We now turn to considering the algebraic structures introduced above as simplicial sets. To this end, we will define a nerve operation from $\sf{Mag}^{\on{ad}}$ which yields nerves for all four of the categories described above. However, since the functors $\Psi$, $\Phi$, and $(-)^\sharp$ are fully faithful, all of these nerves can, in fact, be induced by a cosimplicial object in $\sf{PM}$. 
	
	\begin{definition}
		We define a cosimplicial object in $\sf{PM}$ 
		\[
		\begin{tikzcd}
			\mathfrak{D}^\bullet: &[-3em] \Delta \arrow[r] & \sf{PM} 
		\end{tikzcd}
		\]
		where $\mathfrak{D}^n$ is the partial monoid with non-identity elements $m_{i,j}$ for $0\leq i<j\leq n$. The multiplicable pairs are $(m_{i,j},m_{j,k})$ for $0\leq i<j<k\leq n$, and the multiplication for non-identity elements is 
		\[
		m_{i,j}\cdot m_{j,k}=m_{i,k}. 
		\] 
		Given a map $\phi:[n]\to [k]$ in $\Delta$, the corresponding map $\phi_\ast:\mathfrak{D}^n\to \mathfrak{D}^k$ is given by 
		\[
		\phi_\ast(m_{i,j})=m_{\phi(i),\phi(j)}
		\]
		where we take the convention that $m_{i,i}=1$ is the identity element. It is easily checked that this is a map of weak partial monoids.
		
		Composing with the functors 
		\[
		\begin{tikzcd}
			\sf{PM}\arrow[r,"\Psi"] & \sf{WPM}\arrow[r,"\Phi"]& \sf{Mag}\arrow[r,"(-)^\sharp"] & \sf{Mag}^{\on{ad}} 
		\end{tikzcd}
		\]
		this yields a cosimplicial object in all four categories, which we will abusively also denote by $\mathfrak{D}^\bullet$.
	\end{definition} 

	\begin{definition}
		We denote by 
		\[
		\begin{tikzcd}
			N:&[-3em] \sf{Mag}^{\on{ad}} \arrow[r] & \Set_\Delta 
		\end{tikzcd}
		\]
		the nerve operation corresponding to the cosimplicial object $\mathfrak{D}^\bullet$. Explicitly, 
		\[
		N(M,A)_n:=\sf{Mag}^{\on{ad}}((\mathfrak{D}^n)^\sharp,(M,A)).
		\]
		Abusively, we also use $N$ to denote the functors from $\sf{PM}$, $\sf{WPM}$, and $\sf{Mag}$ to simplicial sets. Note that if $\sf{C}$ is any one of these categories, the fully-faithfulness of the functors relating them implies that 
		\[
		N(M)_n=\sf{C}(\mathfrak{D}^n,M).
		\]
	\end{definition}

	\subsection{Spinyness, reducedness, and {partial unital magmas} with associativity data}

	We begin our discussion by showing that these nerves are fully faithful, and by characterizing the essential image of $\sf{Mag}^{\on{ad}}$ under $N$. For the latter, we will need the following definitions.

		Given a simplicial subset $S\subset \Delta^n$ and an arbitrary simplicial set $X$, denote the corresponding \emph{membrane set} (of \cite[\S 2.2]{DK})
		by 
		\[
		\on{MS}(S,X):= \Set_\Delta(S,X)
		\]
		and note that this provides a functor $\on{MS}(-,X)$ from the opposite of the poset of simplicial subsets of $\Delta^n$ to $\Set$. Two key examples of simplicial subsets of $\Delta^n$ will define the 1-Segal and 2-Segal conditions.   
		
		For $n\geq 2$, denote by $\on{Sp}^n\subset \Delta^n$ the spine of $\Delta^n$. Then the corresponding membrane sets are 
		\[
		\on{MS}(\on{Sp}^n,X)\cong X_1\times_{X_0}X_1\times_{X_0}\cdots \times_{X_0}X_1. 
		\]
		The \emph{1-Segal maps} are the maps 
		\[
		\begin{tikzcd}
			X_n \arrow[r] & \on{MS}(\on{Sp}^n,X). 
		\end{tikzcd}
		\]
		We call a simplicial set $X$ \emph{spiny} if the 1-Segal maps are injective.

		\begin{figure}[htb] 
		  \begin{center}
		  	\begin{tikzpicture}[decoration={
		  			markings,
		  			mark=at position 0.5 with {\arrow{>}}}]
		  		\path (0,0) node (A0) {0};
		  		\path (2,0) node (A1) {1};
		  		\path (2,2) node (A2) {2};
		  		\path (0,2) node (A3) {3};
		  		 
		  		 \draw[postaction={decorate}] (A0) to (A1); 
		  		 \draw[postaction={decorate}] (A1) to (A2);
		  		 \draw[postaction={decorate}] (A2) to (A3);
		  		 \draw[postaction={decorate}] (A0) to (A3);  
		  		 \draw[postaction={decorate}] (A1) to (A3);  
		  		 \begin{scope}[xshift=5cm]
		  		 	\path (0,0) node (B0) {0};
		  		 	\path (2,0) node (B1) {1};
		  		 	\path (2,2) node (B2) {2};
		  		 	\path (0,2) node (B3) {3};
		  		 	
		  		 	\draw[postaction={decorate}] (B0) to (B1); 
		  		 	\draw[postaction={decorate}] (B1) to (B2);
		  		 	\draw[postaction={decorate}] (B2) to (B3);
		  		 	\draw[postaction={decorate}] (B0) to (B3);  
		  		 	\draw[postaction={decorate}] (B0) to (B2);  
		  		 \end{scope}
		  	\end{tikzpicture}
		  \end{center}
		\caption{{Triangulations of the planar $4$-gon with ordered vertices $P_4$. We sometimes denote the former triangulation by $\squaredivright$ and the latter by $\squarediv$.}}
		\label{fig:triangulations}
		\end{figure}

		For $n\geq 3$, let $P_{n+1}$ denote a planar $(n+1)$-gon with vertices labeled $0,1,\ldots,n$ counterclockwise. A triangulation $\scr{T}$ of $P_{n+1}$ determines a $2$-dimensional simplicial subset $\Delta^{\scr{T}}\subset \Delta^n$ consisting of precisely those $2$-simplices $\Delta^{\{i,j,k\}}$ such that $\{i,j,k\}$ is a triangle of $\scr{T}$. {See Figure \ref{fig:triangulations} for $n=3$.} For $X\in \Set_\Delta$, there is a natural identification of 
		\[
		\Set_{\Delta}(\Delta^\scr{T},X)
		\]
		with an iterated pullback of copies of $X_2$ over copies of $X_1$. 	
		

\begin{definition}	\label{def:2 segal}
We call a simplicial set \emph{2-Segal} if for every triangulation of $P_{n+1}$ the induced natural maps
\[
		\begin{tikzcd}
			X_n \arrow[r] & \on{MS}(\Delta^{\scr{T}},X),
		\end{tikzcd}
		\]
called the 2-Segal maps, are isomorphisms. 
\end{definition} 		
		
		Finally, we call a simplicial set $X$ \emph{reduced} if $X_0\cong \ast$.

	\begin{lemma}\label{lem:Nerve_n_composable}
		A morphism $f:(\mathfrak{D}^n)^\sharp\to (M,A_M)$ of partial unital magmas with associativity data is uniquely determined by its values on the elements $m_{i,i+1}$ for $0\leq i<n$. An assignment sending $m_{i,i+1}\in \mathfrak{D}^n$ to $g(m_{i,i+1})\in M$ extends to a morphism in $\sf{Mag}^{\on{ad}}$ if and only if the sequence 
		\[
		(g(m_{0,1}),g(m_{1,2}),\ldots,g(m_{n-1,n}))
		\]
		lies in $A_M$. 
	\end{lemma}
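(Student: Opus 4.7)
The plan is to exploit the fact that $\mathfrak{D}^n$ is generated as a partial unital magma by its spine $m_{0,1},m_{1,2},\ldots,m_{n-1,n}$, via the identity $m_{i,j}=m_{i,i+1}\cdot m_{i+1,i+2}\cdots m_{j-1,j}$, a product which is bracketing-independent by iterated application of the defining relation $m_{a,b}\cdot m_{b,c}=m_{a,c}$. Since any morphism $f$ in $\sf{Mag}^{\on{ad}}$ preserves the partial multiplication and satisfies $f(1)=1$, the value of $f$ on $m_{i,j}$ is determined by its values on the spine, establishing the uniqueness statement.

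For the ``only if'' direction, I would observe that the spine itself lies in $F_n(\mathfrak{D}^n)$: any consecutive sub-tuple $(m_{i,i+1},\ldots,m_{j-1,j})$ multiplies under any bracketing to $m_{i,j}$, so the spine is fully associable. Because $(\mathfrak{D}^n)^\sharp$ carries the maximal associativity datum, $A_n^{(\mathfrak{D}^n)^\sharp}=F_n(\mathfrak{D}^n)$, and the morphism condition forces the image $(g(m_{0,1}),\ldots,g(m_{n-1,n}))$ to lie in $A_n^M$.

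For the ``if'' direction, assume $(g_0,\ldots,g_{n-1})\in A_n^M$, writing $g_i:=g(m_{i,i+1})$. Iterating axiom (2) of the associativity datum, every consecutive sub-tuple $(g_i,\ldots,g_{j-1})$ lies in $A_{j-i}^M\subset F_{j-i}(M)$, hence is associable and has a well-defined product. Define $f(1):=1$ and $f(m_{i,j}):=g_i\cdots g_{j-1}$. To see that $f$ preserves the partial multiplication, note that whenever $m_{i,j}\cdot m_{j,k}$ is defined in $\mathfrak{D}^n$, the associability of $(g_i,\ldots,g_{k-1})$ ensures that the particular bracketing $(g_i\cdots g_{j-1})\cdot(g_j\cdots g_{k-1})$ is defined and evaluates to $g_i\cdots g_{k-1}=f(m_{i,k})$; the unit cases are immediate.

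The hard part will be verifying that $f$ respects associativity data, i.e.\ that $f^{\times k}(F_k(\mathfrak{D}^n))\subset A_k^M$ for every $k\geq 2$. Using axiom (3) to handle interspersed identities, a fully associable tuple in $\mathfrak{D}^n$ reduces to the form $(m_{i_0,j_0},\ldots,m_{i_{k-1},j_{k-1}})$ with $j_\ell=i_{\ell+1}$, whose $f$-image is the consolidation of $(g_0,\ldots,g_{n-1})$ into products over consecutive sub-intervals. Membership of such a consolidation in $A_k^M$ follows from the closure of the associativity datum under condensing consecutive associable sub-blocks, which is implicit in Definition 3.6 via the $\sf{PAS}$ description (Proposition 3.11) and Chermak's axiom (4); combined with axioms (2) and (3), this completes the verification.
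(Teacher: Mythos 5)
Your proof is correct and follows essentially the same route as the paper's: uniqueness and the ``only if'' direction via full associability of the spine of $\mathfrak{D}^n$, and the ``if'' direction by defining $f(m_{i,j})$ as the product of the corresponding block $g_i\cdots g_{j-1}$ and checking compatibility with the partial multiplication. The one place you go beyond the paper is the final verification that $f$ preserves associativity data --- which the paper dismisses as immediate --- and your appeal to closure of the datum under condensing consecutive blocks (Chermak's axiom (4) under the equivalence with $\sf{PAS}$) is exactly the property needed there.
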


	\begin{proof}
		Firstly, if $f$ is a morphism in $\sf{Mag}^{\on{ad}}$, then the sequence $(f(m_{0,1}),f(m_{1,2}),\ldots,f(m_{n-1,n}))$ necessarily lies in $A_M$, since $(m_{0,1},\ldots,m_{n-1,n})$ lies in $F_n(M)$. Further, it is immediate that $f$ is determined by its values $f(m_{i,i+1})$, since the fact that $f$ is a partial magma morphism and $(m_{0,1},\ldots,m_{n-1,n})$ is fully associable implies that 
		\[
		f(m_{i,j})=f(m_{i,i+1})\cdot f(m_{i+1,i+2})\cdots f(m_{j-1,j}). 
		\]
		
		On the other hand, suppose we are given an sequence
		\[
		(g(m_{0,1}),g(m_{1,2}),\ldots,g(m_{n-1,n}))\in A_M.
		\]
		 Then define 
		\[
		g(m_{i,j}):=g(m_{i,i+1})\cdot g(m_{i+1,i+2})\cdots g(m_{j-1,j})
		\]
		for $1\leq i+1<j\leq n$ (this is well-defined, as the chosen tuple is fully associable). Then 
		\[
		\begin{aligned}
			g(m_{i,j}\cdot m_{j,k})&= (g(m_{i,i+1})\cdot g(m_{i+1,i+2})\cdots g(m_{j-1,j}))\cdot (g(m_{j,j+1})\cdot g(m_{j+1,j+2})\cdots g(m_{k-1,k}))\\
			& =g(m_{i,i+1})\cdot g(m_{i+1,i+2})\cdots g(m_{j-1,j})\cdot g(m_{j,j+1})\cdot g(m_{j+1,j+2})\cdots g(m_{k-1,k})\\
			&= g(m_{i,k})
		\end{aligned}
		\]
		by full associability. Thus, $g$ is a morphism of partial unital magmas. The fact that it preserves associativity data is immediate from the assumption that $(g(m_{0,1}),\ldots,g(m_{n-1,n}))\in A_M$. 
	\end{proof}

	\begin{corollary}\label{cor:Segal_nerve}
		For a partial unital magma with a $(M,\cdot,1)$, we have $N(M)_k\cong F_k(M)\subset M^{\times k}$. Under this identification, the face and degeneracy maps are given by
		\[
		d_i(m_1,\ldots, m_k)=\begin{cases}
			(m_2,\ldots, m_k) & i=0 \\
			(m_1,\ldots,m_{i}\cdot m_{i+1},\ldots,m_k) & 0<i<k\\
			(m_1,\ldots,m_{k-1}) & i=k
		\end{cases}
		\]
		and 
		\[
		s_i(m_1,\ldots,m_k)= (m_1,\ldots,m_i,1,m_{i+1},\ldots,m_k). 
		\]
	\end{corollary}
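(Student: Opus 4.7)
My plan is to reduce the corollary directly to Lemma 3.5 (the preceding lemma characterizing $n$-composable nerves) and then trace through the cosimplicial structure of $\mathfrak{D}^\bullet$. Viewing a partial unital magma $M$ as the object $M^\sharp \in \sf{Mag}^{\on{ad}}$ carrying the maximal associativity datum $A_n = F_n(M)$, we have
\[
N(M)_k = \sf{Mag}^{\on{ad}}((\mathfrak{D}^k)^\sharp, M^\sharp).
\]
Applying Lemma 3.5 with $(M,A_M) = M^\sharp$ yields the bijection $N(M)_k \cong F_k(M)$ that sends a morphism $g$ to the tuple $(g(m_{0,1}), g(m_{1,2}), \ldots, g(m_{k-1,k}))$. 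The inverse is given by the extension formula $g(m_{i,j}) = g(m_{i,i+1}) \cdot g(m_{i+1,i+2}) \cdots g(m_{j-1,j})$ recorded in the proof of that lemma.

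Next I would compute the face maps. By definition, $d_i$ is precomposition with the cosimplicial map $(d^i)_* : \mathfrak{D}^{k-1} \to \mathfrak{D}^k$ induced by the coface $d^i : [k-1] \to [k]$ skipping $i$, so $(d^i)_*(m_{a,b}) = m_{d^i(a), d^i(b)}$. Under the identification with $F_\bullet(M)$, the $j$th entry of $d_i(g)$ is $g(m_{d^i(j-1), d^i(j)})$. A case split on the position of $j$ relative to $i$ gives three subcases: for $j < i$ both indices are unchanged and the entry is $m_j$; for $j > i$ both indices shift by one and the entry is $m_{j+1}$; for $j = i$ the interval $[i-1, i+1]$ spans two consecutive elementary edges, and by the extension formula the entry becomes $g(m_{i-1,i+1}) = m_i \cdot m_{i+1}$. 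This reproduces the interior formula, while the boundary cases $i = 0$ and $i = k$ simply drop the first or last entry, since $d^0$ (resp.\ $d^k$) induces a pure index shift on all elementary edges.

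The degeneracy computation is analogous: $s_i$ is precomposition with $(s^i)_*$, where $s^i : [k+1] \to [k]$ doubles $i$. At position $j = i+1$ the pair $(s^i(j-1), s^i(j)) = (i,i)$ is degenerate, so by the convention $m_{i,i} = 1$ the new entry is the unit $1$; at all other positions $s^i$ acts as either the identity or an index shift by $-1$, producing the unshifted or shifted original entry. This yields
\[
s_i(m_1,\ldots,m_k) = (m_1,\ldots,m_i, 1, m_{i+1},\ldots,m_k).
\]
There is no real obstacle here: the content is bookkeeping in the explicit cosimplicial structure of $\mathfrak{D}^\bullet$, with Lemma 3.5 doing the nontrivial work of ensuring the identification with $F_k(M)$ and providing the extension formula used to rewrite the interior face as a binary product.
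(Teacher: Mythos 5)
Your proposal is correct and matches the intended derivation: the paper states this as an immediate corollary of the preceding lemma (applied with the maximal associativity datum $A_n = F_n(M)$, so that $N(M)_k \cong F_k(M)$), with the face and degeneracy formulas read off from the cosimplicial structure $\phi_*(m_{i,j}) = m_{\phi(i),\phi(j)}$ and the convention $m_{i,i}=1$, exactly as in your case analysis. Your index bookkeeping for $d^i$ and $s^i$ is accurate, and the use of the extension formula $g(m_{i-1,i+1}) = m_i \cdot m_{i+1}$ for the interior face is the right way to invoke the lemma's content.
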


	\begin{remark}
		The characterization of the nerve given in {Corollary} \ref{cor:Segal_nerve} is precisely the nerve for partial monoids given in \cite{Segal}, and generalizes with the nerve for partial groups given in the proof of \cite[Theorem 4.8]{Gonzales}.
	\end{remark}
	
	\begin{proposition}\label{prop:ess_im_magad}
		The nerve 
		\[
		\begin{tikzcd}
			N:&[-3em] \sf{Mag}^{\on{ad}} \arrow[r] & \Set_\Delta
		\end{tikzcd}
		\]
		is fully faithful, and its essential image consists of those simplicial sets which are spiny and reduced. 
	\end{proposition}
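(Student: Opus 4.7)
The plan is to prove the theorem in three stages: full faithfulness of $N$, the easy inclusion that the image is spiny and reduced, and essential surjectivity onto that class.

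For full faithfulness, I would invoke Corollary \ref{cor:Segal_nerve} to identify $N(M,A)_k$ with $A_k\subset M^{\times k}$ for $k\geq 2$, with $N(M,A)_1=M$ and $N(M,A)_0=\{\ast\}$ (since $\mathfrak{D}^0$ has no non-identity generators). Naturality of a simplicial map $g\colon N(M_1,A_{M_1})\to N(M_2,A_{M_2})$ with respect to the iterated outer face maps producing the spine forces $g_k$ to be the restriction of $g_1^{\times k}$, so $g$ is determined by $g_1$. The remaining simplicial constraints then translate cleanly: $g_1$ preserves units (from $g_1\circ s_0=s_0\circ g_0$), preserves partial products of pairs (from $d_1$-compatibility in degree $2$), and satisfies $g_1^{\times k}(A^{M_1}_k)\subset A^{M_2}_k$ (since $g_k$ is defined at all). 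These are exactly the axioms of a morphism in $\sf{Mag}^{\on{ad}}$, so every such morphism determines, and is determined by, a simplicial map between nerves.

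The inclusion of the image in spiny reduced simplicial sets is quick: reducedness is immediate from $N(M,A)_0=\{\ast\}$, while spinyness follows because the $1$-Segal map $N(M,A)_k\to M^{\times k}$ is nothing but the tautological inclusion $A_k\hookrightarrow F_k(M)\subset M^{\times k}$, which is injective.

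For essential surjectivity, given a spiny reduced $X$ with unique vertex $x_0$, I would construct $(M,A)$ by setting $M:=X_1$ and $1:=s_0(x_0)$, defining $A_k:=\mathrm{Im}(X_k\to M^{\times k})$ for $k\geq 2$ (injective by spinyness), and taking the partial multiplication $m_1\cdot m_2:=d_1(\sigma)$ for the unique $\sigma\in X_2$ with spine $(m_1,m_2)$. The technical heart of the argument is to verify $A_k\subset F_k(M)$, i.e., that a tuple arising as the spine of a single $n$-simplex $\sigma$ is fully associable. I would proceed by induction on $n$: iterating outer face maps produces, for each $0\leq i<j\leq n$, a $(j-i)$-simplex on vertices $\{i,\ldots,j\}$ with spine $(m_{i+1},\ldots,m_j)$, and further iterated $d_1$-type faces evaluate the corresponding bracketings. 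Equality between different bracketings then reduces to the simplicial identity $d_id_j=d_{j-1}d_i$ for $i<j$. Condition (2) of Definition \ref{defn:assoc_datum} corresponds to outer faces, and condition (3) to the degeneracies $s_i$ inserting units. Finally, $N(M,A)_k=A_k\cong X_k$ by spinyness, and this bijection is simplicial because in a spiny reduced simplicial set all face and degeneracy maps are determined by their action on spines, which matches the formulas of Corollary \ref{cor:Segal_nerve}.

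The main obstacle will be verifying $A_k\subset F_k(M)$: everything else is straightforward bookkeeping once the identification $N(M,A)_k\cong A_k$ is in hand. The key is the combinatorial dictionary between binary bracketings of $(m_1,\ldots,m_n)$ and sequences of face maps applied to $\sigma$, together with systematic use of the simplicial identities to compare the outputs of two different bracketings.
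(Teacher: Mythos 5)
Your proposal is correct and follows essentially the same route as the paper: identify $N(M,A)_k$ with $A_k$ via Lemma \ref{lem:Nerve_n_composable}, read off full faithfulness from compatibility with the spine inclusions, face maps, and $s_0$, and for essential surjectivity reconstruct the partial unital magma from $X_1$, $X_2$ and declare $A_n$ to be (the image of) $X_n$, checking full associability and the axioms of an associativity datum from the formulas for the face and degeneracy maps on spines. Your remark that full associability of a spine reduces to the simplicial identities (all bracketings evaluate to the long edge of the simplex) just makes explicit a step the paper treats as immediate.
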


	\begin{proof}
		The content of this statement is already implicit in the proof of \cite[Theorem 4.8]{Gonzales}, but we spell out the details for completeness. 
		
		Since a morphism $f:(M,A_M)\to (L,A_L)$ in $\sf{Mag}^{\on{ad}}$ is uniquely determined by the map of sets $f:M\to L$, and this map is identified with $N(f)_1:N(M)_1\to N(L)_1$, it is immediate that this functor is faithful. 
		
		Now suppose that $f:N(M)\to N(L)$ is a map between nerves of elements of $\sf{Mag}^{\on{ad}}$. Throughout, we implicitly use the description of the nerve from {Corollary} \ref{cor:Segal_nerve}. Note that, since $f$ commutes with the face maps $d_0$ and $d_n$, applying $f_k$ to a tuple $(m_1,\ldots,m_k)\in A_M$ must yield the tuple $(f_1(m_1),\ldots, f_1(m_k)\in A_L$. Similarly, since $f$ commutes with the face maps $d_i$ for $0<i<k$, we see that $f_k(m_1\cdot m_2\cdots m_k)$ must be equal to $f_1(m_1)\cdot f_1(m_2)\cdots f_k(m_k)$. Since $f$ commutes with the degeneracy map $s_0$,  $f_1(1_M)=1_L$, and thus $f_1$ is a morphism of partial unital magmas, and $f$ is its nerve. It is immediate that $f$ preserves associability data, completing the proof of faithfulness. 
		
		To characterize the essential image, note that, by construction, the nerve of a partial unital magma is spiny and reduced. 
		
		On the other hand, suppose that $X$ is a spiny, reduced simplicial set. Define a partial unital magma with underlying set $X_1$, multiplication 
		\[
		\begin{tikzcd}
			X_1\times X_1 & X_2 \arrow[l,hookrightarrow,"{(d_2,d_0)}"'] \arrow[r,"d_1"] & X_1 
		\end{tikzcd}
		\]
		and unit 
		\[
		\begin{tikzcd}
			\ast \cong X_0 \arrow[r,"s_0"] & X_1.
		\end{tikzcd}
		\]
		Note that defining the partial multiplication makes use of the fact that $X$ is spiny and reduced. We then note that  
		  
		Again making use of spiny-ness and reduced-ness, we identify $X_n$ with a subset of $X_1^{\times n}$. The 1-Segal map $X_n\to X_1\times \cdots\times X_1$ identifies $X_n$ with a set of $n$-tuples in $X_n$. For $0\leq i\leq n$, the commutativity of the diagrams 
		{\footnotesize
			\[
			\begin{tikzcd}
				X_n\arrow[r,equal]\arrow[d]  &[3em] X_n\arrow[r,"d_i"]\arrow[d] &[3em] X_{n-1}\arrow[d]\\
				X_1\times\cdots\times\underbrace{X_1}_{\{i-1,i\}}\times \underbrace{X_1}_{\{i,i+1\}}\times \cdots X_1 & X_1\times\cdots\times \underbrace{X_2}_{\{i-1,i,i\}}\times \cdots X_1\arrow[l,"{(\id,\ldots,(d_2,d_0),\ldots,\id)}"]\arrow[r,"{(\id,\ldots,d_1,\ldots,\id)}"'] & X_1\times\cdots\times\underbrace{X_1}_{\{i,i+2\}}\times \cdots X_1
			\end{tikzcd}
			\]
		}
		and 
		\[
		\begin{tikzcd}
			X_n \arrow[r,"s_i"]\arrow[d] &[3em] X_{n+1}\arrow[d] \\
			X_1\times \cdots \times \underbrace{\ast}_{i}\times \cdots \times X_1\arrow[r,"{(\id,\ldots,s_0,\ldots,\id)}"'] & X_1 \times\cdots \times X_1   
		\end{tikzcd}
		\]
		shows that, under this identification 
		\[
		d_i(x_1,\ldots, x_n) =\begin{cases}
			(x_2,\ldots, x_n) & i=0 \\
			(x_1,\ldots, x_i\cdot x_{i+1},\ldots, x_n) & 0<i<n\\
			(x_1,\ldots,x_{n-1}) & i=n 
		\end{cases}
		\]
		and 
		\[
		s_i(x_1,\ldots,x_n)=(x_1,\ldots, x_{i},1,x_{i+1},\ldots, x_n). 
		\]  
		This characterization of the face maps immediately implies that setting $A_n=X_n$ for $n\geq 2$ means that $A_n\subset F_n(X_1)$, and that condition (2) in {Definition} \ref{defn:assoc_datum} holds. The characterization of the degeneracy maps implies that condition (3) holds. Thus, we have constructed a weak partial monoid $(X_1,\cdot,1)$ with an associativity datum $\coprod_{n\geq 2} X_n$ whose nerve is isomorphic to $X$, completing the proof. 
	\end{proof}
	
\begin{remark}
{
Broto and Gonzales in \cite{broto2021extension} define partial monoids as spiny reduced simplicial sets and assert that this definition is equivalent to Chermak's via the nerve functor. Propositions \ref{prop:Magad equivalent to PAS} and \ref{prop:ess_im_magad} offer an alternative proof for this equivalence.
}
\end{remark}

	\subsection{Associativity and the 2-Segal condition}
	
	We now briefly provide a characterization of the essential image of $\sf{PM}$ under the nerve {functor}.
	
	\begin{proposition}
			The nerve 
			\[
			\begin{tikzcd}
				N:&[-3em] \sf{PM} \arrow[r] & \Set_\Delta
			\end{tikzcd}
			\]
			is fully faithful, and its essential image consists of those simplicial sets which are spiny, reduced, and 2-Segal. 
	\end{proposition}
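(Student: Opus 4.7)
The plan is to reduce to Proposition \ref{prop:ess_im_magad} via the fully faithful composite $\sf{PM} \xrightarrow{\Psi} \sf{WPM} \xrightarrow{\Phi} \sf{Mag} \xrightarrow{(-)^\sharp} \sf{Mag}^{\on{ad}}$ provided by Proposition \ref{prop:max_ass_data_FF}. Fully faithfulness of $N \colon \sf{PM} \to \Set_\Delta$ is then automatic as a composite of fully faithful functors, and by Corollary \ref{cor:Segal_nerve} combined with Segal's associativity, $N(M)_n = F_n(M) = M_n$ for any $M \in \sf{PM}$, and $N(M)$ is spiny and reduced by Proposition \ref{prop:ess_im_magad}. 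To verify the 2-Segal condition on $N(M)$, fix a triangulation $\scr{T}$ of $P_{n+1}$. Such a triangulation encodes a binary bracketing of an $n$-fold product, with each internal edge $\{i,j\}$ labelling the subproduct $a_i \cdots a_j$. An element of $\on{MS}(\Delta^{\scr{T}}, N(M))$ is a compatible family of $2$-simplices filling the triangles of $\scr{T}$; under $N(M)_2 \cong M_2$ this corresponds precisely to an $n$-tuple in $M$ whose bracketed product is defined. Segal's associativity makes multiplicability independent of bracketing, so this set agrees with $M_n = N(M)_n$.

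Conversely, let $X$ be spiny, reduced, and 2-Segal. By Proposition \ref{prop:ess_im_magad}, $X \cong N(M, A)$ for a unique $(M, A) \in \sf{Mag}^{\on{ad}}$ with $M = X_1$ and $A_n = X_n$. Applying 2-Segal at $n = 3$ to the triangulations $\squaredivright$ and $\squarediv$ of $P_4$ identifies $X_3$ respectively with $\{(a,b,c) \mid (a \cdot b) \cdot c \text{ is defined}\}$ and with $\{(a,b,c) \mid a \cdot (b \cdot c) \text{ is defined}\}$; moreover, the edge $\{0, 3\}$ of the unique corresponding $3$-simplex is identified with $(a \cdot b) \cdot c$ in the first case and with $a \cdot (b \cdot c)$ in the second. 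These two descriptions of $X_3$ must agree, so $M$ satisfies Segal's associativity. Iterating 2-Segal along the fan triangulation based at vertex $0$ of $P_{n+1}$ further identifies $X_n = A_n$ with the set of tuples for which the left-parenthesized product is defined in $M$, which by the established associativity coincides with $F_n(M) = M_n$. Hence $(M, A) = (M)^\sharp$ lies in the image of $\sf{PM} \hookrightarrow \sf{Mag}^{\on{ad}}$, so $X \cong N(M)$ as desired.

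The main technical step is to make explicit the classical bijection between triangulations of $P_{n+1}$ and binary bracketings of an $n$-fold product, and to match this combinatorial structure with the iterated-pullback presentation of $\on{MS}(\Delta^{\scr{T}}, N(M))$. Once this bookkeeping is in place, the internal edges of $\scr{T}$ correspond to the partial subproducts indexed by the bracketing, and both directions of the essential image characterization reduce to the identification $N(M)_2 \cong M_2$ combined with Segal's associativity.
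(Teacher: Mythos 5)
Your proposal is correct and follows essentially the same route as the paper: full faithfulness is inherited from the embedding $\sf{PM}\hookrightarrow\sf{Mag}^{\on{ad}}$ together with Proposition \ref{prop:ess_im_magad}, associativity is extracted from the lowest-dimensional $2$-Segal condition via the two triangulations of $P_4$, and maximality of the associativity datum comes from the higher $2$-Segal isomorphisms (you use the fan triangulation plus the already-established associativity, where the paper invokes all triangulations --- an immaterial difference). The only substantive addition is that you prove directly, via the triangulation--bracketing correspondence, that nerves of partial monoids are $2$-Segal, a fact the paper simply cites from \cite[Example 2.1]{BOORS}; your argument for it is sound.
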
	
	
	\begin{proof}
		The fact that the nerve is fully faithful follows from the fact that the functor $\sf{PM}\to \sf{Mag}^{\on{ad}}$ is fully faithful together with {Proposition} \ref{prop:ess_im_magad}. For the fact that the nerve of a partial monoid is 2-Segal, see, e.g., \cite[Example 2.1]{BOORS}. 
		
		It thus only remains for us to show that, when $X$ is spiny, reduced and 2-Segal, the construction of the partial magma $(X_1,\cdot,1)$ in the proof of  {Proposition} \ref{prop:ess_im_magad} yields an \emph{associative} partial magma with maximal associativity data $A_n=F_n(X)$. However, in that construction, the lowest-dimensional $2$-Segal conditions require that the maps 
		\[
		\begin{tikzcd}
			X_2\times^{\squareslashright} X_2 & X_3 \arrow[l] \arrow[r] & X_2\times^{\squareslashleft} X_2
		\end{tikzcd}
		\]
		are isomorphisms, where the $\squareslashleft$ superscripts indicate which triangulation the $2$-Segal pullback corresponds to. Using the characterization of the face and degeneracy maps, this immediately implies that $a\cdot (b\cdot c)$ is multiplicable if and only if $(a\cdot b)\cdot c)$ is, and then both are equal. Thus, the partial unital magma defined above is a partial monoid. 
		
		{Let} $\Delta_{\on{inj}}\subset \Delta$ denote the wide subcategory which includes precisely the injective maps.
		Finally, the 2-Segal isomorphisms
\[
\begin{tikzcd}
X_n \arrow[r] & \lim_{\Delta^{\on{inj}}/\Delta^{\scr{T}}} X_i 
\end{tikzcd}
\]		
		
		for \emph{every} triangulation $\scr{T}$ of $P_{n+1}$ shows that the tuples representing elements of $X_n$ are precisely those which are fully associable, proving the desired maximality. 
	\end{proof}


	\subsection{Weak associativity and weak 2-Segal conditions}
	
	We now define conditions on a simplicial set corresponding to \emph{weak} associativity. These make use of the same $2$-Segal maps --- effectively because of the correspondence between triangulations of $P_{n+1}$ and binary planar rooted trees (parenthesizations) --- but are strictly weaker, as we will see. 
	
	\begin{figure}[htb] 
	  \begin{center}
	   \includegraphics[width=0.9\textwidth]{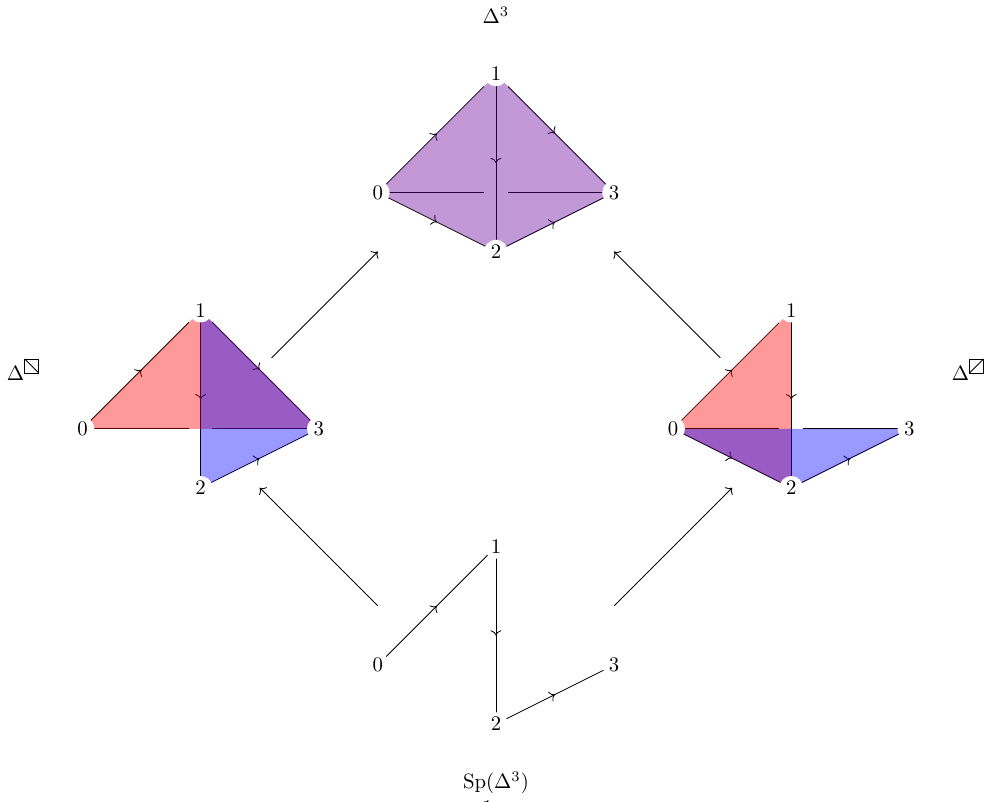}
	  \end{center}
	\caption{{The poset $\scr{I}_3$.}}
	\label{fig:diagram}
	\end{figure}

	\begin{definition}
		Let $n\geq 3$. We denote by $\scr{I}_n$ the full subposet of the poset of simplicial subsets of $\Delta^n$ on the objects $\on{Sp}^n$, $\Delta^n$ itself, and $\Delta^{\scr{T}}$ for any triangulation of {$P_{n+1}$}. For $n\geq 3$ and $X\in \Set_\Delta$, let  
		\[
		\begin{tikzcd}
			\on{MS}(X,n): &[-3em] \scr{I}_n^\op \arrow[r] & \Set 
		\end{tikzcd}
		\] 
		be the functor which takes a simplicial subset of $\Delta^n$ to the corresponding membrane set of \cite[\S 2.2]{DK}.
		We call $X\in \Set_\Delta$ \emph{weakly 2-Segal} if, for every $n\geq 3$, the diagram $M(X,n)$ is a limit diagram.
		
		We denote by $\sf{W}2\sf{S}_\Delta\subset \Set_\Delta$ the full subcategory of weakly 2-Segal sets, and by $\sf{W}2\sf{S}^{\on{sp}}_\Delta$ the full subcategory on the spiny weakly 2-Segal sets. Finally, we denote by $\sf{W}2\sf{S}^{\on{sp},\ast}_\Delta$ the full subcategory on the reduced spiny weakly 2-Segal sets. 
	\end{definition}

	\begin{theorem}\label{thm:nerve_characterization}
		The nerve is an equivalence of categories 
		\[
		\begin{tikzcd}
			N :&[-3em] \sf{WPM} \arrow[r,"\simeq"] & \sf{W2S}^{\on{sp},\ast}_\Delta.
		\end{tikzcd}
		\]
	\end{theorem}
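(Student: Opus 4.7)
The approach is to combine the full faithfulness of $\Phi: \sf{WPM} \to \sf{Mag}$ and $(-)^\sharp: \sf{Mag} \to \sf{Mag}^{\on{ad}}$ from Proposition \ref{prop:max_ass_data_FF} with the characterization of $N: \sf{Mag}^{\on{ad}} \hookrightarrow \Set_\Delta$ onto spiny reduced simplicial sets from Proposition \ref{prop:ess_im_magad}. This makes $N: \sf{WPM} \to \Set_\Delta$ fully faithful with essential image contained in spiny reduced simplicial sets, and it remains to identify the weakly $2$-Segal condition as the precise cutoff for weak partial monoids among them.

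For the direction that the nerve of any weak partial monoid is weakly $2$-Segal: fix $M \in \sf{WPM}$, so that $N(M)_n = F_n(M)$ by Corollary \ref{cor:Segal_nerve}. The plan is to show the limit $L_n$ of $\on{MS}(-, N(M))$ on $(\scr{I}_n \setminus \{\Delta^n\})^{\op}$ is naturally identified with the multiplicable $n$-tuples of Definition \ref{def:multiplicable}. Spinyness and reducedness give $\on{MS}(\on{Sp}^n, N(M)) \cong M^{\times n}$, and spinyness of $N(M)$ forces a filling of a spine $\underline{b}$ to $\on{MS}(\Delta^{\scr{T}}, N(M))$ to exist and be unique precisely when every sub-product dictated by $\scr{T}$ is defined. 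Ranging over all triangulations --- equivalently, over all binary bracketings of $n$ factors --- identifies $L_n$ with the multiplicable tuples, and under weak associativity these coincide with the fully associable tuples $F_n(M)$, because any sub-bracketing of a defined bracketing is itself defined.

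Conversely, let $X$ be spiny, reduced and weakly $2$-Segal; Proposition \ref{prop:ess_im_magad} furnishes $(X_1, \cdot, 1)$ with associativity datum $A_n = X_n$ and $X \cong N(X_1, A)$. The $n=3$ weak $2$-Segal condition gives weak associativity in degree three: if $(a,b,c)$ has both $(a \cdot b) \cdot c$ and $a \cdot (b \cdot c)$ defined, the two triangulation fillings of $\underline{b}$ form an element of $L_3$ that lifts uniquely to $\sigma \in X_3$, and the $\{0,3\}$-edge of $\sigma$ realizes both bracketed products, forcing their equality. Higher weak associativity then follows by connecting any two bracketings via elementary $(a \cdot b) \cdot c \leftrightsquigarrow a \cdot (b \cdot c)$ moves applied to subtrees whose three-element products are defined by multiplicability of the ambient bracketing. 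The argument of the preceding paragraph now identifies $L_n$ with $F_n(X_1)$, and the weak $2$-Segal bijection $X_n \cong L_n$ yields $A_n = F_n(X_1)$; hence $(X_1, \cdot, 1)$ is a weak partial monoid and $X$ is its nerve.

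The main technical point, common to both directions, is the natural identification of $\on{MS}(\Delta^{\scr{T}}, N(M))$ with the tuples whose sub-multiplications prescribed by $\scr{T}$ are defined. This rests on the bijection between triangulations of $P_{n+1}$ and binary bracketings of $n$ factors, combined with an inductive peel-off of boundary triangles of $\scr{T}$ (each such boundary triangle corresponds to an innermost binary product). Once this identification is in place, the remainder of the argument reduces cleanly to the observation that, under weak associativity, multiplicability of a tuple is equivalent to its full associability.
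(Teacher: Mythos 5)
Your proposal is correct and follows essentially the same strategy as the paper: full faithfulness via the chain $\sf{WPM}\to\sf{Mag}\to\sf{Mag}^{\on{ad}}\hookrightarrow\Set_\Delta$, the triangulation--bracketing correspondence to handle the membrane sets, and the reduction of the converse to the lowest weak $2$-Segal condition. The only (cosmetic) difference is packaging: the paper proves that $\mathfrak{D}^\bullet$ restricted to $\underline{\scr{I}}_n$ is a colimit cocone in $\sf{WPM}$ and invokes continuity of $\Hom$, whereas you compute the limit $L_n$ of the membrane-set diagram directly and identify it with $F_n(M)$ --- a dual formulation of the same computation.
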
 
	
	We first show that the nerve factors through $\sf{W2S}^{\on{sp},\ast}_\Delta$. To do this, we denote by 
	\[
	\begin{tikzcd}
		C_{\mathfrak{D}}^n:&[-3em]  \scr{I}_n \arrow[r] & \sf{WPM}  
	\end{tikzcd}
	\] 
	the functor which sends each subset $S\subset \Delta^n$ to the colimit 
	\[
	\colim_{\Delta^{\on{inj}}/S} \mathfrak{D}^\bullet
	\]
	in $\sf{WPM}$. 
	
	\begin{proposition}\label{prop:nerve_W_2Seg}
		For every $n\geq 3$, the functor $C_{\mathfrak{D}}^n$ is a colimit cocone. 
	\end{proposition}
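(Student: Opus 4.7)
The plan is to verify universality directly. Cocone-ness is immediate from the functoriality of $\mathfrak{D}^\bullet$, and the content of the statement is that the canonical cocone at the terminal object $\Delta^n \in \scr{I}_n$ is universal. Accordingly, fix a weak partial monoid $L$ equipped with cocone data: a morphism $s\colon C_{\mathfrak{D}}^n(\on{Sp}^n) \to L$ and morphisms $t_{\scr{T}}\colon C_{\mathfrak{D}}^n(\Delta^{\scr{T}}) \to L$ for each triangulation $\scr{T}$ of $P_{n+1}$, all compatible with the inclusions $\on{Sp}^n \hookrightarrow \Delta^{\scr{T}}$. Writing $a_i := s(m_{i,i+1})$, the goal is to construct a unique morphism $\phi\colon \mathfrak{D}^n \to L$ of weak partial monoids through which all of the cocone maps factor.

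Define $\phi(m_{i,i+1}) := a_i$ and, for $j - i \geq 2$, set $\phi(m_{i,j}) := t_{\scr{T}}(m_{i,j})$ for any triangulation $\scr{T}$ of $P_{n+1}$ containing $(i,j)$ as an edge (for instance, the fan from vertex $i$). Iteratively applying the triangle relations of $\scr{T}$ expresses $t_{\scr{T}}(m_{i,j})$ as a specific binary bracketing of $a_i \cdot a_{i+1} \cdots a_{j-1}$ determined by $\scr{T}|_{[i,j]}$, the restriction of $\scr{T}$ to the subpolygon on vertices $\{i, i+1, \ldots, j\}$; this bracketing is defined in $L$ precisely because $t_{\scr{T}}$ exists. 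The main obstacle is well-definedness: given a second triangulation $\scr{T}'$ also containing $(i,j)$, the restrictions $\scr{T}|_{[i,j]}$ and $\scr{T}'|_{[i,j]}$ yield two triangulations of the subpolygon, hence two bracketings of $a_i \cdots a_{j-1}$ both defined in $L$, and their values must be shown to agree. Any two triangulations of the subpolygon are connected by a sequence of bistellar flips, each corresponding to the 3-fold re-bracketing $(xy)z \leftrightsquigarrow x(yz)$ of a subproduct. Each intermediate triangulation of the subpolygon extends to a triangulation of $P_{n+1}$ by triangulating the complementary polygon on $\{0,\ldots,i\} \cup \{j, \ldots, n\}$ arbitrarily, so the corresponding intermediate bracketing is itself defined in $L$ via the associated cocone map. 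The 3-fold weak associativity of $L$ then forces equality of values across each flip, and iterating yields $t_{\scr{T}}(m_{i,j}) = t_{\scr{T}'}(m_{i,j})$.

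It remains to verify that $\phi$ is a morphism of weak partial monoids and that it is unique. For a multiplicable pair $(m_{i,j}, m_{j,k}) \in \mathfrak{D}^n$ with $i < j < k$, select a triangulation $\scr{T}$ of $P_{n+1}$ having $\{i,j,k\}$ as one of its triangles (built by triangulating the subpolygons on $\{i,\ldots,j\}$, $\{j,\ldots,k\}$, and on $\{0,\ldots,i\} \cup \{k,\ldots,n\}$ arbitrarily, and inserting the triangle $\{i,j,k\}$). The relation $m_{i,j} \cdot m_{j,k} = m_{i,k}$ holding in $C_{\mathfrak{D}}^n(\Delta^{\scr{T}})$ transports via $t_{\scr{T}}$ to $\phi(m_{i,j}) \cdot \phi(m_{j,k}) = \phi(m_{i,k})$ in $L$. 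Compatibility with the cocone is built into the construction, while uniqueness is forced because any such $\phi$ must satisfy $\phi(m_{i,j}) = t_{\scr{T}}(m_{i,j})$ for every triangulation $\scr{T}$ containing $(i,j)$.
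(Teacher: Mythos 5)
Your proof is correct and follows essentially the same route as the paper's: both arguments exploit the correspondence between triangulations of $P_{n+1}$ and bracketings to show that every bracketing of the spine tuple $(a_0,\ldots,a_{n-1})$ is witnessed by a cocone leg $t_{\scr{T}}$, and then invoke weak associativity to pin down a unique morphism $\mathfrak{D}^n\to L$. The only organizational difference is that you establish well-definedness of $\phi(m_{i,j})$ by a bistellar-flip argument, whereas the paper reduces existence to $n$-composability of the spine via Lemma \ref{lem:Nerve_n_composable} and applies weak associativity in one step (all bracketings of the subtuple are defined, so all agree); your version has the minor advantage of making the cocone compatibility, which the paper leaves as ``not hard to check,'' explicit.
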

	
	\begin{proof}
		Let $(M,\cdot,1)$ be a partial monoid, and suppose we are given morphisms of weak partial monoids 
		\[
		\begin{tikzcd}
			\mu_S:&[-3em] \colim_{\Delta^{\on{inj}}/S} \mathfrak{D}^\bullet \arrow[r] & M 
		\end{tikzcd}
		\]
		for $S\in \scr{I}_n$ other than $\Delta^n$ itself. We will construct a morphism of weak partial monoids $\mu:\mathfrak{D}^n\to M$ defining a morphism of cones, and show that it is unique. 
		
		Since the diagram 
		\[
		\begin{tikzcd}[column sep=huge,row sep=large]
			\mathfrak{D}^n \arrow[r,"\mu"] & M \\
			\mathfrak{D}^1\amalg\cdots \amalg \mathfrak{D}^1\arrow[ur,"\mu_{\on{Sp}(n)}"'] \arrow[u,"\on{spine}"]
		\end{tikzcd}
		\]
		must commute, we see that $\mu(m_{i,i+1})=\mu_{\on{Sp}(n)}|_{\mathfrak{D}^{i,i+1}}(m_{0,1})$, and thus, by {Lemma}\ref{lem:Nerve_n_composable}, uniqueness will follow by showing that the sequence 
		\[
		\left(\mu_{\on{Sp}(n)}|_{\mathfrak{D}^{0,1}}(m_{0,1}),\ldots, \mu_{\on{Sp}(n)}|_{\mathfrak{D}^{n-1,n}}(m_{0,1})\right)
		\] 
		is $n$-composable. To simplify notation, for $S\subset \Delta^n$ in $\scr{I}_n$, we will write $\mu_S(m_{i,j})$ for the unique element of $C_{\mathfrak{D}}^n(S)$ which maps to $m_{i,j}$ in $\mathfrak{D}^n$. 
		
		Firstly, we note that the $\mu_{\on{Sp}^n}(m_{i,i+1})$ are pairwise composable in the appropriate sequence. This follows since, for any $0\leq i$ such that $i+2\leq n$, the triangle $\Delta^{i,i+1,i+2}$ lies in some triangulation $\scr{T}$ of $P_n$, and so 
		\[
		\mu_{\Delta^{\scr{T}}}(m_{i,i+1})\cdot \mu_{\Delta^{\scr{T}}}(m_{i+1,i+2})=\mu_{\Delta^{\scr{T}}}(m_{i,i+2}). 
		\]
		Since $\mu_{\Delta^{\scr{T}}}(m_{i,i+1})=\mu_{\on{Sp}^n}(m_{i,i+1})$ and $\mu_{\Delta^{\scr{T}}}(m_{i+1,i+2})=\mu_{\on{Sp}^n}(m_{i+1,i+2})$, this shows the desired pairwise composability. 
		
		Now suppose given a binary bracketing of 
		\[
		\left(\mu_{\on{Sp}^n}(m_{0,1}),\ldots, \mu_{\on{Sp}^n}(m_{n-1,n})\right). 
		\]
		There is a corresponding triangulation $\scr{T}$ of $P_n$ whose triangles are precisely the $\Delta^{i,j,k}$ where 
		\[
		\cdots((a)\cdot(b))\cdots 
		\] 
		is a part of the bracketing, $(a)$ is a bracketing of 
		\[
		\left(\mu_{\on{Sp}^n}(m_{i,i+1}),\ldots, \mu_{\on{Sp}^n}(m_{j-1,j})\right)
		\]
		and $(b)$ is a bracketing of 
		\[
		\left(\mu_{\on{Sp}^n}(m_{j,j+1}),\ldots, \mu_{\on{Sp}^n}(m_{k-1,k})\right).
		\]
		Since $\mu_{\Delta^{\scr{T}}}$ is a map of partial monoids, this shows that the original bracketing of 
		\[
		\left(\mu_{\on{Sp}^n}(m_{0,1}),\ldots, \mu_{\on{Sp}^n}(m_{n-1,n})\right)	
		\]
		is composable. Thus, the sequence of elements in $M$ is $n$-composable and determines a unique morphism $\mu:\mathfrak{D}^k\to M$. It is not hard to check that this map determines a cocone, and thus, the proposition is proven. 
	\end{proof}
	
	\begin{corollary}
		For any weak partial monoid $(M,\cdot,1)$, the nerve $N(M)$ is a reduced spiny weakly 2-Segal simplicial set. 
	\end{corollary}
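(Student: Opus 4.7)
The plan is to assemble the corollary directly from the two preceding results: Proposition \ref{prop:ess_im_magad} handles the ``reduced spiny'' part of the conclusion, while Proposition \ref{prop:nerve_W_2Seg} combined with a Yoneda/representability calculation handles the weak $2$-Segal part.

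First, since $\sf{WPM}$ embeds fully faithfully into $\sf{Mag}^{\on{ad}}$ via $(-)^\sharp \circ \Phi$, and since both the nerve functor on $\sf{WPM}$ and the nerve functor on $\sf{Mag}^{\on{ad}}$ are induced by the same cosimplicial object $\mathfrak{D}^\bullet$, the nerve $N(M)$ coincides with the nerve of $M$ viewed as an object of $\sf{Mag}^{\on{ad}}$. By Proposition \ref{prop:ess_im_magad}, this simplicial set is spiny and reduced.

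The content thus reduces to verifying weak $2$-Segality. Fix $n \geq 3$; I need to show that $\on{MS}(-, N(M))\colon \scr{I}_n^\op \to \Set$ is a limit cone. The strategy is to identify this membrane-set diagram with the diagram obtained by applying the representable functor $\sf{WPM}(-, M)$ to the cocone $C^n_{\mathfrak{D}}\colon \scr{I}_n \to \sf{WPM}$. Concretely, for any $S \in \scr{I}_n$, the simplicial subset $S \subset \Delta^n$ is canonically the colimit $\colim_{\Delta^{\on{inj}}/S} \Delta^\bullet$, so by Yoneda,
\[
\on{MS}(S, N(M)) = \Set_\Delta(S, N(M)) \cong \lim_{\Delta^{\on{inj}}/S} N(M)_\bullet \cong \lim_{\Delta^{\on{inj}}/S} \sf{WPM}(\mathfrak{D}^\bullet, M) \cong \sf{WPM}\bigl(C^n_{\mathfrak{D}}(S), M\bigr).
\]
These isomorphisms are natural in $S$, so they identify the membrane-set diagram $\on{MS}(-, N(M))$ with $\sf{WPM}(C^n_{\mathfrak{D}}(-), M)$.

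By Proposition \ref{prop:nerve_W_2Seg}, $C^n_{\mathfrak{D}}$ is a colimit cocone in $\sf{WPM}$. Since $\sf{WPM}(-, M)$ is a contravariant representable functor, it carries this colimit cocone to a limit cone in $\Set$. Under the identifications above, this is precisely the statement that $\on{MS}(-, N(M))$ is a limit diagram, establishing weak $2$-Segality. The main subtlety to be careful about is the first step, namely verifying that $S = \colim_{\Delta^{\on{inj}}/S} \Delta^\bullet$ holds for every $S \in \scr{I}_n$ (in particular for $\on{Sp}^n$ and for $\Delta^{\scr{T}}$); this is a standard fact, but it is where the restriction to \emph{injective} simplices enters and is essential for matching the indexing category of $C^n_{\mathfrak{D}}$.
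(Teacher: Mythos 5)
Your proposal is correct and takes essentially the same route as the paper: the weak $2$-Segal property is deduced exactly as in the paper by applying the continuity of hom functors to the colimit cocone of Proposition \ref{prop:nerve_W_2Seg} (you merely spell out the Yoneda identification of the membrane sets with $\sf{WPM}(C^n_{\mathfrak{D}}(-),M)$ that the paper leaves implicit). The only cosmetic difference is that the paper verifies reducedness and spininess by direct one-line computations rather than citing Proposition \ref{prop:ess_im_magad}, but the content is the same.
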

	
	\begin{proof}
		Since hom functors are continuous,  {Proposition} \ref{prop:nerve_W_2Seg} implies that $N(M)$ is weakly 2-Segal. It is immediate from the definitions that $M_0=\Hom(\{1\},M)$ is a singleton. Finally, the canonical map 
		\[
		\begin{tikzcd}[row sep=0em]
			M_k\arrow[r] & M_1\times \cdots \times M_1 \\
			(m_1,\ldots,m_k) \arrow[r,mapsto] & (m_1,\ldots,m_k)
		\end{tikzcd}
		\]
		is identified with the 1-Segal map of $N(M)_k$. Since this is manifestly injective, the corollary is proven. 
	\end{proof}
	
	\begin{proposition}\label{prop:Nerve_Ess_surj}
		Given $X\in \sf{W}2\sf{S}^{\on{sp},\ast}_\Delta$, there is a partial monoid $M$ such that $N(M)\cong X$. 
	\end{proposition}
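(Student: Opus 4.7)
The plan is to construct, from a reduced spiny weakly $2$-Segal set $X$, an underlying partial unital magma with associativity datum via Proposition \ref{prop:ess_im_magad}, promote it to a weak partial monoid by leveraging the weak $2$-Segal condition at $n=3$, and finally identify the nerve with $X$ using the full weak $2$-Segal condition. Since $X$ is spiny and reduced, Proposition \ref{prop:ess_im_magad} produces a partial unital magma $(X_1, \cdot, 1)$ with associativity datum $A_n = X_n \subset F_n(X_1)$ whose $\sf{Mag}^{\on{ad}}$-nerve is isomorphic to $X$, and furnishes the explicit formulas for face and degeneracy maps under the $1$-Segal embedding $X_n \hookrightarrow X_1^{\times n}$. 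It therefore suffices to verify weak associativity of $(X_1, \cdot, 1)$ and to identify $X_n$ with $F_n(X_1)$, so that viewing $M := (X_1, \cdot, 1)$ as a weak partial monoid we have $N(M) \cong X$.

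For weak associativity I apply the weak $2$-Segal condition at $n = 3$. The restricted diagram over $\scr{I}_3 \setminus \{\Delta^3\}$ consists of the spine $\on{Sp}^3$ together with the two triangulations $\Delta^{\scr{T}_1}, \Delta^{\scr{T}_2}$ of $P_4$, each including the spine. Unpacking the membrane sets under the $1$-Segal embedding, the limit of this diagram identifies $X_3$ with the set of triples $(a, b, c) \in X_1^{\times 3}$ such that \emph{both} $(a \cdot b) \cdot c$ and $a \cdot (b \cdot c)$ are defined. Given such a triple, the corresponding unique $\sigma \in X_3$ satisfies $d_1 \circ d_1\, \sigma = d_1 \circ d_2\, \sigma$ by the simplicial identity, and these two edges are precisely $(a \cdot b) \cdot c$ and $a \cdot (b \cdot c)$ realized as the long edges of $d_1 \sigma$ and $d_2 \sigma$ respectively. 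Thus the two bracketings agree, and by the lemma immediately following the definition of a weak partial monoid this triple-level condition suffices to establish weak associativity at all arities.

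To conclude that $N(M) \cong X$ as simplicial sets it now remains only to identify $X_n$ with $N(M)_n = F_n(M)$ under the $1$-Segal embedding, since the face and degeneracy formulas agree by Proposition \ref{prop:ess_im_magad}. The weak $2$-Segal condition at level $n$ identifies $X_n$ with the limit of the membrane sets of the spine and of all $\Delta^{\scr{T}}$. Using the bijection between triangulations of $P_{n+1}$ and binary bracketings of $n$, this limit unpacks to the set of tuples $(a_1, \ldots, a_n) \in X_1^{\times n}$ for which every binary bracketing is defined in $M$. To match this against $F_n(M)$, I observe that any bracketing of a contiguous sub-tuple $(a_i, \ldots, a_j)$ extends to a bracketing of the full tuple by prepending and appending arbitrary bracketings of the remaining prefix and suffix; the full bracketing is defined by hypothesis, forcing the sub-bracketing to be defined. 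Combined with weak associativity, which upgrades ``multiplicable'' to ``associable'' on every sub-tuple, this yields $X_n = F_n(M)$.

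The main obstacle I anticipate is the combinatorial step in the final paragraph: carefully identifying the iterated pullback of the $\on{MS}(\Delta^{\scr{T}}, X)$ with the set of bracketing-defined tuples via the bijection between triangulations of $P_{n+1}$ and binary bracketings, and then translating ``every bracketing of the whole tuple is defined'' into ``every contiguous sub-tuple is fully multiplicable.'' The weak-associativity verification at $n=3$ is a single application of a simplicial identity, and the construction of the underlying partial unital magma and its face and degeneracy maps is already packaged in Proposition \ref{prop:ess_im_magad}.
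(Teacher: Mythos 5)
Your proposal is correct and follows essentially the same route as the paper: build the partial unital magma $(X_1,\cdot,1)$ from spinyness and reducedness, extract weak associativity from the lowest weak $2$-Segal condition via the simplicial identity $d_1d_1=d_1d_2$, and use the higher weak $2$-Segal limits to identify $X_n$ with $N(M)_n$. The only divergence is in the finishing step and is minor: where you compute the limit over $\scr{I}_n$ explicitly as the set of fully associable tuples $F_n(M)$, the paper instead compares the two limit cones $\on{MS}(X,n)$ and $\on{MS}(N(M),n)$, which agree away from the cone point because membrane sets over the spine and the subsets $\Delta^{\scr{T}}$ depend only on the $2$-truncation.
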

	
	\begin{proof}
		Define $M:=X_1$, $1:=s_0(\ast)\in X_1$. Define the set of multaplicable pairs in $M$ to be the image of the 1-Segal map $X_2\to X_1\times X_1$, and define the multiplication to be $d_1:X_2\to X_1$. Note that, for $m\in M$, the elements $s_0(m)\in X_2$ and $s_1(m)\in X_2$ show that the pairs $(m,1)$ and $(1,m)$ are multiplicable, and both multiply to $m$.
		
		As in the 2-Segal case, we can use spiny-ness and reducedness to identify $X_n$ with a subset of $X_1^{\times n}$, and show that under this identification
		\[
		d_i(x_1,\ldots, x_n) =\begin{cases}
			(x_2,\ldots, x_n) & i=0 \\
			(x_1,\ldots, x_i\cdot x_{i+1},\ldots, x_n) & 0<i<n\\
			(x_1,\ldots,x_{n-1}) & i=n 
		\end{cases}
		\]
		and 
		\[
		s_i(x_1,\ldots,x_n)=(x_1,\ldots, x_{i},1,x_{i+1},\ldots, x_n). 
		\]
		
		The first weak 2-Segal condition requires that the diagram 
		\[
		\begin{tikzcd}[column sep=huge,row sep=large]
			X_3 \arrow[r]\arrow[d] & X_2\times^{\squareslashleft} X_2\arrow[d]\\
			X_2\times^{\squareslashright} X_2 \arrow[r] & X_1\times X_1\times X_1
		\end{tikzcd}
		\]
		be pullback, where the $\squareslashleft$ superscripts indicate which triangulation the 2-Segal pullback corresponds to. This immediately implies that if $(x\cdot y)\cdot z$ exists and $x\cdot (y\cdot z)$ exists, then the two are equal. Moreover, the canonical inclusions 
\[
\begin{tikzcd}
X_n  \arrow[r] & \lim_{\Delta^{\on{inj}}/\Delta^{\scr{T}}} X_i   
\end{tikzcd}
\]
		for a triangulation $\scr{T}$ of $P_{n+1}$ show that the tuples representing elements of $X_n$ are $n$-composable with respect to this operation.
		
		Letting $M$ be the weak partial monoid so defined, this means there is a canonical inclusion of simplicial sets $f:X\to N(M)$ which is an isomorphism on sets of $0$, $1$, and $2$-simplices. 
		
		However, this morphism of simplicial sets induces canonical natural transformations of membrane space diagrams 
		\[
		\gamma_n:\on{MS}(X,n)\Rightarrow  \on{MS}(N(M),n). 
		\]
		The values of $\on{MS}(X,n)$ on the elements $\Delta^{\scr{T}}$ and $\on{Sp}^n$ are uniquely determined by the 2-truncation of $X$. As such, $\gamma_n$ is a natural isomorphism away from the cone point. Since the diagrams $\on{MS}(X,n)$ and $\on{MS}(N(M),n)$ are limit diagrams, this immediately shows that $\gamma_n$ is a natural isomorphism, and so $f$ is an isomorphism of simplicial sets, as desired.
	\end{proof}
	
	\begin{proof}[Proof (Of {Theorem}\ref{thm:nerve_characterization})]
		By {Proposition} \ref{prop:max_ass_data_FF} and {Proposition} \ref{prop:ess_im_magad}, the nerve is fully faithful. By {Proposition} \ref{prop:Nerve_Ess_surj}, it is essentially surjective onto $\sf{W}2\sf{S}^{\on{sp},\ast}_\Delta$. Thus, the theorem is proven. 
	\end{proof}

\subsection{Coskeletalness and partial magmas}

	It is 
	{well-}known that 2-Segal simplicial sets 
	{are} 
	 3-coskeletal (see \cite[Corollary 1.7]{BOORS}). 
	{In this section we will see that} this result can be strengthened in the case of spiny weakly 2-Segal simplicial sets.

	For convenience, we will define the poset $\underline{\scr{I}}_n$ to be the full subposet of $\scr{I}_n$ on all of the objects \emph{except} $\Delta^n$, so that 
	\[
	\scr{I}_n\cong \underline{\scr{I}}_{n}\star [0]. 
	\]
	We view $\scr{I}_n$ and $\underline{\scr{I}}_n$ as subcategories of $\Set_\Delta$.  
	
	\begin{proposition}\label{prop:w2s_cosk}
		Let $X$ be a spiny, weakly 2-Segal set. Then, for any $n>2$, every diagram 
		\[
		\begin{tikzcd}[column sep=huge,row sep=large]
			\partial \Delta^n\arrow[d,hookrightarrow] \arrow[r,"f"] & X \\
			\Delta^n \arrow[ru,dashed,"\tilde f"']
		\end{tikzcd}
		\]
		admits a unique extension $\tilde f$. 
	\end{proposition}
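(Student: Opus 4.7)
My plan is to handle uniqueness and existence separately, deferring to a nested induction the verification that the candidate filler really does extend $f$. \emph{Uniqueness} is immediate from spiny-ness: if $\tilde f, \tilde g : \Delta^n \to X$ both restrict to $f$ on $\partial \Delta^n$, they agree in particular on the spine $\on{Sp}^n$, which lies in $\partial \Delta^n$ for $n \geq 2$. The $1$-Segal map $X_n \hookrightarrow \on{MS}(\on{Sp}^n,X)$ is injective, so $\tilde f = \tilde g$.

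For \emph{existence}, I would exploit the weak $2$-Segal condition. For $n \geq 3$, every object $S$ in the poset $\underline{\scr{I}}_n$ --- that is, $\on{Sp}^n$ and each $\Delta^{\scr{T}}$ --- is a simplicial subset of $\partial\Delta^n$, since each $2$-simplex of a triangulation uses only $3$ of the $n+1 \geq 4$ available vertices. Restricting $f$ to each such $S$ produces a compatible family $(f|_S)_{S \in \underline{\scr{I}}_n}$, which weak $2$-Segal identifies with a unique element $\tilde f \in X_n$, the candidate filler.

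The remaining task is to verify that $\tilde f|_{\partial\Delta^n} = f$. I would handle this by an outer strong induction on $n$, combined with an inner induction on the dimension of a proper face $\Delta^I \subsetneq \Delta^n$, showing that $\tilde f|_{\Delta^I} = f|_{\Delta^I}$. In the outer base case $n = 3$ and in all inner base cases $|I| \leq 3$, the check is direct, since every vertex lies in $\on{Sp}^n$, every edge lies either in $\on{Sp}^n$ or in some $\Delta^{\scr{T}}$ (as the ``long edge'' of a triangle $\{i,k,j\}$ with $i<k<j$), and every $2$-simplex lies in some $\Delta^{\scr{T}}$. For the inner inductive step at $|I| = m+1 \geq 4$ with $m < n$, the outer inductive hypothesis applied at dimension $m$ gives a bijection $X_m \cong \on{MS}(\partial\Delta^I, X)$, and the inner inductive hypothesis ensures $\tilde f|_{\partial\Delta^I} = f|_{\partial\Delta^I}$; these together force $\tilde f|_{\Delta^I} = f|_{\Delta^I}$.

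The main subtlety, rather than a genuine difficulty, is to organize this nested induction correctly and to recognize that the weak $2$-Segal condition --- which directly relates $X_n$ only to $1$- and $2$-dimensional membranes --- suffices because the outer induction propagates agreement on the boundary of any higher-dimensional face up to agreement on that face itself.
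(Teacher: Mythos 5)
Your proof is correct and follows essentially the same route as the paper's: both obtain the filler from the weak $2$-Segal limit over $\scr{I}_n$ (whose non-cone objects all lie in $\partial\Delta^n$ and jointly cover its $2$-skeleton) and then use induction on $n$ to show that a map out of $\partial\Delta^n$ is determined by its $2$-truncation, your nested induction over proper faces being an unpacking of the paper's skeletal argument. The only cosmetic difference is that you derive uniqueness directly from spinyness, whereas the paper extracts it from the injectivity of $\Hom(\Delta^n,X)\to\Hom(\Delta^{wn},X)$.
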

	
	\begin{proof}
		In the case $n=3$, Take the pushout 
		\[
		\begin{tikzcd}[column sep=huge,row sep=large]
			\on{Sp}(\Delta^3)\arrow[r]\arrow[d] & \Delta^{\squaredivright}\arrow[d]\\
			\Delta^{\squarediv}\arrow[r] & \Delta^{w3}
		\end{tikzcd}
		\]
		Then consider the maps 
		\[
		\begin{tikzcd}
			\Hom(\Delta^{3},X)\arrow[r] & \Hom(\partial \Delta^3,X) \arrow[r] & \Hom(\Delta^{w3},X)
		\end{tikzcd}
		\]
		induced by the canonical maps $\Delta^{w3}\to \partial\Delta^{3}\to \Delta^3$. The composite is a bijection since $X$ is weakly 2-Segal. However, since the map $\Delta^{w3}\to \partial \Delta^3$ is bijective on simplices, the second map is an injection. This implies that the second map is a bijection, and so the first map must be as well, proving the case $n=3$. 
		
		Now suppose that $n>3$ and $X$ admits unique extensions to the extension problems above for $3\leq k<n$. Let $\Delta^{wn}$ denote the colimit of the diagram 
		\[
		\begin{tikzcd}
			\underline{\scr{I}}_n\arrow[r] & \Set_\Delta
		\end{tikzcd}
		\]  
		given by the inclusion. Note that $\partial\Delta^n$ and $\Delta^n$ equipped with the inclusions canonically form cones over this diagram, so that we obtain a sequence of maps of simplicial sets 
		\[
		\begin{tikzcd}
			\Delta^{wn} \arrow[r] & \partial \Delta^n \arrow[r] & \Delta^n. 
		\end{tikzcd}
		\]
		
		Now note that since $\partial \Delta^n$ is $(n-1)$-skeletal, and $X$ has the unique extension property against $\partial \Delta^k\to \Delta^n$ for $3\leq k<n$, a map $\partial \Delta^n \to X$ is uniquely determined by the underlying map $\on{sk}_2(\partial \Delta^n)\to X$. Since every 2-simplex of $\Delta^n$ is contained in at least one triangulation of $P_n$, the map 
		\[
		\begin{tikzcd}
			\Delta^{wn}\arrow[r] & \partial \Delta^n
		\end{tikzcd}
		\]
		is a surjection onto the 2-skeleton of $\partial \Delta^n$. Thus, the induced map  
		\[
		\begin{tikzcd}
			\Hom(\partial \Delta^3, X)\arrow[r] & \Hom(\Delta^{wn},X)
		\end{tikzcd}
		\]
		is injective. We can then consider the sequence of maps 
		\[
		\begin{tikzcd}
			\Hom(\Delta^{n},X)\arrow[r] & \Hom(\partial \Delta^n,X) \arrow[r] & \Hom(\Delta^{wn},X)
		\end{tikzcd}
		\]
		and note that, by the weak 2-Segal conditions, the composite is a bijection. This implies that the second map is surjective. Since the second map is also injective, it is a bijection. Thus the first map is a bijection as well, and so $X$ has the unique extension property against $\partial \Delta^n\to \Delta^n$. By induction, this completes the proof. 
	\end{proof}
	
	\begin{remark}
		{The converse of this statement does not hold.}	{T}here is an immediate counterexample. Consider the simplicial set $\Delta^{w3}$ constructed in the proof above. There is no non-degenerate higher simplex boundary contained in it, so it is 2-coskeletal, and it is clearly spiny. However, the identity map of $\Delta^{w3}$ does not extend to a map of the 3-simplex to $\Delta^{w3}$, and so $\Delta^{w3}$ is not weakly 2-Segal. 
	\end{remark}
	
	\begin{remark}
		It is, of course, well-known that there are numerous examples of 2-coskeletal simplicial sets which are not the nerves of categories. 
		{T}his shows that every weakly associative partial monoid $M$  which is \emph{not} an actual monoid provides another such example. If the composition is {partially} defined, but there is only one ``object'', this cannot be the nerve of a category, however, the above shows that the nerve will necessarily be 2-coskeletal.
	\end{remark}
	
	\begin{corollary}\label{cor:lowest_w2s_sufficient_if_cosk}
		If $X$ is a 2-coskeletal, reduced, spiny simplicial set, then $X$ satisfies the weak 2-Segal conditions if and only if it satisfies the lowest-dimensional weak 2-Segal condition (i.e., at $n=3$). 
	\end{corollary}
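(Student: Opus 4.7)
The ``only if'' direction is immediate from the definitions. For the nontrivial direction, given such an $X$, the plan is to reconstruct a weak partial monoid $M$ directly from the $2$-truncation of $X$, identify $X$ with its nerve $N(M)$ via $2$-coskeletalness, and then invoke {Theorem} \ref{thm:nerve_characterization} to conclude that $X$ inherits the full weak $2$-Segal property from $N(M)$.

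The first step is to set $M := X_1$ with unit $1 := s_0(\ast)$, which is well-defined by reducedness. Using spinyness, one identifies $X_2$ with its image in $X_1 \times X_1$, declares this image to be the set of multiplicable pairs $M_2$, and defines the partial multiplication to be $a \cdot b := d_1(a, b)$. Unitality is witnessed by the degeneracies $s_0$ and $s_1$ together with the simplicial identities. To verify weak associativity, the key is to unpack the weak $2$-Segal condition at $n=3$: it identifies $X_3$ with the pullback of $\on{MS}(\Delta^{\squarediv}, X)$ and $\on{MS}(\Delta^{\squaredivright}, X)$ over $\on{MS}(\on{Sp}^3, X) \cong X_1^3$. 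Concretely, $X_3$ will consist precisely of those triples $(a, b, c) \in X_1^3$ for which both $(a \cdot b) \cdot c$ and $a \cdot (b \cdot c)$ are defined, and the simplicial identity $d_1 \circ d_1 = d_1 \circ d_2$ applied to any such $3$-simplex will force the two products to coincide. Thus $M$ is a weak partial monoid.

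Finally, one shows that the $2$-truncations of $X$ and $N(M)$ are canonically isomorphic: both have a single $0$-simplex, both have $M$ as their set of $1$-simplices, both have $M_2$ as their set of $2$-simplices, and the face and degeneracy operators match by the formulas of {Corollary} \ref{cor:Segal_nerve}. By {Theorem} \ref{thm:nerve_characterization}, $N(M)$ is spiny, reduced, and weakly $2$-Segal, hence $2$-coskeletal by {Proposition} \ref{prop:w2s_cosk}. Since $X$ is $2$-coskeletal by assumption and any $2$-coskeletal simplicial set is canonically isomorphic to the $2$-coskeleton of its $2$-truncation, this yields $X \cong N(M)$, which establishes that $X$ is weakly $2$-Segal at every $n \geq 3$. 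The substantive step will be extracting weak associativity of $M$ from the single $n=3$ pullback condition; once $M$ is in hand, the rest is formal, leveraging that $2$-coskeletalness reduces equality of simplicial sets to equality of their $2$-truncations.
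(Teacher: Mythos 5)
Your proposal is correct and follows essentially the same route as the paper: the paper's proof likewise builds the weak partial monoid from the $2$-truncation via the construction of Proposition \ref{prop:Nerve_Ess_surj} (which only uses the lowest weak $2$-Segal condition), and then uses $2$-coskeletalness of both $X$ and $N(M)$ to promote the isomorphism of $2$-truncations to an isomorphism $X\cong N(M)$. Your extra detail on extracting weak associativity from the $n=3$ pullback via spinyness and the identity $d_1d_1=d_1d_2$ is exactly what the cited construction does.
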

	
	\begin{proof}
		Only one implication needs a proof. If $X$ is 2-coskeletal, reduced, spiny simplicial set satisfying the lowest-dimensional weak 2-Segal condition, then the construction of {Proposition} \ref{prop:Nerve_Ess_surj} gives us a weak partial monoid $M$ and an isomorphism between the 2-truncations of $X$ and $N(M)$. Since both simplicial sets are 2-coskeletal, it follows that $X\cong N(M)$, completing the proof.
	\end{proof}

	We now prove a yet stronger statement.
	
	\begin{proposition}
		The essential image of the functor 
		\[
		\begin{tikzcd}
			N: &[-3em] \sf{Mag} \arrow[r] & \Set_\Delta 
		\end{tikzcd}
		\]
		consists of precisely the spiny, reduced, 2-coskeletal simplicial sets. 
	\end{proposition}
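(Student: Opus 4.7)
I would build on the preceding results, beginning from the observation that the functor $N\colon \sf{Mag}\to \Set_\Delta$ factors as
\[
\begin{tikzcd}
\sf{Mag} \arrow[r,"(-)^\sharp"] & \sf{Mag}^{\on{ad}} \arrow[r,"N"] & \Set_\Delta,
\end{tikzcd}
\]
with both functors fully faithful by Propositions \ref{prop:max_ass_data_FF} and \ref{prop:ess_im_magad}. Hence $N$ on $\sf{Mag}$ is fully faithful, and its essential image is automatically contained in the spiny reduced simplicial sets. The problem therefore reduces to characterising, among spiny reduced simplicial sets $X$, those which arise from a partial unital magma, i.e.\ those for which the associativity datum produced in the proof of Proposition \ref{prop:ess_im_magad} is the \emph{maximal} one $A_n=F_n(X_1)$.

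For the forward inclusion, I would start from $X=N(M)$ with $M\in\sf{Mag}$, so that Lemma \ref{lem:Nerve_n_composable} gives $X_k\cong F_k(M)$. To verify 2-coskeletalness, fix $n\geq 3$ and a map $\partial\Delta^n \to X$. By spiny-ness and reducedness, this is the data of an $n$-tuple $(x_1,\ldots,x_n)\in X_1^{\times n}$ all of whose proper sub-tuples lie in $F_\bullet(M)$ (from the $(n-1)$-dimensional faces), together with composable ``long edges'' $y_{ij}$ coming from the lower-dimensional faces. I would show $(x_1,\ldots,x_n)\in F_n(M)$ directly: every proper sub-tuple $(x_i,\ldots,x_j)$ sits inside either the $d_0$- or $d_n$-face, hence is fully associable. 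For the full tuple, an arbitrary binary bracketing splits as $(x_1\cdots x_i)\cdot(x_{i+1}\cdots x_n)$; both sides are already well-defined by the sub-tuple analysis, and the existence of the 2-simplex $\Delta^{\{0,i,n\}}\subset \partial\Delta^n$ (present since $n\geq 3$ and $0<i<n$) provides both that the product $(x_1\cdots x_i)\cdot(x_{i+1}\cdots x_n)$ is defined and that its value equals $y_{0n}$, independent of $i$. Uniqueness of the extension follows from spiny-ness.

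For the backward inclusion, take $X$ spiny, reduced, and 2-coskeletal. From Proposition \ref{prop:ess_im_magad} I get a partial unital magma $(X_1,\cdot,1)$ with associativity datum $A_n=X_n\subset X_1^{\times n}$, and it suffices to show $A_n=F_n(X_1)$ for all $n$. Since $X$ is 2-coskeletal, a $k$-simplex is the same as a map $\on{tr}_2\Delta^n\to\on{tr}_2 X$, which, using spiny-ness, amounts to an edge assignment $y_{ij}\in X_1$ for $0\leq i<j\leq n$ such that $(y_{ij},y_{jk})\in X_2$ and $y_{ij}\cdot y_{jk}=y_{ik}$ for every triple $i<j<k$, with $y_{m-1,m}=x_m$. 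Iterating the triples $\{i,i+1,j\}$, $\{i,j-1,j\}$ and $\{i,m,j\}$ shows that this data forces every binary bracketing of $(x_{i+1},\ldots,x_j)$ to be multiplicable and equal to $y_{ij}$; thus $X_n\subset F_n(X_1)$. Conversely, given a fully associable tuple, I would define $y_{ij}$ as the common value of all bracketings of $(x_{i+1},\ldots,x_j)$ and verify the triangle relations by associability of the sub-tuple $(x_{i+1},\ldots,x_k)$.

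The main obstacle I foresee is bookkeeping in the backward direction: making precise the claim that, for a spiny reduced 2-coskeletal $X$, the $n$-simplices are exactly the $n$-tuples in $X_1$ whose every sub-product is well-defined for \emph{all} bracketings and bracketing-independent. Once this dictionary between ``triangle coherence for all triples $i<j<k$'' and ``full associability'' is in place, everything else reduces to combinatorial verification using the description of faces and degeneracies from Corollary \ref{cor:Segal_nerve}.
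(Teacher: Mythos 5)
Your proposal is correct and follows essentially the same route as the paper: injectivity of $X_n\to\Set_\Delta(\partial\Delta^n,X)$ from spiny-ness, surjectivity by showing the spine of a boundary is fully associable (the paper phrases the bracketing analysis via triangulations $\Delta^{\scr{T}}\subset\partial\Delta^n$ of $P_{n+1}$, which is dual to your recursive top-level-split argument using $\Delta^{\{0,i,n\}}$), and the converse by identifying $X_n$ with $F_n(X_1)$ for a spiny reduced 2-coskeletal $X$. Your backward direction is in fact more explicit than the paper's, which simply defers to the proof of Proposition \ref{prop:ess_im_magad}, but the underlying dictionary between triangle coherence and full associability is the same.
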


	\begin{proof}
		First, suppose that $(M,\cdot,1)$ is a partial unital magma. By {Proposition} \ref{prop:ess_im_magad}, $X:=N(M)$ is spiny and reduced, so in the commutative diagram 
		\[
		\begin{tikzcd}[column sep=huge,row sep=large]
			\Set_\Delta(\partial \Delta^n,X)\arrow[d,hookrightarrow] & X_n \arrow[l]\arrow[dl,hookrightarrow]\\
			X_1^{\times n}
		\end{tikzcd}
		\]
		induced by the inclusions 
		\[
		\begin{tikzcd}[column sep=huge,row sep=large]
			\partial \Delta^n \arrow[r] & \Delta^n \\
			\on{Sp}(\Delta^n)\arrow[u]\arrow[ur] 
		\end{tikzcd}
		\]
		the two vertical maps are injective. As such, the horizontal map is as well. We thus need only prove that the horizontal map is surjective. 
		
		Let $(m_1,\ldots,m_n)$ be the spine of an $n$-simplex boundary in $X$ for $n>2$. For any triangulation $\scr{T}$ of $P_{n+1}$, $\Delta^{\scr{T}}\subset \partial \Delta^n$, and thus, the fact that $(m_1,\ldots,m_n)$ forms the spine of an $n$-simplex boundary implies that $(m_1,\ldots,m_n)$ is $T$-composable for the binary tree $T$ dual to $\scr{T}$. Thus, we see that $(m_1,\ldots,m_n)$ is $n$-composable. Since the terminal edge of $\Delta^n$ is contained in $\partial \Delta^n$, all of the ways of multiplying the tuple $(m_1,\ldots,m_n)$ coincide, and so this tuple is associable. 
		
		Moreover, since the $0$- and $n$-faces of this tuple necessary correspond to fully associable tuples by {Lemma} \ref{lem:Nerve_n_composable}, it follows that $(m_1,\ldots,m_n)$ is, in fact, fully associable, and thus defines an $n$-simplex in $X=N(M)$. 
		
		The other implication follows immediately from the proof of {Proposition} \ref{prop:ess_im_magad}, and the proposition is proven. 
	\end{proof}

	\begin{remark}
		The characterization of partial unital magmas as spiny, reduced, 2-coskeletal simplicial sets provides another way of understanding the issues discussed in {Remark} \ref{rmk:Chermak_non-equivalent_same}. Effectively, this result means that a maximal set of associativity data can be uniquely assigned to any partial unital magma. That is, working in $\sf{Mag}^{\on{ad}}$ means ignoring that some fully associative tuples are associative, leading to non-isomorphic encodings of the same underlying algebraic structure. 
		
	\end{remark}

\section{{Invertibility and (weak) partial groups}}
\label{sec:invertibility}

{In the next section, we will see that invertibility plays a crucial role in our definition of simplicial effects.}
Accordingly, we now digress into the notion of invertibility for weak partial monoids, which necessarily brings us into contact with Chermak's notion of a partial group, as well as the corresponding concept for weak partial monoids.
These objects form the opposite extremal case from effect algebras, yet the analysis of inverses and invertibility is rather subtle.

\begin{definition}
	Let $(M,\cdot,1)$ be a weak partial monoid, and let $m\in M$. We say that an element $n\in M$ is 
	\begin{itemize}
		\item a \emph{left inverse} to $m$ if $(n,m)$ is multiplicable and $n\cdot m=1$,
		\item a \emph{right inverse} to $m$ if $m$ is a left inverse to $n$, and
		\item an \emph{inverse} (or a \emph{2-sided inverse}) to $m$ if it is both a left and right inverse to $m$.  
	\end{itemize}
	We will call $M$ a \emph{weak partial monoid with inverses} if every element $m\in M$ has a 2-sided inverse, which we denote by $m^{-1}$. The category $\sf{WPM}^{\on{inv}}$ of weakly associative partial monoids with inverses is then a full subcategory of $\sf{WPM}$. 
\end{definition}
	
\begin{lemma}\label{lem:inversion_props}
	Let $(G,\cdot,1)$ and $(H,\ast,e)$ be two weakly partial monoids with inverses. 
	\begin{enumerate}
		\item Let $g\in G$, and $u$ be a left or right inverse to $g$. Then $u=g^{-1}$. 
		\item For $g\in G$, $(g^{-1})^{-1}=g$. 
		\item Let $f:G\to H$ be a morphism of weak partial monoids. Then for any $g\in G$, $f(g^{-1})=f(g)^{-1}$.
	\end{enumerate}
\end{lemma}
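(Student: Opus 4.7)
The plan is to establish part (1) first, since parts (2) and (3) will then follow almost formally from it. For (1), suppose $g \in G$ has two-sided inverse $g^{-1}$, and let $u$ be a left inverse to $g$, so that $(u,g) \in G_2$ and $u \cdot g = 1$. I would then consider the triple $(u, g, g^{-1})$ and compute both bracketings: $(u \cdot g) \cdot g^{-1} = 1 \cdot g^{-1} = g^{-1}$ is defined by unitality, and $u \cdot (g \cdot g^{-1}) = u \cdot 1 = u$ is defined by unitality and the fact that $g \cdot g^{-1} = 1$. Since both bracketings of the triple are defined, weak associativity applies and forces $u = g^{-1}$. The case where $u$ is a right inverse is handled by the symmetric argument using the triple $(g^{-1}, g, u)$.

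For (2), observe that the defining equations $g^{-1} \cdot g = 1$ and $g \cdot g^{-1} = 1$ say precisely that $g$ is a two-sided inverse to $g^{-1}$. Applying (1) to $g^{-1}$, the element $g$ must coincide with the unique two-sided inverse $(g^{-1})^{-1}$.

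For (3), starting from $g \cdot g^{-1} = 1$, the assumption that $f$ is a morphism of partial unital magmas gives that $(f(g), f(g^{-1})) \in H_2$ and $f(g) \ast f(g^{-1}) = f(1) = e$. The analogous argument applied to $g^{-1} \cdot g = 1$ yields $f(g^{-1}) \ast f(g) = e$. Thus $f(g^{-1})$ is a two-sided inverse to $f(g)$, and (1) applied to $H$ gives $f(g^{-1}) = f(g)^{-1}$.

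The main obstacle is step (1): one must be careful that in a weak partial monoid it is \emph{not} true that defining one parenthesization guarantees the other is defined. The argument succeeds precisely because, in the crucial triple, unitality \emph{independently} ensures that both bracketings are defined, after which the weak associativity hypothesis can be invoked. This is exactly the kind of situation that motivates the chosen notion of weak associativity lying strictly between Segal's and Chermak's.
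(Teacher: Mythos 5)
Your proposal is correct and follows essentially the same route as the paper's proof: part (1) is established by observing that both bracketings of the triple $(u,g,g^{-1})$ are defined via unitality and the inverse equations, so weak associativity forces $u = 1\cdot g^{-1} = (u\cdot g)\cdot g^{-1} = u\cdot(g\cdot g^{-1}) = u\cdot 1$, hence $u = g^{-1}$; parts (2) and (3) then follow formally from (1) exactly as in the paper. Your closing remark correctly identifies the key point, namely that both parenthesizations must be shown to be defined \emph{independently} before weak associativity can be invoked.
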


\begin{proof}
	For part (1), we treat only the case where $u$ is a left inverse, the other case is similar. Note that $(u\cdot g)\cdot g^{-1}$ and $u\cdot(g\cdot g^{-1})$ are multiplicable, and so by weak associativity, they are equal. Thus
	\[
	g^{-1}=1\cdot g^{-1}=(u\cdot g)\cdot g^{-1}=u\cdot(g\cdot g^{-1})= u\cdot 1= u.
	\] 
	Claim (2) is an immediate consequence of (1). 
	
	For (3), note that,
	\[
	e=f(1)=f(g^{-1}\cdot g) =f(g^{-1})\cdot f(g)
	\]
	so that $f(g^{-1})$ is left inverse to $f(g)$. Then apply part (1). 
\end{proof}

{One might be tempted to define partial groups to simply be weak partial monoids with inverses, however, this is insufficient to guarantee that, e.g., $(g\cdot h)^{-1}=(h^{-1}\cdot g^{-1})$, since we cannot use weak associativity to guarantee that the latter pair is multiplicable. We thus impose further conditions in our definition of weakly associative partial groups. }

\begin{definition}
	Let $(M,\cdot,1)$ be a weak partial monoid. The \emph{opposite weak partial monoid $M^\op$} has the same underlying set and unit as $M$, but multiplication given by $m\ast n=n\cdot m$. 
	
	We call a weak partial monoid $(G,\cdot,1)$ with inverses \emph{involutive} if $(-)^{-1}:G\to G^\op$ is an isomorphism of weak partial monoids. We call $G$ a \emph{weakly associative partial group} if for every fully associable tuple $(g_1,\ldots, g_n)$, the tuple $(g_1,\ldots, g_n,g_n^{-1},\ldots, g_1^{-1})$ is fully associable.   
\end{definition}

\begin{lemma}
	Let $(G,\cdot,1)$ be a weak partial monoid with inverses. 
	\begin{enumerate}
		\item $G$ is involutive if and only if $(-)^{-1}:G\to G^\op$ extends to an isomorphism of simplicial sets 
		\[
		\begin{tikzcd}
			(-)^{-1}:&[-3em] N(G) \arrow[r] & N(G)^\op. 
		\end{tikzcd}
		\]
		\item If $G$ is a weakly associative partial group, then $G$ is involutive. 
	\end{enumerate}
\end{lemma}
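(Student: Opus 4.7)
For part (1), the plan is to leverage the fully faithfulness of the nerve functor $N:\sf{WPM}\to \sf{W2S}^{\on{sp},\ast}_\Delta$ established in Theorem \ref{thm:nerve_characterization}. I would first construct a natural isomorphism of simplicial sets $\alpha_G: N(G^\op) \xrightarrow{\cong} N(G)^\op$ defined on $n$-simplices by $(g_1,\ldots,g_n) \mapsto (g_n,\ldots,g_1)$, verifying compatibility with the face and degeneracy maps of Corollary \ref{cor:Segal_nerve} by direct computation --- the crucial observation being that a face $d_i$ in $N(G^\op)$ multiplies under $\ast$, which reverses to $d_{n-i}$ in $N(G)$ under the order reversal. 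Given $\alpha_G$, the forward direction of (1) follows by applying $N$ functorially to the isomorphism $(-)^{-1}:G\to G^\op$ and composing with $\alpha_G$. For the converse, any simplicial isomorphism $N(G)\to N(G)^\op$ extending $(-)^{-1}$ yields, after composition with $\alpha_G^{-1}$, a simplicial map $N(G)\to N(G^\op)$; fully faithfulness of $N$ then produces a morphism of weak partial monoids $G\to G^\op$ agreeing with $(-)^{-1}$ on the underlying set, and fully faithful functors reflect isomorphisms.

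For part (2), the plan is to verify directly that $(-)^{-1}:G\to G^\op$ is an isomorphism in $\sf{WPM}$. Unitality $1^{-1}=1$ is immediate, and bijectivity is Lemma \ref{lem:inversion_props}(2). The substantive step is to show that if $(g,h)\in G_2$ then $(h^{-1},g^{-1})\in G_2$ and $(g\cdot h)^{-1}=h^{-1}\cdot g^{-1}$. Since a pair is trivially fully associable, the weakly associative partial group hypothesis supplies full associability of the $4$-tuple $(g,h,h^{-1},g^{-1})$, which in particular gives multiplicability of $(h^{-1},g^{-1})$. All bracketings of this $4$-tuple then coincide, yielding
\[
(g\cdot h)\cdot (h^{-1}\cdot g^{-1}) \;=\; g\cdot ((h\cdot h^{-1})\cdot g^{-1}) \;=\; g\cdot g^{-1} \;=\; 1,
\]
so by Lemma \ref{lem:inversion_props}(1) we identify $h^{-1}\cdot g^{-1}$ with $(g\cdot h)^{-1}$. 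Running the symmetric argument on a multiplicable pair read in $G^\op$ shows that the set-theoretic inverse $(-)^{-1}:G^\op\to G$ is also a morphism in $\sf{WPM}$, so $(-)^{-1}$ is a genuine isomorphism of weak partial monoids.

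I expect the main bookkeeping challenge in part (1) to lie in establishing $N(G^\op)\cong N(G)^\op$ coherently across all degrees, given the renumbering of face and degeneracy maps under the order-reversing involution on $\Delta$; past degree two one must check the reversal intertwines the general formulas of Corollary \ref{cor:Segal_nerve}. Part (2), on the other hand, reduces to the classical identity $(gh)^{-1}=h^{-1}g^{-1}$, and the only real subtlety is that the proof requires multiplicability of $(h^{-1},g^{-1})$ in advance --- precisely what the weakly associative partial group axiom is designed to guarantee.
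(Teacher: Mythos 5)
Your proposal is correct and follows essentially the same route as the paper: part (1) via the order-reversal isomorphism $N(G^\op)\cong N(G)^\op$ together with full faithfulness of the nerve, and part (2) by feeding the multiplicable pair $(g,h)$ into the weakly-associative-partial-group axiom to obtain full associability of $(g,h,h^{-1},g^{-1})$ and hence multiplicability of $(h^{-1},g^{-1})$. If anything, your explicit verification that $(g\cdot h)^{-1}=h^{-1}\cdot g^{-1}$ via the equality of bracketings and Lemma \ref{lem:inversion_props}(1) spells out a step the paper leaves implicit.
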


\begin{proof}
	For (1), it is easy to see that $N(G^\op)\cong N(G)^\op$. Since $N(G)$ is spiny, if there exists such an extension, it is given on $n$-simplices by 
	\[
	\begin{tikzcd}
		(g_1,\ldots,g_n)\arrow[r,mapsto] & (g_n,\ldots,g_1). 
	\end{tikzcd}
	\]
	Since the nerve is fully faithful, statement (1) follows. 
	
	For (2), given any fully associable tuple $(g_1,\ldots,g_n)$ in $G$, the fact that $(g_1,\ldots, g_n,g_n^{-1},\ldots, g_1^{-1})$  is fully associable means that the subsequence $(g_n^{-1},\ldots, g_1^{-1})$ is as well. Thus, $(-)^{-1}$ is a partial monoid homomorphism. 
\end{proof}

We then compare our notion of weakly associative partial group to the usual definition of partial groups \cite[Definition 2.1]{Chermak}.

\begin{definition}\label{defn:chermak_PG}
	Let $(M, \mathbf{D},\Pi)$ be a partial associativity structure. An \emph{inversion} on $M$ is an involution 
	\[
		\begin{tikzcd}
			(-)^{-1}:&[-3em] M\arrow[r] & M .
		\end{tikzcd}
	\] 
	We abusively also write $(-)^{-1}$ for the involution 
	\[
	\begin{tikzcd}[row sep=0em]
		\mathbf{W}(M) \arrow[r] &\mathbf{W}(M)\\
		(g_1,\ldots,g_n) \arrow[r,mapsto] & (g_n^{-1},\ldots,g_1^{-1}).
	\end{tikzcd}
	\]
	A \emph{partial group} is a partial associativity structure equipped with an inversion $(-)^{-1}$ such that 
	\begin{itemize}
		\item[5.] For any $u\in \bf{D}$, $u\circ u^{-1}\in \bf{D}$ and $\Pi(u\circ u^{-1})=1$.   
	\end{itemize} 
\end{definition}

The following proposition justifies the terminology ``weakly associative partial group''. 

\begin{proposition}
	Let $(M,\cdot, 1)$ be a weak partial monoid. Then the maximal associativity structure on $M$ defines a partial group if and only if $M$ is a weakly associative partial group. 
\end{proposition}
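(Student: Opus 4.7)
The plan is to match the five conditions in Chermak's partial group definition (Definition \ref{defn:chermak_PG}) against the axioms of a weakly associative partial group, routing through the equivalence $\sf{Mag}^{\on{ad}}\simeq\sf{PAS}$ of Proposition \ref{prop:Magad equivalent to PAS}. Under this equivalence, the maximal associativity structure on $M$ corresponds to the PAS with $\mathbf{D}=F(M)\cup M\cup\{\varnothing\}$ and $\Pi$ equal to the common value $\cdot^{T}$ over any bracketing $T$. Conditions (1)--(4) of Definition \ref{defn:chermak_PG} are automatic for the maximal associativity structure (this is essentially the content of Proposition \ref{prop:Magad equivalent to PAS}), so the entire statement reduces to matching axiom (5) against the WAPG definition, after specifying what the inversion $(-)^{-1}$ on $M$ should be.

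For the forward direction, I would assume the maximal associativity structure carries an inversion making it a Chermak partial group. Applying axiom (5) to a single element $g\in M\subset \mathbf{D}$ yields $\Pi(g\circ g^{-1})=g\cdot g^{-1}=1$, so $g^{-1}$ is a right inverse of $g$. Since $(-)^{-1}$ is an involution, swapping the roles of $g$ and $g^{-1}$ shows $g^{-1}$ is also a left inverse. Hence $M$ is a weak partial monoid with inverses, and by Lemma \ref{lem:inversion_props}(1) Chermak's inversion coincides with the two-sided inverse. Axiom (5) applied to a longer word $u=(g_1,\ldots,g_n)\in \mathbf{D}$ then says precisely that $(g_1,\ldots,g_n,g_n^{-1},\ldots,g_1^{-1})\in \mathbf{D}=F(M)\cup M\cup\{\varnothing\}$, which is the defining condition of a weakly associative partial group.

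For the reverse direction, I would take the inversion $(-)^{-1}\colon M\to M$ to be the two-sided inverse supplied by the WAPG; this is an involution by Lemma \ref{lem:inversion_props}(2). The ``membership'' half of axiom (5) is exactly the WAPG condition. The remaining content is to verify $\Pi(u\circ u^{-1})=1$ for every $u=(g_1,\ldots,g_n)\in \mathbf{D}$. I would proceed by induction on $n$: the base case $n=1$ is definitional. For the step, the whole tuple $(g_1,\ldots,g_n,g_n^{-1},\ldots,g_1^{-1})$ is fully associable by the WAPG hypothesis, so every binary bracketing is multiplicable and they all evaluate to the same element. I choose the telescoping bracketing $T$ that first computes a bracketing $T'$ of the sub-tuple $(g_2,\ldots,g_n,g_n^{-1},\ldots,g_2^{-1})$, then multiplies $g_1\cdot(-)$ on the left, then multiplies $(-)\cdot g_1^{-1}$ on the right. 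The sub-tuple is fully associable (being a sub-tuple of a fully associable tuple), so the inductive hypothesis gives $\cdot^{T'}=1$. Combined with unitality $g_1\cdot 1=g_1$ and the inverse law $g_1\cdot g_1^{-1}=1$, this yields $\cdot^T=1$ as desired.

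The main obstacle is ensuring the telescoping computation is legitimate: we must know that the sub-tuple is itself in $\mathbf{D}$ (automatic, since full associability passes to sub-tuples) and that every intermediate multiplication in the chosen tree is defined. The latter is secured by full associability of the whole tuple, which guarantees that \emph{any} bracketing evaluates. Once this is in hand, the two-sided nature of WAPG---the fact that appending the inverse word preserves full associability---does all the heavy lifting, and the verification of axiom (5) becomes a straightforward induction.
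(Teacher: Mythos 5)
Your proof is correct and takes essentially the same route as the paper: a direct matching of Chermak's axioms against the weakly associative partial group axioms through the equivalence of Proposition \ref{prop:Magad equivalent to PAS}, with Lemma \ref{lem:inversion_props}(1) used to identify Chermak's inversion with the unique two-sided inverse. You are in fact more thorough than the paper, which dismisses the verification of $\Pi(u\circ u^{-1})=1$ as immediate, whereas your telescoping induction (justified by full associability of $(g_1,\ldots,g_n,g_n^{-1},\ldots,g_1^{-1})$) spells out why it holds.
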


\begin{proof}
	This is almost immediate from the definitions. The only possible issue is the possibility that a PAS could be equipped with multiple different inversions satisfying condition (5) from {Definition} \ref{defn:chermak_PG}. However, by {Lemma}\ref{lem:inversion_props} part (1), this is not the case, completing the proof. 
\end{proof}

{Commutative nerves are our key examples.}
	
	\begin{corollary}\label{cor: comm nerve weak partial group}
		For a group $G$, $N(\ZZ,G)$ and $N(\ZZ_{/d},G)$ are weakly associative partial 
		groups.
	\end{corollary}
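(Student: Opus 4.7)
The plan is to exhibit the commutative nerves as nerves of explicit weakly associative partial groups. I handle $N(\ZZ, G)$ in detail; the case $N(\ZZ_{/d}, G)$ is identical after restricting to the $d$-torsion subset $\{g \in G : g^d = 1\}$.

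First, I would construct the candidate weak partial monoid $M$ with underlying set $G$, unit $1_G$, and $g \cdot h$ defined precisely when $[g, h] = 1$, in which case $g \cdot h := gh$. Weak associativity is automatic from associativity of $G$: if both $(a \cdot b) \cdot c$ and $a \cdot (b \cdot c)$ are defined, each equals the group product $abc$. In the $d$-torsion case one further checks that this subset is closed under the partial operation, since if $[g,h]=1$ and $g^d = h^d = 1$, then $(gh)^d = g^d h^d = 1$.

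Next, I would identify $N(M)$ with $N(\ZZ, G)$ using the concrete description of the nerve provided by Corollary \ref{cor:Segal_nerve}. The key observation is that a tuple $(g_1, \ldots, g_n)$ is fully associable in $M$ if and only if the $g_i$ pairwise commute. Multiplicability of each length-two subtuple forces $[g_i, g_j] = 1$; conversely, if the $g_i$ pairwise commute, every intermediate product lies in the abelian subgroup they generate, so every bracketing of every subtuple is defined and returns the same group product. This bijection is visibly compatible with the face and degeneracy maps, so $N(M) \cong N(\ZZ, G)$ as simplicial sets.

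Finally, I would verify the two axioms of a weakly associative partial group. Each $g \in M$ admits the group inverse $g^{-1}$ as a two-sided inverse, since $[g, g^{-1}] = 1$ trivially; in the $d$-torsion case, $(g^{-1})^d = (g^d)^{-1} = 1$, so $g^{-1}$ still lies in the underlying set. For the extension axiom, a fully associable tuple $(g_1, \ldots, g_n)$ is pairwise commuting by the identification above, and the extended tuple $(g_1, \ldots, g_n, g_n^{-1}, \ldots, g_1^{-1})$ remains pairwise commuting, since $[a,b] = 1$ immediately implies $[a, b^{-1}] = 1$ and $[a^{-1}, b^{-1}] = 1$. Hence the extended tuple is again fully associable. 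The argument is essentially routine once the weak partial monoid structure is correctly exhibited; no substantial obstacle arises beyond verifying the commutativity bookkeeping at each stage.
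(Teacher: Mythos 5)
Your overall strategy is a legitimate alternative to the paper's: you build the weak partial monoid $(G,\cdot,1)$ with $g\cdot h$ defined iff $[g,h]=1$ and identify its nerve with $N(\ZZ,G)$ via Corollary \ref{cor:Segal_nerve}, whereas the paper works entirely on the simplicial side, combining $2$-coskeletality (Lemma \ref{lem:comm_nerve_2cosk}) with Corollary \ref{cor:lowest_w2s_sufficient_if_cosk} to reduce everything to the lowest weak $2$-Segal condition, and then importing the Chermak partial group structure to get the inversion axioms. Your route has the merit of making the underlying algebraic object completely explicit.

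However, there is a genuine gap at the crux of your identification $N(M)\cong N(\ZZ,G)$, namely the forward direction of ``fully associable $\Rightarrow$ pairwise commuting.'' You justify it by saying that multiplicability of each length-two subtuple forces $[g_i,g_j]=1$, but the definition of full associability only quantifies over \emph{contiguous} subtuples $(g_i,g_{i+1},\ldots,g_j)$; the only length-two subtuples it constrains are the adjacent pairs $(g_i,g_{i+1})$, and adjacent commutativity does not give pairwise commutativity. (Consider $(g_1,1,g_3)$ with $[g_1,g_3]\neq 1$: the adjacent pairs are multiplicable, yet the triple fails to be fully associable precisely because $(g_1\cdot 1)\cdot g_3$ is undefined --- which shows the implication you need cannot come from the adjacent pairs alone.) To obtain $[g_i,g_j]=1$ for $j>i+1$ you must invoke the multiplicability of the longer contiguous subtuple under bracketings such as $(g_i)\cdot(g_{i+1}\cdots g_j)$ and $(g_i\cdots g_{j-1})\cdot(g_j)$, which give $[g_i,\,g_{i+1}\cdots g_j]=1$ and $[g_i\cdots g_{j-1},\,g_j]=1$, and then cancel against the commutativity of the closer pairs by induction on $j-i$. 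This is exactly the commutator manipulation carried out in the paper's Lemma \ref{lem:comm_nerve_2cosk}, which is also the engine of the paper's own proof of this corollary. Once that step is supplied, the rest of your argument --- the existence of two-sided inverses, the extension axiom via pairwise commutativity of $(g_1,\ldots,g_n,g_n^{-1},\ldots,g_1^{-1})$, and the $d$-torsion variant --- goes through.
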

	
	\begin{proof} 
We will again write the proof only for \( N(\mathbb{Z}, G) \). By {Lemma} \ref{lem:comm_nerve_2cosk} and {Corollary} \ref{cor:lowest_w2s_sufficient_if_cosk}, it suffices to show that \( N(\mathbb{Z}, G) \) satisfies the lowest weak 2-Segal condition. This is identical to the \( n = 3 \) case in the proof of {Lemma} \ref{lem:comm_nerve_2cosk}. 
{Therefore,} \( N(\mathbb{Z}, G) \) is a weak partial monoid. Since it also serves as an example of a partial group in the sense of {Definition} \ref{defn:chermak_PG}, it follows immediately that it is a weakly associative partial group.
	
	\end{proof}

{It is also} worth noting that there are canonical constructions of partial groups which are \emph{not} weakly associative. 

\begin{example}
	This is a general class of (non-)examples. Let $G$ be a group, $Z$ a set with a $G$-action, and $Y\subset Z$. Define a reduced simplicial set $L_Y(G)$ whose set of $n$-simplices is the set 
	\[
	L_Y(G)_n:=\left\lbrace (g_1,\ldots,g_n)\middle| \exists \{y_i\in Y\}_{i=0}^n \text{ s.t. } g_iy_{i-1}=y_i\;\; \forall 0\leq i\leq n \right\rbrace.
	\]
	It is obvious that this is spiny, and one can easily check that it is a partial group in the sense of {Definition} \ref{defn:chermak_PG}.  
	
	{T}here are examples which are not weakly associative. Consider, for instance, the group $\ZZ_{/4}$ acting on $\ZZ_{/4}$, equipped with the subset $Y$ consisting of $0$, $1$, and $2$. It is immediate that the pairs $(1,1)$, $(2,1)$ and $(1,2)$ are multiplicable in $L_Y(\ZZ_{/4})$. However, the tuple $(1,1,1)$ is not multiplicable, and so $L_Y(\ZZ_{/4})$ is not weakly associative.

	It is
	important to point out that considering these examples as partial groups is somewhat artificial, as they are more naturally categories. Given $G$, $Z$, and $Y\subset Z$ as above, we can form a groupoid $C_Y(G)$, which is the full sub-groupoid of the action groupoid $Z_{//G}$ on the objects in $Y$. The nerve of $C_Y(G)$ is a simplicial set, and $L_Y(G)$ is simply the image of this nerve in the nerve of $G$. 
\end{example}

\section{Simplicial effects}
\label{sec:Simplicial effects}

{
Effect algebras and effect algebroids rely on partial operations that are associative in a stronger sense than is satisfied by the canonical examples of simplicial measurements, which also behave like effects in the simplicial framework of \cite{sdist}. Motivated by our weaker notion of associativity---which holds for those simplicial examples---we introduce a category of \emph{simplicial effects}. For our definition, we begin by identifying the key ideas underlying effect algebras and their generalization to effect algebroids.
}

\subsection{The orthocomplement}\label{sec:orthocomplement}

{  One of the first structures we analyze in the context of weakly $2$-Segal cyclic sets is the orthocomplement, 
  a fundamental concept in effect algebras and effect algebroids. 
  In this setting, we treat spiny, weakly $2$-Segal simplicial sets as multivalued categories, 
  following \cite[\S 3.3]{DK}.}
\begin{itemize}
	\item We will refer to the elements of $X_0$ as the \emph{objects} of $X$.
	\item For objects $x,y\in X_0$, 
	{an element of the fibre of the map}
	\[
	\begin{tikzcd}
		X_1 \arrow[r,"{(d_1,d_0)}"] & X_0\times X_0  
	\end{tikzcd}
	\]
{will be denoted by $f:x\to y$,}
and {we will} call the elements of $X_1$ \emph{morphisms}. 
	\item We will use the composition symbol $\circ$ to denote the partially defined map specified by the left-injective span 
	\[
	\begin{tikzcd}
		X_1\times_{X_0} X_1 & X_2\arrow[l,hookrightarrow,"{(d_2,d_0)}"']\arrow[r,"d_1"] & X_1 
	\end{tikzcd}
	\]
	and will explicitly parenthesize multiple compositions. 
	\item We will denote the identity morphism (degenerate 1-simplex) on $x$ by $0_x$. 
\end{itemize} 
Notice that, for $f:x\to y$, $f$ can be composed with $0_x$ and $0_y$, and  $0_x\circ f=f=f\circ 0_y$.
{G}iven a spiny, weakly 2-Segal cyclic set $X$, we also
make the following definitions: 
\begin{itemize}
	\item We define the orthocomplement $(-)^\perp$ to be the map $\tau_1:X_1\to X_1$. 
	\item We define the element $1_x$ to be $(0_x)^\perp$. 
\end{itemize}

\begin{lemma}\label{lem:conds_on_swset}
	For a spiny, weakly 2-Segal cyclic set, the following identities hold. 
	\begin{enumerate}
		\item For $f:y\to z$ and $g:x\to y$, we have that
		\[
		f\circ g=h \Longleftrightarrow g\circ h^\perp=f^\perp
		\]
		in the additional sense that if one composite is defined, so is the other.
		\item For $f\in X_1$, $(f^\perp)^\perp=f$. 
		\item For $x\in X_0$, $(1_x)^\perp=0_x$.
		\item For any $f:y\to x$ and $g:x\to y$, if $f\circ g=1_x$, then $f=g^\perp$. 
	\end{enumerate}
\end{lemma}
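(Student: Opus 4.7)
The plan is to reduce everything to a careful application of the cyclic identities relating $\tau_n$ to the face and degeneracy maps, with statement (1) being the engine that drives the other three.

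First I would dispatch (2) and (3) as warm-ups. Statement (2) is literally the cyclic identity $\tau_1^{1+1} = \id_{\langle 1 \rangle}$: we have $(f^\perp)^\perp = \tau_1^2(f) = f$. Statement (3) then follows purely formally from (2) applied to $0_x$, since $1_x$ is defined to be $(0_x)^\perp$, so $(1_x)^\perp = ((0_x)^\perp)^\perp = 0_x$.

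The substance of the lemma is (1), and here I would work at the level of a witnessing $2$-simplex. Because $X$ is spiny, the partial composition $f \circ g = h$ is realized by a \emph{unique} $\sigma \in X_2$ with $d_2\sigma = g$, $d_0 \sigma = f$, $d_1\sigma = h$. The key move is to apply $\tau_2$ to $\sigma$ and compute its three faces using the cyclic relations
\[
d_0 \circ \tau_2 = d_2, \qquad d_1 \circ \tau_2 = \tau_1 \circ d_0, \qquad d_2 \circ \tau_2 = \tau_1 \circ d_1.
\]
These yield $d_0(\tau_2 \sigma) = g$, $d_1(\tau_2 \sigma) = \tau_1(f) = f^\perp$, and $d_2(\tau_2 \sigma) = \tau_1(h) = h^\perp$, so $\tau_2 \sigma$ is a $2$-simplex realizing the partial composition $g \circ h^\perp = f^\perp$. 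The reverse implication is then obtained either by running the same argument starting from a witness for $g \circ h^\perp = f^\perp$ and iterating (using $\tau_2^3 = \id$), or more cleanly by noting that $\tau_2$ is a bijection on $X_2$, so its application sets up a bijection between $2$-simplices witnessing $f \circ g = h$ and those witnessing $g \circ h^\perp = f^\perp$. This is also where the ``additional sense'' clause is handled: each implication is witnessed by an actual $2$-simplex, so definability is transported along with the equation.

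Finally, (4) is a short consequence of (1)–(3). If $f \circ g = 1_x$, then (1) tells us that $g \circ (1_x)^\perp = f^\perp$, and (3) rewrites the left-hand side as $g \circ 0_x$, which by the unit identity $g \circ 0_x = g$ recalled just before the lemma equals $g$. Hence $g = f^\perp$, and applying $(-)^\perp$ together with (2) gives $f = (f^\perp)^\perp = g^\perp$, as desired.

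I expect the only delicate point to be keeping the indexing conventions straight between the cyclic relations as stated (with $\delta_i$, $\sigma_i$ acting covariantly on $\Lambda$) and the induced contravariant action on $X$, in particular making sure the $0$th face of $\tau_2 \sigma$ ends up being the $2$nd face of $\sigma$ rather than the other way around; once the indices are pinned down, the rest of the argument is purely mechanical.
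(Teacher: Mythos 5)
Your proof is correct and follows essentially the same route as the paper's: part (1) is established by applying $\tau_2$ to the witnessing $2$-simplex and computing its faces via the same three cyclic identities, with the reverse implication from $\tau_2^{-1}=\tau_2^2$, and parts (2)--(4) are derived exactly as in the paper (involutivity of $\tau_1$, the special case $0_x$, and an application of (1) plus the unit law). The only cosmetic differences are your remark that spininess makes the witness unique (true but not needed) and your alternative bijection phrasing for the converse in (1).
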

\begin{proof}
	Part (2) is immediate from the fact that $\tau$ is an involution, and part (3) is a special case of (2). 
	
	To see (1), let $\alpha\in X_2$ be the witness to the composition $f\circ g=h$, i.e., $d_0(\alpha)=f$, $d_2(\alpha)=g$, and $d_1(\alpha)=h$. Then applying the automorphism $\tau_2$ to $\alpha$ yields an element $\beta=\tau_2(\alpha)$ with 
	\[
	d_0(\beta)= d_0(\tau_2(\alpha))=d_2(\alpha)=g
	\]
	\[
	d_2(\beta)=d_2(\tau_2(\alpha))=\tau_1(d_1\alpha)=\tau_1(h)=h^\perp 
	\]
	and 
	\[
	d_1(\beta)=d_1(\tau_2(\alpha))=\tau_1(d_0(\alpha))=\tau_1(f)=f^\perp 
	\]
	so that $g\circ h^\perp=f^\perp$. The reverse implication follows by applying $\tau_2^{-1}=\tau_2^2$. 
	
	Finally, to see part (4), we can apply (1) to the equation $f\circ g=1_x$ to get $g\circ 1_x^\perp=f^\perp$. Since $1_x^\perp=0_x$ is the identify, this means $g=f^\perp$, or, equivalently, $f=g^\perp$ as desired. 
\end{proof}

\begin{remark}
	Note that one upshot of the discussion above is that, allowing ``associativity'' to mean ``weak associativity'', our spiny weakly 2-Segal sets already satisfy the first three conditions in the definition of effect algebroids, {Definition \ref{def:effect algebroid}.}
\end{remark}

\subsection{The zero-in-one law}
\label{sec:zero one law}

{The next property we consider is the zero-in-one-law. 
  From Lemma~\ref{lem:conds_on_swset}, we see that this condition 
  is equivalent to the absence of inverses. 
  Let us recall the zero-in-one-law from 
  Definition~\ref{def:effect algebroid}:}
\begin{center}
	\emph{For $f:x\to y$, if $1_y\circ f$ or $f\circ 1_x$ is defined, then $f=0_x$ (and $x=y$).}
\end{center}
Applying condition (1) of Lemma \ref{lem:conds_on_swset} twice, we find that
\[
f\circ 1_x =h \Longleftrightarrow 1_x\circ h^\perp=f^\perp \Longleftrightarrow h^\perp\circ f =0_x. 
\] 
That is, $f$ is composable with $1_x$ if and only if $f$ has a left inverse. Similarly, 
\[
1_y\circ f=h \Longleftrightarrow f\circ h^\perp=0_x.
\]
That is, $1_y$ is composable with $f$ if and only if $f$ has a right inverse. Thus, the zero-in-one law becomes 
\begin{center}
	\emph{If $f$ has a left or a right inverse, then $f$ is an identity.}
\end{center}
These conditions are particularly appealing, since they can be written without reference to the cyclic structure, that is, using only morphisms in the simplex category. The set of pairs of morphisms $(f,g)$ with $f$ left inverse to $g$ and $g$ right inverse to $f$ is the pullback of the diagram
\[
\begin{tikzcd}[column sep=huge,row sep=large]
	& X_2 \arrow[d,"d_1"]\\
	X_0\arrow[r,"s_0"'] & X_1
\end{tikzcd}
\]
The zero-in-one law is thus equivalent to requiring that the space of such pairs is precisely the space of pairs $(0_x,0_x)$ of identities on some object $X$. Thus, the zero-in-one-law is equivalent to requiring that the diagram 
\begin{equation}\label{eq:Z-I-O}
	\begin{tikzcd}[column sep=huge,row sep=large]
		X_0 \arrow[r,"s_0\circ s_0"]\arrow[d,equal] & X_2 \arrow[d,"d_1"]\\
		X_0\arrow[r,"s_0"'] & X_1 
	\end{tikzcd}
\end{equation}
is a pullback. {This is precisely the condition implementing the zero-in-one law in Theorem \ref{thm:characterization fo EAd}.}

 
{In particular, in line with our invertibility discussion in Section \ref{sec:invertibility},} we will call $M$ \emph{inverseless} if $m\in M$ having a left or right inverse implies that $m=1$.   
We can thus reformulate the preceding 
{paragraph}
in the following form.

\begin{lemma}
	For a partial monoid $(M,\cdot,1)$, the following are equivalent.
	\begin{enumerate}
		\item $M$ is inverseless. 
		\item The square (\ref{eq:Z-I-O}) is {a} pullback.
		\item The zero-in-one law holds in $M$. 
	\end{enumerate}
\end{lemma}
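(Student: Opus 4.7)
The plan is to chase each condition through the explicit description of $X = N(M)$. Since $M$ is a partial monoid, the nerve is reduced, so $X_0 \cong \ast$, $X_1 \cong M$, and $X_2 \cong M_2$, with $s_0 : X_0 \to X_1$ picking out the unit $1$, $d_1 : X_2 \to X_1$ given by the partial multiplication $(a,b) \mapsto a \cdot b$, and $s_0 \circ s_0 : X_0 \to X_2$ picking out the pair $(1,1)$. Everything will reduce to computing the pullback $P := X_0 \times_{X_1} X_2$.

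First I would prove $(1) \Leftrightarrow (2)$. Unwinding the pullback, $P$ is precisely the set of multiplicable pairs $(a,b) \in M_2$ with $a \cdot b = 1$, and the comparison map $X_0 \to P$ sends $\ast$ to $(1,1)$. The square is a pullback exactly when this comparison is a bijection, i.e., when the only multiplicable pair with product $1$ is $(1,1)$. Assuming $M$ is inverseless, if $a \cdot b = 1$, then $a$ has a right inverse (and $b$ has a left inverse), so $a = 1$ by inverselessness, and then $b = 1 \cdot b = 1$ by unitality. Conversely, if (2) holds and $a$ has a right inverse $b$, then $(a,b) \in M_2$ with $a \cdot b = 1$, so $(a,b) = (1,1)$, and symmetrically for left inverses.

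Next I would prove $(1) \Leftrightarrow (3)$. This is essentially a restatement: the analysis preceding the lemma in Section \ref{sec:zero one law} showed that the zero-in-one law, in the form ``if $f$ has a left or right inverse, then $f$ is an identity,'' is precisely the definition of inverselessness specialized to a one-object partial monoid. One direction is immediate: inverselessness says non-unit elements have no left or right inverses, which is the contrapositive of the zero-in-one law. For the other direction, if the zero-in-one law holds and $a$ admits a right inverse $b$ (so $a \cdot b = 1$), then $a$ is a left inverse of $b$; applying the law twice (once to $b$, once to $a$) forces $a = b = 1$.

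There is no serious obstacle here; the content is entirely definitional, and the only subtlety is the translation between the effect-algebra phrasing of the zero-in-one law (which implicitly invokes an orthocomplement) and its inverse-theoretic reformulation from Section \ref{sec:zero one law}, which is the form appropriate for a bare partial monoid. Once the pullback $P$ is identified with $\{(a,b)\in M_2 : a\cdot b = 1\}$, the three conditions collapse to a single combinatorial statement about $M$.
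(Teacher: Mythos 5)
Your proposal is correct and follows essentially the same route as the paper, whose ``proof'' is the discussion immediately preceding the lemma: identify the pullback of $X_0 \xrightarrow{s_0} X_1 \xleftarrow{d_1} X_2$ with the set of multiplicable pairs multiplying to the unit, and read the zero-in-one law in its inverse-theoretic reformulation (which, as you note, is the only form that makes sense for a bare partial monoid and is then definitionally equivalent to inverselessness). Your explicit unwinding of $N(M)$ and of the comparison map $X_0\to X_0\times_{X_1}X_2$ just makes the paper's argument more concrete; there is no gap.
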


{Generalizing this observation into a definition will be important for the definition of simplicial effects in the next section.}

\begin{definition}
{A simplicial set $X$ is said to be \emph{inverseless} if the square (\ref{eq:Z-I-O}) is a pullback.}
\end{definition}


\subsection{{The category of} simplicial effects}

{Now we are ready to state our main definition: a simplicial version of \emph{effect} that generalizes both effect algebras and effect algebroids. We will make use of the weak version of associativity introduced in Section~\ref{sec:weak associativity}, which is satisfied by our canonical example of simplicial measurements \(\displaystyle P_\hH(N\ZZ_{/d})\). We also rely on our reinterpretation of the two key properties of effect algebroids---the orthocomplement and the zero-one-law.}

\begin{definition}\label{def:simplicial effect}
	A \emph{simplicial effect} is a spiny, inverseless, weakly 2-Segal cyclic set.  
{The category of simplicial effects $\SimpEff$ is the full subcategory of $\Set_\Lambda$ consisting of simplicial effects as its objects.} 	
\end{definition}

Note that since 2-Segal cyclic sets are weakly 2-Segal, this definition subsumes Roumen's characterization of the simplicial sets corresponding to effect algebras {in Theorem \ref{thm:characterization fo EAd}.}  
{Moreover, condition (U) from Theorem \ref{thm:characterization fo EAd} can equivalently be stated as requiring that the spine map associated to the 2-simplex is injective. Once the 2-Segal conditions are also imposed, this implies that the simplicial set is spiny.}

{Our next task is to provide a simplicial effect that is not an effect algebroid.} 
{The starting point is the construction given in Definition \ref{def:EX}. First, we observe that there is a case where the resulting construction is a simplicial effect.}

\begin{proposition}\label{pro:ES1}
There is an isomorphism of simplicial sets
\[
\begin{tikzcd}
E(S^1) \arrow[r] & N(E) .
\end{tikzcd}
\]
\end{proposition}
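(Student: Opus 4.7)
The plan is to exhibit an explicit bijection on $n$-simplices for each $n$ and then verify it is natural in $[n]$.

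First, I would unpack the levelwise set $S^1_n$. Since $\Delta^1_n$ consists of the $n+2$ weakly monotone maps $[n]\to[1]$, which I label $f_0,f_1,\ldots,f_{n+1}$ by the size of the preimage of $0$, and since the two constant maps $f_0,f_{n+1}$ form all of $\partial\Delta^1_n$, the quotient $S^1_n=\{\ast,f_1,\ldots,f_n\}$ has exactly $n+1$ elements. A direct computation on these monotone maps yields the face formula $d_i(f_j)=f_j$ for $j\le i$ and $d_i(f_j)=f_{j-1}$ for $j>i$, with the convention that $f_0$ and $f_n$ in $S^1_{n-1}$ are identified with the basepoint $\ast$; an analogous explicit formula holds for the degeneracies.

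Next, I would define the candidate isomorphism at level $n$ by
\[
E(S^1)_n\longrightarrow N(E)_n, \qquad \phi\longmapsto (\phi(f_1),\ldots,\phi(f_n)),
\]
with inverse sending $(a_1,\ldots,a_n)$ to the unique $\phi:S^1_n\to E$ determined by $\phi(f_j)=a_j$ and $\phi(\ast)=(a_1+\cdots+a_n)^\perp$. Well-definedness in both directions rests on the following observation: in an effect algebra (commutative by Definition \ref{def:effect algebra}), the $(n+1)$-tuple $(\phi(\ast),\phi(f_1),\ldots,\phi(f_n))$ is multiplicable and sums to $1$ if and only if the sub-$n$-tuple $(\phi(f_1),\ldots,\phi(f_n))$ is multiplicable, the value $\phi(\ast)$ being forced to equal the orthocomplement of the partial sum. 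The former is the condition appearing in Definition \ref{def:EX}, while the latter, by Corollary \ref{cor:Segal_nerve}, is precisely membership in $N(E)_n$, so the two constructions are mutually inverse.

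Finally, I would verify naturality. Using the face map formulas from the first step together with the fact that $E$ acts on set maps by pushforward (summing over fibres), a direct computation shows that an inner face map $d_i$ with $0<i<n$ combines $\phi(f_i)$ and $\phi(f_{i+1})$ via the effect-algebra sum (as both lie in the preimage of $f_i$) while leaving $\phi(\ast)$ unchanged, matching the formula for $d_i$ on $N(E)_n$ from Corollary \ref{cor:Segal_nerve}. The outer face maps $d_0$ and $d_n$ absorb the extreme value $\phi(f_1)$, respectively $\phi(f_n)$, into the basepoint value, corresponding on the tuple side to dropping the first, respectively last, entry; the degeneracies are checked analogously, producing a unit $0\in E$ in the appropriate slot exactly when the relevant preimage in $S^1$ is empty. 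The main subtlety is tracking how the outer face maps redistribute a tuple entry into the basepoint value and correspondingly shift the orthocomplement of the partial sum; once this bookkeeping is handled, the verifications reduce to the formulas already recorded in Corollary \ref{cor:Segal_nerve}.
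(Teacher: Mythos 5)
Your proposal is correct and follows essentially the same route as the paper's proof: an explicit levelwise description of $S^1_n$ as a basepoint together with $n$ non-basepoint simplices, identification of an $n$-simplex of $E(S^1)$ with a multiplicable tuple $(a_1,\ldots,a_n)$ whose basepoint value is forced to be $(a_1+\cdots+a_n)^\perp$, and a check that the face and degeneracy maps match those of $N(E)$ from Corollary \ref{cor:Segal_nerve}. The only difference is cosmetic (you derive the face formulas from monotone maps $[n]\to[1]$ rather than quoting them), so no further comparison is needed.
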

\begin{proof}
This follows from the following description of the simplicial circle.
The set of $n$-simplices of $S^1$ is given by
$$
(S^1)_n = \left\lbrace
\begin{array}{ll}
\set{\star} & n=0 \\
\set{\star, \theta^1,\cdots,\theta^n} & n\geq 1.
\end{array}
\right.  
$$ 
The simplicial structure maps on $\theta^i$ are given by
$$
d_j(\theta^i) = \left\lbrace
\begin{array}{ll}
\theta^{i-1} & j< i{\text{ and }}  1<i \\ 
\theta^{i} & i\leq j {\text{ and }}  i<n \\
\star & \text{otherwise}
\end{array}
\right.
\;\;\;\text{ and }
\;\;\;
s_j(\theta^i) = \left\lbrace
\begin{array}{ll}
\theta^{i+1} & j< i \\ 
\theta^{i} & i\leq j.\\
\end{array}
\right.
$$ 
Any simplicial structure map sends $\star$ to itself. See, e.g., \cite[\S 2.5]{okay2023equivariant}.

For elements $a.b\in E$, we write $a\leq b$ if there exists $c\in E$ such that $(a,c)\in E_2$ and $b=a+c$.
An $n$-simplex of $E(S^1)_n$ is specified by a tuple $(a_1,a_2,\cdots,a_{n})$ where $a_i\in E$ and $\sum_i a_i \leq 1$. The value assigned to $\theta^i$ is given by $a_{i}$ and the value assigned to $\star$ is determined by this tuple, which is given by $1-\sum_i a_i$. The simplicial structure of $S^1$  can be used to show that
$$
d_j(a_1,a_2,\cdots,a_{n}) = \left\lbrace
\begin{array}{ll}
(a_2,\cdots,a_{n}) & j=0\\ 
(a_2,\cdots,a_j+a_{j+1},\cdots, a_{n}) & 0<j<n \\
(a_2,\cdots,a_{n-1}) & j=n
\end{array}
\right.
$$
and
$$
s_j(a_1,a_2,\cdots,a_{n}) = (a_1,\cdots,a_j,0,a_{j+1},\cdots,a_n).
$$ 
\end{proof}

{Since nerves of effect algebras are simplicial effects, $E(S^1)$ is a simplicial effect.
However, our motivating example $P_{\hH}(N\ZZ_{/d})$, defined as $E(N\ZZ_{/d})$ with $E=\Proj(\hH)$, lies in the opposite regime. By Proposition~\ref{prop:U(H)_P(H)} and Corollary~\ref{cor: comm nerve weak partial group}, this simplicial set is a weakly associative partial group. Thus, in contrast to simplicial effects, inverses are present.}
To construct our key example, we will address this issue with the following construction.

\begin{definition}
	Let $Y$ be a simplicial set, and $X\subset \on{tr}_2(Y)$ a simplicial subset of its 2-truncation. Define a simplicial subset $F_2^Y(X)\subset Y$ as follows. A simplex
	\[
	\begin{tikzcd}
		\sigma: &[-3em] \Delta^n \arrow[r] & Y 
	\end{tikzcd}
	\]  
	factors through $F_2^Y(X)$ if and only if the 2-truncation 
	\[
	\begin{tikzcd}
		\on{tr}_2(\sigma):&[-3em] \on{tr}_2(\Delta^n)\arrow[r] & \on{tr}_2(Y)
	\end{tikzcd}
	\]
	factors through $X$. We call $F_2^Y(X)$ the \emph{full simplicial subset of $Y$ on $X$.} 
\end{definition}

{Next, we observe that this construction preserves the properties of being spiny, weak \( 2 \)-Segal, and cyclic structures.
}

\begin{lemma}\label{lem:full_spiny}
	If $Y$ is spiny, then $F_2^Y(X)$ is spiny. 
\end{lemma}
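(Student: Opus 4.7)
The plan is to exploit the fact that $F_2^Y(X)$ is a simplicial subset of $Y$, so the 1-Segal map for $F_2^Y(X)$ is closely related to that of $Y$. Since spinyness only involves the 1-skeleton data of the simplicial set and the comparison to $n$-simplices, the property should descend to any simplicial subset that contains enough of the 1-skeleton.

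Concretely, first I would spell out that by construction of the full simplicial subset, for every $n$ the inclusion $F_2^Y(X) \hookrightarrow Y$ yields injections $F_2^Y(X)_n \hookrightarrow Y_n$ at each level, and in particular injections $F_2^Y(X)_0 \hookrightarrow Y_0$ and $F_2^Y(X)_1 \hookrightarrow Y_1$ that are compatible with the face maps $d_0, d_1$ from $X_1$ to $X_0$. This last point is what allows us to compare the spine iterated fibre products: the induced map
\[
F_2^Y(X)_1 \times_{F_2^Y(X)_0} \cdots \times_{F_2^Y(X)_0} F_2^Y(X)_1 \longrightarrow Y_1 \times_{Y_0} \cdots \times_{Y_0} Y_1
\]
is itself an injection, being obtained as a limit of injections between cones over a diagram of injections.

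Next, I would assemble the commutative square
\[
\begin{tikzcd}[column sep=large,row sep=large]
F_2^Y(X)_n \arrow[r,hookrightarrow] \arrow[d] & Y_n \arrow[d] \\
F_2^Y(X)_1 \times_{F_2^Y(X)_0} \cdots \times_{F_2^Y(X)_0} F_2^Y(X)_1 \arrow[r,hookrightarrow] & Y_1 \times_{Y_0} \cdots \times_{Y_0} Y_1
\end{tikzcd}
\]
whose vertical maps are the 1-Segal maps and whose horizontal maps are induced by the inclusion $F_2^Y(X) \hookrightarrow Y$. Commutativity is immediate from the naturality of the 1-Segal map. By assumption $Y$ is spiny, so the right vertical map is injective; the bottom horizontal map is injective by the observation of the previous paragraph. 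Hence the composite along the top-right is injective, which forces the left vertical map to be injective. This is precisely the statement that $F_2^Y(X)$ is spiny.

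There is essentially no obstacle in this argument: it is a single diagram chase relying only on the general fact that inclusions of simplicial subsets are levelwise injections and on the explicit description of the 1-Segal map as the spine projection. The only thing worth being careful about is confirming that the inclusions in degrees $0$ and $1$ do indeed hold for $F_2^Y(X)$, which is immediate from its definition since every simplex of dimension $\leq 2$ automatically satisfies the factorization condition through $X \subset \on{tr}_2(Y)$ once its image in $\on{tr}_2(Y)$ itself does.
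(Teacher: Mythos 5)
Your argument is correct and is essentially the paper's own proof: both reduce spinyness of $F_2^Y(X)$ to the same commutative square comparing the 1-Segal map of $F_2^Y(X)$ with that of $Y$ via the levelwise injections coming from the inclusion $F_2^Y(X)\hookrightarrow Y$, and conclude by injectivity of the composite. No gaps.
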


\begin{proof}
	Immediate from the commutativity of 
	\[
	\begin{tikzcd}[column sep=huge,row sep=large]
		X_1\times_{X_0}\cdots\times_{X_0}X_1 \arrow[d,hookrightarrow] & F_2^Y(X)_n\arrow[l]\arrow[d,hookrightarrow]\\
		Y_1\times_{Y_0}\cdots\times_{Y_0}Y_1   & Y_n\arrow[l,hookrightarrow]
	\end{tikzcd}
	\] 
\end{proof}

\begin{lemma}\label{lem:full_w2s}
	If $Y$ is weak 2-Segal, then $F_2^Y(X)$ is weak 2-Segal. 
\end{lemma}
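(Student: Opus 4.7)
The plan is to exploit the fact that $F := F_2^Y(X)$ is cut out of $Y$ by a condition on the 2-truncation, while the diagrams entering the weak 2-Segal condition involve only the objects $\on{Sp}^n$, the $\Delta^{\scr{T}}$, and $\Delta^n$---the first two of which are themselves at most 2-dimensional. First, I would observe that the inclusion $F \hookrightarrow Y$ induces a componentwise injective natural transformation of diagrams $\on{MS}(-,F) \Rightarrow \on{MS}(-,Y)$ on $\scr{I}_n^{\op}$. The map
\[
\on{MS}(\Delta^n,F) \longrightarrow \lim_{\underline{\scr{I}}_n^{\op}} \on{MS}(-,F)
\]
is automatically injective, since its composition with the inclusion into $\lim_{\underline{\scr{I}}_n^{\op}} \on{MS}(-,Y) \cong \on{MS}(\Delta^n,Y)$ (using weak 2-Segalness of $Y$) is nothing but the monomorphism $F_n \hookrightarrow Y_n$. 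So only surjectivity requires work.

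For surjectivity, a compatible family $\alpha = (\alpha_S)_{S \in \underline{\scr{I}}_n}$ with $\alpha_S : S \to F$ maps to a compatible family in $Y$, which by the weak 2-Segal condition for $Y$ comes from a unique $\tilde\alpha : \Delta^n \to Y$. It suffices to show that $\on{tr}_2(\tilde\alpha)$ factors through $X$, i.e., that every non-degenerate 2-simplex $\Delta^{\{i,j,k\}}$, together with its edges and vertices, maps into $X$. The key combinatorial input is that for every $0 \leq i < j < k \leq n$, the triangle $\{i,j,k\}$ appears in at least one triangulation $\scr{T}$ of $P_{n+1}$: the vertices $i,j,k$ cut $P_{n+1}$ into three sub-polygons, each of which may be triangulated independently. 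For such a $\scr{T}$, the restriction $\tilde\alpha|_{\Delta^{\scr{T}}}$ equals $\alpha_{\Delta^{\scr{T}}}$, which by hypothesis factors through $F$; restricting further to $\Delta^{\{i,j,k\}} \subset \Delta^{\scr{T}}$ shows that this 2-simplex of $\tilde\alpha$ lies in $X$. Since every 1-simplex and 0-simplex of $\Delta^n$ is, respectively, an edge or vertex of some such 2-simplex, this suffices to conclude $\on{tr}_2(\tilde\alpha) \subset X$, and hence $\tilde\alpha$ factors through $F$.

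The only real content is the combinatorial fact that every triangle of $\Delta^n$ lies in some triangulation of $P_{n+1}$; the rest is formal manipulation of limits and subobjects. This is why the hypothesis $n \geq 3$ is enough: for $n=3$ the two triangulations of $P_4$ already cover all four 2-faces of $\partial \Delta^3$, and the sub-polygon argument handles all higher $n$ uniformly. No additional assumptions on $X$ beyond its being a simplicial subset of $\on{tr}_2(Y)$ are needed.
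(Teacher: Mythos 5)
Your proof is correct and follows essentially the same route as the paper's: both arguments reduce the weak 2-Segal condition for $F_2^Y(X)$ to that of $Y$ via the observation that the test objects $\on{Sp}^n$ and $\Delta^{\scr{T}}$ jointly cover every 2-simplex of $\Delta^n$ (since every triangle lies in some triangulation of $P_{n+1}$), so that the unique extension in $Y$ automatically has 2-truncation landing in $X$. The paper packages this as the statement that $\Delta^{wn}\to\Delta^n$ is surjective on 2-truncations, which is exactly your limit-of-membrane-sets argument in different notation.
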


\begin{proof}
	Recall the simplicial sets $\Delta^{wn}$ for $n\geq 3$ constructed in the proof of {Proposition} \ref{prop:w2s_cosk}. Being weakly 2-Segal amounts to having the unique right extension property against 
	\[
	\begin{tikzcd}
		\Delta^{wn} \arrow[r] &\Delta^n. 
	\end{tikzcd}
	\]
	However, this map factors through the boundary and is surjective on 2-truncations, and so the lemma follows. 
\end{proof}

\begin{lemma}\label{lem:full_cyclic}
	Suppose that $X$ is a cyclic subset of the 2-truncation of a cyclic set $Y$. Then $F_2^Y(X)$ is a cyclic subset of $Y$. 
\end{lemma}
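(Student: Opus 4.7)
The plan is to show that the simplicial subset $F_2^Y(X) \subset Y$ is closed under the cyclic automorphisms $\tau_n \colon Y_n \to Y_n$ for every $n \geq 0$. Since $\Lambda$ is generated, as recalled in the excerpt, by the image of $\Delta$ together with the $\tau_n$'s, and $F_2^Y(X)$ is by construction closed under simplicial structure maps, this closure will suffice to make it a cyclic subset of $Y$.

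First I would fix $\sigma \in F_2^Y(X)_n$ and an arbitrary morphism $\alpha \colon [m] \to [n]$ in $\Delta$ with $m \leq 2$; to conclude that $\tau_n(\sigma) \in F_2^Y(X)_n$, it suffices to verify that $\alpha^{*}\bigl(\tau_n(\sigma)\bigr) \in X_m$. Interpreting this pullback inside the ambient cyclic set $Y$ gives
\[
\alpha^{*}\bigl(\tau_n(\sigma)\bigr) \;=\; (\tau_n \circ \alpha)^{*}\sigma,
\]
where $\tau_n \circ \alpha$ is now viewed as a morphism $\langle m\rangle \to \langle n\rangle$ in $\Lambda$, which is not in general a simplicial morphism.

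Next, I would invoke the standard decomposition of $\Lambda$-morphisms. Using the relations $\tau_n \circ \delta_i = \delta_{i-1} \circ \tau_{n-1}$ and $\tau_n \circ \sigma_i = \sigma_{i-1} \circ \tau_{n+1}$, together with their boundary cases and $\tau_m^{m+1} = \id$, any occurrence of $\tau$ can be pushed past simplicial generators to produce a factorization $\tau_n \circ \alpha = \gamma \circ \tau_m^{k}$ with $\gamma \colon [m] \to [n]$ in $\Delta$ and $k \in \{0, 1, \ldots, m\}$. Substituting this factorization yields $\alpha^{*}\bigl(\tau_n(\sigma)\bigr) = \tau_m^{k}\bigl(\gamma^{*}\sigma\bigr)$.

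Finally, since $\sigma \in F_2^Y(X)_n$, its 2-truncation factors through $X$ by definition, so $\gamma^{*}\sigma \in X_m$. Because $X$ is assumed to be a cyclic subset of $\on{tr}_2(Y)$, it is closed under the action of $\tau_m$ for each $m \leq 2$, whence $\tau_m^{k}\bigl(\gamma^{*}\sigma\bigr) \in X_m$, which is exactly what was required. The only real substance lies in the decomposition step, whose correctness follows directly from the presentation of $\Lambda$ recorded in the excerpt; the rest of the argument is bookkeeping around the definition of $F_2^Y(X)$.
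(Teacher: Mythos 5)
Your argument is correct and is essentially the paper's proof written out in full: the paper's one-line appeal to ``the cyclic identities'' showing that every $2$-dimensional face of $\tau_n\sigma$ is $\tau_2^k$ of a $2$-simplex of $\sigma$ is exactly what your normal-form decomposition $\tau_n\circ\alpha=\gamma\circ\tau_m^{k}$ establishes. Your version is more careful (it handles all $m\leq 2$ uniformly and makes the use of the presentation of $\Lambda$ explicit), but the underlying idea is the same.
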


\begin{proof}
	From the cyclic identities, it follows that if $\sigma$ lies in $F_2^Y(X)$, then every 2-dimensional face of $\tau_n\sigma$ is $\tau_2^k$ of a 2-simplex in $X$ for some $k$. Thus, the 2-truncation of $\tau_n \sigma$ factors through $X$, as desired.  
\end{proof}

 	Now, {we are ready to introduce our example.}

\begin{construction}\label{const:key example}
 Let $\scr{H}=\CC^3\otimes\CC^3$, and set $Y=P_{\scr{H}}(N\ZZ_{/3})$. Note that this is weakly 2-Segal, spiny, and reduced by {Proposition} \ref{prop:U(H)_P(H)}. We can write an $n$-simplex $\Pi$ of $Y$ as an indexed set of projectors $\{\Pi^{a_1,\ldots,a_n}\}_{a_i\in \ZZ_{/3}}$. 
	Define $X\subset \on{tr}_2(Y)$ by 
	\[
	\begin{aligned}
		X_0&=Y_0\\
		X_1&=Y_1\\
		X_2&=\{\{\Pi^{ab}\}_{a,b\in\ZZ_{/3}}\mid \Pi^{11}=\Pi^{21}=\Pi^{12}=0\}.
	\end{aligned}
	\] 
Set $Z=F_2^Y(X)$. 
\end{construction}

{
The construction is such that the unwanted property of having inverses is eliminated while the weak $2$-Segal property is preserved. In addition, the resulting simplicial set is not $2$-Segal, hence lies outside Roumen's category of effect algebroids.}

\begin{figure}[htb]
	\begin{center}
		 \includegraphics[width=0.9\textwidth]{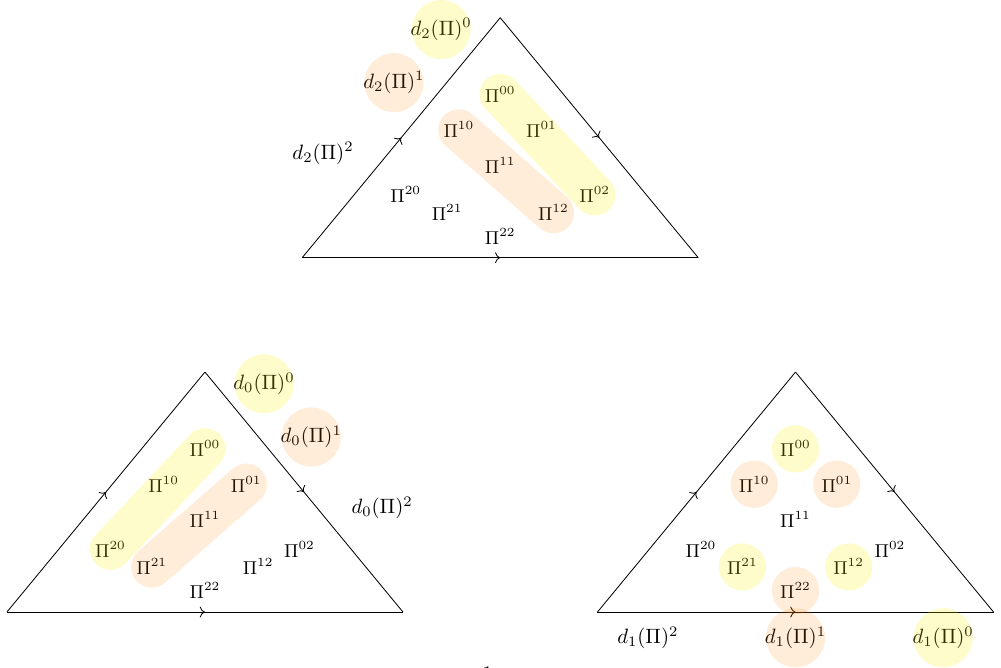}
	\end{center}
	\caption{The three face maps of a 2-simplex in $P_{\scr{H}}(N(\ZZ/3))$. Projectors inside the simplex highlighted in the same color are summed to yield the projector outside the simplex highlighted in that color. 
	}
\end{figure}

\begin{proposition}
Let $Z$ denote the simplicial effect defined in  Construction \ref{const:key example}. Then
\begin{enumerate}
\item $Z$ is a simplicial effect (Definition \ref{def:simplicial effect}).
\item $Z$ is not an effect algebroid (Definition \ref{def:effect algebroid}).
\end{enumerate}
\end{proposition}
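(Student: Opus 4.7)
The plan is to verify the four defining properties of a simplicial effect by combining the preservation lemmas of this section with a direct check of inverselessness. Spineyness and weak $2$-Segalness of $Z = F_2^Y(X)$ are inherited from $Y = P_{\mathcal{H}}(N\mathbb{Z}_{/3})$ via Lemmas \ref{lem:full_spiny} and \ref{lem:full_w2s}, since $Y$ is spiny and weakly $2$-Segal (the latter by Corollary \ref{cor: comm nerve weak partial group} through the isomorphism of Proposition \ref{prop:U(H)_P(H)}). For the cyclic structure I invoke Lemma \ref{lem:full_cyclic}: sub-simplicial closure of $X \subset \on{tr}_2(Y)$ holds because the degeneracies $s_0, s_1 \colon Y_1 \to Y_2$ only output indices with a zero coordinate, which automatically lie in $X_2$; cyclic closure holds because $\{(1,1), (2,1), (1,2)\}$ is precisely a $\tau_2$-orbit, as displayed in Figure \ref{fig:cyclic-structure}. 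For inverselessness, if $\Pi \in Z_2$ satisfies $d_1(\Pi) = s_0(*)$, then the push-forward formula $d_1(\Pi)^y = \sum_{a+b=y} \Pi^{a,b}$ combined with the $X_2$-vanishing reduces to $\Pi^{0,0} = \mathbb{1}$, $\Pi^{0,1} + \Pi^{1,0} + \Pi^{2,2} = 0$, and $\Pi^{0,2} + \Pi^{2,0} = 0$. Positivity of projectors then forces all remaining components to vanish, so $\Pi = s_0 s_0(*)$.

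\textbf{Part (2) strategy.} By Theorem \ref{thm:characterization fo EAd} and the satisfaction of conditions (U) (from spineyness) and (Z) (from inverselessness) established in Part (1), showing $Z$ is not an effect algebroid reduces to exhibiting a failure of the $2$-Segal condition. The key observation is that $Y$ is only \emph{weakly} $2$-Segal (Corollary \ref{cor: comm nerve weak partial group}), not $2$-Segal: a compatible pair $((U_1, U_2), (U_1 U_2, U_3))$ in $Y_2 \times_{Y_1} Y_2$ records only that $U_1, U_2$ commute and that $U_1 U_2, U_3$ commute, which does not force $U_1$ and $U_3$ to commute; consequently the pair need not lift to a pairwise-commuting triple in $Y_3$. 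The plan is to exhibit such an obstructed pair whose two components additionally lie in $X_2$, thereby witnessing the failure of the $2$-Segal map $Z_3 \to Z_2 \times^{\squaredivright}_{Z_1} Z_2$ at level $3$.

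Concretely, on $\mathcal{H} = \mathbb{C}^3 \otimes \mathbb{C}^3$ set $A = \mathrm{diag}(1, \omega, \omega^2)$, $U_1 = A \otimes I$, and $U_2 = |e_0\rangle\langle e_0| \otimes A + (|e_1\rangle\langle e_1| + |e_2\rangle\langle e_2|) \otimes I$. A direct calculation shows $(U_1, U_2) \in X_2$, and that the eigenspaces of $U_1 U_2$ at $1$, $\omega$, $\omega^2$ are respectively $e_0 \otimes e_0$, the four-dimensional subspace $(e_0 \otimes e_1) + (e_1 \otimes \mathbb{C}^3)$, and the four-dimensional subspace $(e_0 \otimes e_2) + (e_2 \otimes \mathbb{C}^3)$. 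Now choose $U_3$ to act as the identity on the first two of these eigenspaces, and on the third as a unitary with eigenvalues in $\{1, \omega^2\}$ that mixes the vectors $e_0 \otimes e_2$ and $e_2 \otimes e_0$ (for instance, obtained by conjugating a diagonal such unitary by a rotation in the plane spanned by these two vectors). Then $U_3$ commutes with $U_1 U_2$ by construction and satisfies $(U_1 U_2, U_3) \in X_2$ by the spectral constraints, yet $U_3$ fails to commute with $U_1$ because it mixes two distinct $U_1$-eigenspaces. The pair $((U_1, U_2), (U_1 U_2, U_3)) \in Z_2 \times^{\squaredivright}_{Z_1} Z_2$ therefore admits no lift in $Y_3$, and a fortiori none in $Z_3$, so the $2$-Segal condition fails and $Z$ is not an effect algebroid.

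The main obstacle is the careful choice of $U_3$: the vanishing conditions of $X_2$ applied to $(U_1 U_2, U_3)$, together with the requirement that $U_3$ commute with $U_1 U_2$, pin down $U_3$'s behaviour on two of the three $U_1 U_2$-eigenspaces, but leave just enough room in the four-dimensional $\omega^2$-eigenspace to arrange a mixing that breaks commutation with $U_1$ without introducing a forbidden $\omega$-eigenvalue. Once such a $U_3$ is in hand, the lack of a lift in $Y_3$ is automatic, since $Y_3$ consists precisely of pairwise commuting triples.
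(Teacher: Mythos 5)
Your proof is correct. Part (1) follows the paper's argument essentially verbatim: the same appeals to Lemmas \ref{lem:full_spiny}, \ref{lem:full_w2s}, and \ref{lem:full_cyclic} (your extra observation that $\{(1,1),(2,1),(1,2)\}$ is a single $\tau_2$-orbit is exactly why the paper's one-line remark about cyclic invariance works), and the same three relations forcing a $2$-simplex over the degenerate edge to be totally degenerate.

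Part (2) uses the same strategy as the paper --- exhibit a compatible pair of $2$-simplices of $Z$ over one triangulation of $P_4$ that cannot extend to a $3$-simplex because the relevant unitaries fail to commute --- but with a genuinely different explicit witness. The paper builds two projective measurements $\Pi,\Psi$ out of rank-one projectors $\Gamma^{ab}$ and the vectors $|\pm\rangle$, verifies $d_2(\Psi)=d_1(\Pi)$, and only then passes to unitaries $A,B,C$ to check non-commutativity. You instead start from unitaries: $U_1=A\otimes I$ and a diagonal $U_2$ give joint spectral projectors supported on indices $\{00,01,02,10,20\}$ (so $(U_1,U_2)\in X_2$), and your $U_3$ --- identity on the $1$- and $\omega$-eigenspaces of $U_1U_2$, with spectrum in $\{1,\omega^2\}$ on the $\omega^2$-eigenspace and mixing $e_0\otimes e_2$ with $e_2\otimes e_0$ --- satisfies $\Pi^1_{U_3}=\zero$ and $\Pi^2_{U_3}\leq\Pi^2_{U_1U_2}$, which is exactly what forces $(U_1U_2,U_3)\in X_2$, while the mixing of two distinct $U_1$-eigenspaces kills $[U_1,U_3]=\one$. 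Since any lift in $Y_3=N(\ZZ_{/3},U(\hH))_3$ would be a pairwise commuting triple, no lift exists, and non-$2$-Segality plus Theorem \ref{thm:characterization fo EAd} gives the conclusion. Your route is arguably cleaner to verify (one never has to check normalization of a hand-built measurement), at the cost of being less explicit about the actual projectors involved. One cosmetic quibble: the pair $((U_1,U_2),(U_1U_2,U_3))$ lives over the triangulation with diagonal $\{0,2\}$, which in the paper's Figure \ref{fig:triangulations} convention is $\squarediv$ rather than $\squaredivright$; this does not affect the argument, since failing the $2$-Segal map for any one triangulation suffices.
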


\begin{proof}
By definition, $Z$ is reduced, by {Lemma} \ref{lem:full_spiny}, $Z$ is spiny, and by {Lemma} \ref{lem:full_w2s}, $Z$ is weak 2-Segal. 
It follows from {Lemma} \ref{lem:full_cyclic} that $Z$ is cyclic, since the conditions defining $X_2$ are preserved by the cyclic automorphisms.  
	To see that $Z$ is inverseless, we consider a 2-simplex $\Pi$ representing an element in the pullback of diagram (\ref{eq:Z-I-O}). This means that $d_1(\Pi)$ is the degenerate 1-simplex given by 
	\[
	d_1(\Pi)^0=\mathbb{1},\;\; d_1(\Pi)^1=\mathbb{0},\;\; d_1(\Pi)^2=\mathbb{0}. 
	\]
	Since, by construction, $\Pi^{11}=\Pi^{21}=\Pi^{12}=0$, the definition of the face maps yields the relations 
	\[
	\begin{aligned}
		\mathbb{1} &= \Pi^{00} \\
		\mathbb{0} &= \Pi^{22}+\Pi^{10}+\Pi^{01}\\
		\mathbb{0} &= \Pi^{20}+\Pi^{02}
	\end{aligned}
	\]
	and thus, by normalization, 
	\[
	\Pi^{ij}=\begin{cases}
		\mathbb{1} & ij=00\\
		\mathbb{0} & \text{else}.  
	\end{cases}
	\]
	This implies that $\Pi$ is a totally degenerate 2-simplex, and so diagram (\ref{eq:Z-I-O}) is pullback, as desired.

	Finally, we aim to show that $Z$ is \emph{not} 2-Segal. We first fix some notation. Let $\{|a\rangle\}_{a\in{0,1,2}}$ be a basis for $\CC^3$, and write $|ab\rangle=|a\rangle\otimes|b\rangle$ for the corresponding basis of $\CC^3\otimes\CC^3$. Write $\Gamma^{ab}:=|ab\rangle\langle ab|$ for the projector which projects onto the basis vector $|ab\rangle$. We then define a 2-simplex  $\Pi$ of $Z$ by setting 
	\[
	\Pi^{ab}:= \begin{cases}
		\mathbb{0} & ab=11,21,12\\
		\Gamma^{01}+\Gamma^{11}+\Gamma^{21}+\Gamma^{12} & ab=01\\
		\Gamma^{ab} & \text{else}.
	\end{cases}
	\]
	Since the $|ab\rangle$ form a basis of $\scr{H}$,it follows immediately that normalization holds.  
	
	Now, define vectors 
	\[
	\begin{aligned}
		|+\rangle & =\frac{1}{\sqrt{2}}\left(|02\rangle +| 20\rangle \right)\\
		|-\rangle & =\frac{1}{\sqrt{2}}\left(|02\rangle - 20\rangle \right)\\
	\end{aligned}
	\] 
	and write $\Gamma^+$  and $\Gamma^-$ for the corresponding projectors.  
	We then define a 2-simplex $\Psi$ of $Z$ given by
	\[
	\Psi^{ab}:= \begin{cases}
		\Gamma^+ & ab=20\\
		\Pi^{01} & ab=10 \\
		\Gamma^{00} & ab=01\\
		\Gamma^{-} & ab=22\\
		\mathbb{0}& \text{else}.
	\end{cases}
	\]
	It is a quick computation to see that $d_2(\Psi)=d_1(\Pi)$, and so these two 2-simplices form a map $\Delta^{\squarediv}\to Z$. However, setting $\omega= e^{2\pi i/3}$. The unitaries $A$, $B$, and $C$ corresponding to $d_2(\Pi)$, $d_0(\Pi)$ and $d_0(\Psi)$ under the isomorphism of {Proposition} \ref{prop:U(H)_P(H)} are 
	\[
	\begin{aligned}
		A & = (\Gamma^{20}+\Gamma^{22})+\omega \Gamma^{10}+\omega^2 (\Pi^{01}+\Gamma^{00}+\Gamma^{02})\\ 
		B & = (\Gamma^{00}+\Gamma^{10}+\Gamma^{20})+\omega \Pi^{01} + \omega^2(\Gamma^{02}+\Gamma^{22}) \\
		C &= (\Pi^{01}+\Gamma^{10}+\Gamma^{22}+\Gamma^+) +\omega \Gamma^{00} +\omega^2\Gamma^{-}.
	\end{aligned}
	\]
	It is a quick computation to see that $B$ and $C$ do not commute, and thus the map $\Delta^{\squarediv}\to Z$ we have constructed cannot be extended to a 3-simplex. As such, $Z$ is not 2-Segal. 

\end{proof}



\subsection{{Cyclic cohomology and states}}

In this section we extend the observations of Roumen in \cite{RouCoh} {on the cohomology of effect algebras} to simplicial effects. 
The notion of simplicial effects introduced in {Definition \ref{def:simplicial effect}} extends effect algebroids, and in particular, effect algebras. 
We begin by extending  
{the notion of a state (Definition \ref{def:state})}
to cyclic sets.
Recall that the nerve functor $N:\Eff \to \SimpEff$ is fully faithful. The nerve $N[0,1]$ will play the role of the unit interval.

\begin{definition}\label{def:state cyclic set}
A cyclic set morphism $\varphi:X \to {N[0,1]}$ is called a \emph{state} {on $X$}. We will write $\St(X)$ to denote the set of states {on $X$}.
\end{definition}

Unraveling this definition we find that $\St(X)$ consists of functions 
\[
\begin{tikzcd}
\varphi:X_1 \arrow[r] & {[0,1]}  
\end{tikzcd}
\]

such that 
\begin{itemize}
\item  $\varphi(\tau_1x) = 1-\varphi(x)$,
\item  $\varphi(d_1\sigma) = \varphi(d_2\sigma) +\varphi(d_0\sigma)$ for all $\sigma\in X_2$.
\end{itemize}
For $x\in X_0$ let us write $0_x=s_0(x)$ and $1_x=\tau_1(0_x)$.
The second condition implies $\varphi(0_x)=0$ and $\varphi(1_x)=1$.
{In particular, for an effect algebra $E$ we have $\St(E)=\St(NE)$.}

We will use Connes' definition of cyclic cohomology since we are interested in $\RR$ as the coefficient ring. 
Given a cyclic set $X$ we can define a cyclic $\RR$-module $\RR[X]$ where $\RR[X]_n$ is the free $\RR$-vector space generated on $X_n$.
There is an associated chain complex $C_\bullet(X)$ defined in the usual way using the underlying simplicial structure.
The cyclic action in each degree induces an action on the chain complex.
Dually there is a cochain complex $C^\bullet(X)$ which is equipped with a cyclic action.
Following \cite{Loday}, we also consider the sign correction in this action.
More explicitly, we can identify elements of $C^n(X)$ with functions $f:X_n\to \RR$. The cyclic action is defined by
$$
t_n\cdot f(x) = (-1)^{n} f(\tau(x)).
$$
The cyclic cohomology of $X$ is defined as the cohomology of the cochain complex $C^\bullet_\lambda(X)$ which, in each degree, consists of the cochains invariant under the cyclic action, i.e.,
\[
C^n_\lambda(X) = \set{f:X_n\to \RR:\, t_n\cdot f =f }.
\]  
The $n$-th cyclic cohomology of $X$ is denoted by $\HC^n(X)$.
We are interested in the first cyclic cohomology group. Unraveling the definition we find that it consists of functions $f:X_1\to \RR$ such that
\begin{itemize}
\item $f(\tau_1x) = -f(x)$,
\item $f(d_1\sigma) = f(d_2\sigma) +f(d_0\sigma)$ for all $\sigma\in X_2$.
\end{itemize} 

\begin{theorem}
Let $X$ be a cyclic set satisfying the following two conditions:
\begin{enumerate}
\item $\St(X)$ is non-empty.
\item The difference map 
	\[
		\begin{tikzcd}[row sep=0em]
			\Set_\Delta(X,N\RR_{\geq 0})^{\times 2}\arrow[r] & \Set_\Delta(X,N\RR)  \\
			(f_1,f_2) \arrow[r,mapsto] & f_1-f_2
		\end{tikzcd}
		\]
is surjective.
\end{enumerate}
Then $\HC^1(X)$ is the smallest vector space containing $\St(X)$.
\end{theorem}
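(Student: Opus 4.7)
The plan is to define a natural map $\iota \colon \St(X) \to \HC^1(X)$ and show its $\RR$-linear span exhausts $\HC^1(X)$. Using condition (1), fix a basepoint state $\varphi_0 \in \St(X)$. For each $\varphi \in \St(X)$, the difference $\varphi - \varphi_0 \colon X_1 \to \RR$ belongs to $Z^1_\lambda(X)$: antisymmetry $(\varphi - \varphi_0)(\tau_1 x) = -(\varphi - \varphi_0)(x)$ is immediate from $\varphi(\tau_1 x) = 1 - \varphi(x)$, while the cocycle condition is inherited linearly from additivity of states on $2$-simplices. Set $\iota(\varphi) := [\varphi - \varphi_0] \in \HC^1(X)$.

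The technical heart of the argument is the following observation. For any $g \in V := \Set_\Delta(X, N\RR)$ and any $f \colon x \to y$ in $X_1$, the cyclic identities produce $2$-simplices $\tau_2^2(s_0 f)$ and $\tau_2(s_1 f)$ with face data $(d_2, d_0, d_1) = (f, f^\perp, 1_x)$ and $(f^\perp, f, 1_y)$ respectively. The $2$-cocycle condition for $g$ then yields
\[
g(1_x) \;=\; g(f) + g(f^\perp) \;=\; g(1_y).
\]
Hence $y \mapsto g(1_y)$ is constant on each connected component of the graph on $X_0$ with edge set $X_1$; denote its value on the $i$-th component by $c_g^{(i)}$. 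When $g$ lies in the cone $V_+ := \Set_\Delta(X, N\RR_{\geq 0})$, each nonzero $c_g^{(i)}$ makes $g/c_g^{(i)}$ into a state on the $i$-th component (orthocomplement follows from the identity above, while non-negativity and the upper bound $g(f) \leq c_g^{(i)}$ are automatic), while $c_g^{(i)} = 0$ forces $g \equiv 0$ on that component.

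Surjectivity of the linear span of $\iota$ now follows. Given $[f] \in \HC^1(X)$ represented by $f \in Z^1_\lambda(X) \subset V$, condition (2) decomposes $f = g_+ - g_-$ with $g_\pm \in V_+$. The antisymmetry $f + f\circ \tau_1 = 0$, combined with the key identity applied to $g_\pm$, forces $c_{g_+}^{(i)} = c_{g_-}^{(i)} =: c^{(i)}$ on each component. Choose states $\varphi_\pm^{(i)} \in \St(X)$ whose restriction to the $i$-th component is $g_\pm/c^{(i)}$ (discarding components with $c^{(i)} = 0$, on which $f$ already vanishes) and whose restriction to every other component agrees with $\varphi_0$; this piecewise assignment is a state because the state axioms are local to a single component. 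Then
\[
f \;=\; \sum_i c^{(i)}\bigl(\varphi_+^{(i)} - \varphi_-^{(i)}\bigr),
\]
so $[f] = \sum_i c^{(i)}\bigl(\iota(\varphi_+^{(i)}) - \iota(\varphi_-^{(i)})\bigr)$ lies in the $\RR$-span of $\iota(\St(X))$. Together with the first paragraph, this shows that $\HC^1(X)$ is the smallest vector space containing the image of $\St(X)$.

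The main obstacle is the globalization in the third paragraph. Condition (2) only provides the decomposition $f = g_+ - g_-$ as simplicial maps, so the summands need not be antisymmetric; the identity $g(f) + g(f^\perp) = g(1_x)$ is precisely what lets us extract cyclic information from them. Assembling the componentwise normalizations $g_\pm/c^{(i)}$ into honest global states on $X$ requires careful use of $\varphi_0$ on complementary components, and one must verify that the scalars $c_{g_\pm}^{(i)}$ agree on every component simultaneously — a consequence of the antisymmetry of $f$, but one which must be made uniform across potentially infinitely many connected components.
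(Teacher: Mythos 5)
Your argument runs on the same engine as the paper's: fix a basepoint state $\varphi_0$, send $\varphi\mapsto\varphi-\varphi_0$ into the cyclic cocycles, use hypothesis (2) to split a cocycle as a difference of non-negative simplicial maps, and normalize each summand by its value on the unit edge to manufacture states. Your key identity $g(1_x)=g(f)+g(f^{\perp})=g(1_y)$, extracted from $\tau_2^2(s_0f)$ and $\tau_2(s_1f)$, is correct (the face computations check out against the cyclic relations) and is precisely the verification the paper outsources to Roumen: it is what makes $g(1)$ well defined and $g/g(1)$ a state. The genuine difference is in what you prove. The paper reads ``smallest vector space containing $\St(X)$'' as a universal property --- for every affine injection $j\colon\St(X)\to V$ there is a map $k\colon\HC^1(X)\to V$ with $j=k\circ i$, built by decomposing $g+\varphi_0$ (not $g$), so that the coefficients $g_1(1)$ and $-g_2(1)$ sum to $1$ and $k(g)$ is an affine combination of $j$-values --- whereas you prove that $\HC^1(X)$ is the linear span of $i(\St(X))$. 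For reduced $X$ these amount to the same theorem: your spanning statement gives uniqueness of $k$, and well-definedness of $k$ on the span follows because $\St(X)$ is convex and $j$ is affine; but you should add a sentence making that bridge if the universal property is the intended reading.

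The component issue you flag at the end is a real gap, not a bookkeeping chore: if $X$ has infinitely many connected components, your formula $f=\sum_i c^{(i)}(\varphi_+^{(i)}-\varphi_-^{(i)})$ is an infinite sum, and the finite-span claim is actually false in that generality. For instance, take $X$ to be an infinite disjoint union of copies of the nerve of the four-element Boolean algebra: hypotheses (1) and (2) hold componentwise and hence globally, $\HC^1(X)\cong\R^{\N}$, but finite linear combinations of differences of states land only in the bounded sequences. So no amount of ``making the scalars uniform'' will rescue the statement as you have phrased it. The way out is that the theorem --- like the paper's own proof, which writes $g_i(1)$ as though there were a single unit, and like Roumen's original --- is implicitly about reduced (hence connected) cyclic sets; there your sum has a single term and your argument closes. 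Add that hypothesis explicitly and the proof is complete.
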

\begin{proof}
The argument is similar to \cite[Theorem 4.3]{RouCoh}. For a state $\varphi_0$ on $X$ we define an affine map $i:\St(X)\to \HC^1(X)$ by $\varphi\mapsto \varphi-\varphi_0$. Then given an affine injection $j:\St(X)\to V$, where $V$ is an arbitrary $\RR$-vector space, we define $k: \HC^1(X)\to V$ by the following formula: Given $g\in \HC^1(X)$, first express $g+\varphi_0$ as a difference $g_1-g_2$ where $g_i:X\to N\RR_{\geq 0}$ are simplicial set maps. If $g_i(1)\neq 0$ then define $\tilde g_i(x) = g_i(x)/g_i(1)$. Finally, set
\[
{k(g)} = \left\lbrace
\begin{array}{ll}
 g_1(1)j(\tilde g_1) - g_2(1) j(\tilde g_2)  & g_i(1)\neq 0,\; i=1,2 \\
 - g_2(1) j(\tilde g_2)  & g_1(1)= 0,\; g_2(1)\neq 0\\
 g_1(1) j(\tilde g_1)  & g_1(1)\neq 0,\; g_2(1)= 0\\  
 0 & \text{otherwise.}
\end{array}
\right.
\]
Showing that definition of $k$ does not depend on the difference decomposition and that $j=k\circ i$ follows \cite[Theorem 4.3]{RouCoh}.
\end{proof}

\begin{example}\label{ex:states}
{We now present two examples illustrating contrasting behaviors of states and first cyclic cohomology in different cyclic sets.}
\begin{enumerate}
\item Let us consider the cyclic set $P_\hH(N\ZZ/{_d})$. We have that
\[
\St(P_\hH(N\ZZ/{_d}))=\emptyset \;\;\; \text{ and }\;\;\; \HC^1(P_\hH(N\ZZ/{_d}))=0. 
\]
This can be observed by identifying this cyclic set with \( N(\mathbb{Z}_{/d}, U(\mathcal{H})) \) using Proposition \ref{prop:U(H)_P(H)} and noting that any state represented by a function \( f: U(\mathcal{H}) \to [0,1] \) satisfies \( f(A^d) = f(\one) = 0 \). This implies that \( f \) multiplied by \( d \) is zero.

\item Our next example supports non-trivial states. {For details see \cite{sdist}. Let $S^1$ be the simplicial circle.}
For $\dim(\hH)\geq 3$ we have
\[
\St(P_\hH(S^1))=\Den(\hH) \;\;\; \text{ and }\;\;\; \HC^1(P\hH(S^1))=\Herm(\hH)
\]
where $\Den(\hH)$ and $\Herm(\hH)$ denote the density operators and Hermitian operators, respectively. In details, Proposition \ref{pro:ES1} gives $P_\hH(S^1)\cong N(\Proj(\hH))$. Therefore $\St(P_\hH(S^1))$ can be identified with the states on the effect algebra of projectors. 
Then Theorem \ref{thm:Gleason} gives a bijection
	\[
		\begin{tikzcd}[row sep=0em]
		\Den(\hH) \arrow[r] & \St( P_\hH S^1 ) \\
			\rho \arrow[r,mapsto] & (\Pi \mapsto \Tr(\rho\Pi))
		\end{tikzcd}
		\]
A version of Gleason's theorem for Hermitian matrices gives a similar identification for the first cyclic cohomology; see \cite[Chapter 3]{dvurecenskij2013gleason}.
\end{enumerate}
\end{example}


\subsection{{States on simplicial effects}}

{Next, we show that our key example (Construction \ref{const:key example}) supports non-trivial states.}
Recall that $Z$ is the full simplicial subset of $P_\hH N\ZZ_{/3}$ on the $2$-truncation $X$ whose $2$-simplices consist of projective measurements $\Pi=\set{\Pi^{ab}}_{a,b}$  
satisfying 
\begin{equation}\label{eq:conditions}
\Pi^{11}=\Pi^{21}=\Pi^{12}=\zero.
\end{equation}

We will use the identification in Proposition \ref{prop:U(H)_P(H)}.
A $2$-simplex can be described as a pair $(A,B)$ of commuting unitary matrices such that $A^3=B^3=\one$. Simultaneous diagonalization gives 
\begin{align*}
A &= \Pi_A^0 + \omega \Pi_A^1 + \omega^{2} \Pi_A^{2} \\
B &= \Pi_B^0 + \omega \Pi_B^1 + \omega^{2} \Pi_B^{2}. 
\end{align*}
Using $\Pi^{ab} = \Pi^a_A \Pi^b_B$ the additional condition in Equation \ref{eq:conditions} imply that 
\[
\Pi^1_B \perp \Pi^a_A\;\;\; \text{ and }\;\;\;
\Pi^1_A \perp \Pi^a_B
\] 
for $a=1,2$. Let us write $\Pi\leq \Pi'$ if the image of $\Pi$ is contained in the image of $\Pi'$. The first orthogonality relations imply that  $\Pi_B^1\leq \Pi_A^0$ and $\Pi_A^{2}\leq \Pi_B^{2}$. Similarly the second gives $\Pi_A^1\leq \Pi_B^0$ and $\Pi_B^{2}\leq \Pi_A^{2}$. Combining these two conditions we obtain
\begin{align*}
\Pi_B^0 &= \Pi_A^1 +\Pi \\
\Pi_B^1 &= \Pi_A^0 -\Pi\\
\Pi_B^{2} &=  \Pi_A^{2}
\end{align*}
for some $\Pi\leq \Pi_A^0$. Writing $C=AB$ we obtain 
\[
\Pi_C^0 = \Pi,\;\;\; \Pi_C^{1} = \bar \Pi,\;\;\; \Pi_C^{2} = \zero
\]
where $\bar \Pi$ stands for $\one-\Pi$. Let $\varphi:Z_2\to [0,1]$ be a state. Representing the  projectors in the spectral decomposition of the unitary operators as columns we have
\begin{equation}\label{eq:partial-additive}
\varphi\column{\Pi^0_A}{\Pi^1_A}{\Pi^{2}_A} + 
\varphi\column{\Pi^1_A+\Pi}{\Pi^0_A-\Pi}{\Pi^{2}_A}  = \varphi\column{\Pi}{\bar \Pi}{\zero}. 
\end{equation}

\begin{proposition}
Let $Z$ denote the simplicial effect defined in Construction \ref{const:key example}. The following map is a bijection:
	\[
		\begin{tikzcd}[row sep=0em]
		\Den(\hH) \arrow[r] & \St( Z ) \\
			\rho \arrow[r,mapsto] & \left(\Pi \mapsto \Tr(\rho(\bar \Pi^0 -\frac{1}{2} \Pi^2)\right)
		\end{tikzcd}
		\]
where $\Pi:\ZZ_{/3}\to \Proj(\hH)$ is a projective measurement.
\end{proposition}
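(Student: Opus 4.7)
The plan is to verify the state condition for $\varphi_\rho$, then prove injectivity and surjectivity separately, using Gleason's theorem as the key input for surjectivity.

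First I would verify that $\varphi_\rho$ is actually a state on $Z$. Writing $\varphi_\rho(\Pi) = \Tr(\rho(\Pi^1 + \tfrac{1}{2}\Pi^2))$ (using $\bar\Pi^0 = \Pi^1+\Pi^2$), the value is manifestly in $[0,1]$. The cyclic relation $\varphi_\rho(\tau_1\Pi)=1-\varphi_\rho(\Pi)$ reduces to the computation that $\tau_1$, in the cyclic structure induced by $z=\omega\one$, sends the spectral triple $(\Pi^0,\Pi^1,\Pi^2)$ of a unitary $A$ with $A^3=\one$ to $(\Pi^1,\Pi^0,\Pi^2)$ (since $\tau_1(A)=\omega A^{-1}$); both sides then equal $\Tr(\rho(\Pi^0+\tfrac{1}{2}\Pi^2))$. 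For additivity on $2$-simplices $(A,B)\in Z_2$, I would decompose everything into joint eigenprojectors $\Pi^{ab}=\Pi_A^a\Pi_B^b$ and use only the defining vanishings $\Pi^{11}=\Pi^{12}=\Pi^{21}=0$. A direct expansion shows both $\Pi_{AB}^1+\tfrac{1}{2}\Pi_{AB}^2$ and $\Pi_A^1+\tfrac{1}{2}\Pi_A^2+\Pi_B^1+\tfrac{1}{2}\Pi_B^2$ equal $\Pi^{10}+\Pi^{01}+\Pi^{22}+\tfrac{1}{2}(\Pi^{20}+\Pi^{02})$, so additivity follows. Notably, this bypasses the delicate parametrization of $B$ in terms of $A$ and a subprojector $\Pi$ that appears earlier in the subsection.

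For injectivity, suppose $\varphi_\rho=\varphi_{\rho'}$. For any projector $P\in\Proj(\hH)$, the unitary $A=\one+(\omega-1)P$ has spectral triple $(P^\perp,P,0)$ and lies in $Z_1$, giving $\Tr(\rho P)=\Tr(\rho' P)$. Since projectors spectrally span $\Herm(\hH)$, this forces $\rho=\rho'$.

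Surjectivity is the main step and uses Gleason's theorem (Theorem~\ref{thm:Gleason}), applicable because $\dim\hH=9\geq 3$. Given a state $\varphi:Z\to N[0,1]$, define $\hat\varphi:\Proj(\hH)\to[0,1]$ by $\hat\varphi(P):=\varphi(\one+(\omega-1)P)$. The cyclic identity yields $\hat\varphi(P)+\hat\varphi(P^\perp)=1$. For orthogonal $P,Q$, consider $A=\one+(\omega-1)P$ and $B=\one+(\omega-1)Q$; orthogonality $PQ=0$ ensures that $(A,B)\in Z_2$ and that $AB=\one+(\omega-1)(P+Q)$, so additivity gives $\hat\varphi(P+Q)=\hat\varphi(P)+\hat\varphi(Q)$. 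Thus $\hat\varphi$ is a state on the effect algebra $\Proj(\hH)$, and Gleason produces a density operator $\rho$ with $\hat\varphi(P)=\Tr(\rho P)$.

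The remaining task is to extract the factor of $\tfrac{1}{2}$ attached to $\Pi^2$. Define $\tilde\varphi(P):=\varphi(P^\perp+\omega^2 P)$. Setting $A=P^\perp+\omega^2 P$, the triple $(A,A)$ is a valid $2$-simplex of $Z$ because $\Pi_A^1=0$ kills all three forbidden products; moreover $A^2=P^\perp+\omega P$, which has spectral triple $(P^\perp,P,0)$. Additivity yields $\hat\varphi(P)=\varphi(A^2)=2\tilde\varphi(P)$, hence $\tilde\varphi(P)=\tfrac{1}{2}\Tr(\rho P)$. Finally, for an arbitrary $A$ with spectral triple $(\Pi_A^0,\Pi_A^1,\Pi_A^2)$, set $C=\one+(\omega-1)\Pi_A^1$ and $D=\one+(\omega^2-1)\Pi_A^2$. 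Then $C,D$ commute, $CD=A$, and the orthogonality $\Pi_A^1\perp\Pi_A^2$ guarantees $(C,D)\in Z_2$. Applying additivity one last time gives $\varphi(A)=\hat\varphi(\Pi_A^1)+\tilde\varphi(\Pi_A^2)=\Tr(\rho(\Pi_A^1+\tfrac{1}{2}\Pi_A^2))=\varphi_\rho(\Pi)$, completing the surjectivity and hence the bijection. The main obstacle is the bookkeeping in this last step: assembling the right two explicit $2$-simplices in $Z$ (one to handle the $\omega$-eigenspace and one for the $\omega^2$-eigenspace) so that the inverseless constraint is satisfied while still recovering the full operator $A$ via composition.
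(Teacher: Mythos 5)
Your proof is correct, and it reaches the same two key inputs as the paper --- Gleason's theorem applied to $\hat\varphi(P)=\varphi(\bar P+\omega P)$, and the ``halving'' identity $\varphi(\bar P+\omega P)=2\varphi(\bar P+\omega^2 P)$ --- but it gets there by a genuinely different route. The paper first parametrizes \emph{all} $2$-simplices of $Z$ (writing $\Pi_B^0=\Pi_A^1+\Pi$, $\Pi_B^1=\Pi_A^0-\Pi$, $\Pi_B^2=\Pi_A^2$ for a subprojector $\Pi\leq\Pi_A^0$), packages the state condition into a single master identity, and then extracts the needed relations by repeated specialization ($\Pi=\zero$, $\Pi=\Pi_A^0$, $\Pi_A^1=\zero$, etc.). You instead exhibit a short list of explicit $2$-simplices --- $(\,\one+(\omega-1)P,\;\one+(\omega-1)Q\,)$ for orthogonal $P,Q$; the pair $(A,A)$ with $\Pi_A^1=0$; and the factorization $A=CD$ with $C=\one+(\omega-1)\Pi_A^1$, $D=\one+(\omega^2-1)\Pi_A^2$ --- checking membership in $Z_2$ directly from the vanishing conditions $\Pi^{11}=\Pi^{12}=\Pi^{21}=0$. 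This buys several things: the well-definedness check via joint projectors $\Pi^{ab}$ avoids the parametrization entirely; the final step $\varphi(A)=\hat\varphi(\Pi_A^1)+\tilde\varphi(\Pi_A^2)$ makes explicit why the Gleason density operator actually reproduces $\varphi$ on all of $Z_1$ (in the paper this is implicit in the specialized identities); and you supply the injectivity argument, which the paper's proof does not address. The paper's approach, in exchange, gives a complete description of $Z_2$ that is reusable elsewhere. Both arguments rely on the cyclic structure being the one induced by the central element $1\in\ZZ_{/3}$ (equivalently $z=\omega\one$), which is what makes $\tau_1(A)=\omega A^{-1}$ and keeps $\{(1,1),(2,1),(1,2)\}$ a $\tau_2$-orbit; your use of this is consistent with the construction.
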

\begin{proof}
We begin by observing that the function associated to $\rho$ satisfies Equation \ref{eq:partial-additive}, thus it is indeed a state on $Z$. It remains to show that all states on this simplicial effect are of this form. 

We begin by specializing Equation \ref{eq:partial-additive} to the extreme cases $\zero \leq \Pi \leq \Pi^0_A$.
\begin{itemize}
\item Setting $\Pi=\zero$ gives
\[
\varphi\column{\Pi^0_A}{\Pi^1_A}{\Pi^{2}_A} + 
\varphi\column{\Pi^1_A}{\Pi^0_A}{\Pi^{2}_A}  = 1. 
\]
Further setting $\Pi_A^2=\zero$ we obtain
\begin{equation}\label{eq:swap-orth}
\varphi\column{\Pi^0_A}{\bar \Pi^0_A}{\zero} + 
\varphi\column{\bar \Pi^0_A}{\Pi^0_A}{\zero}  = 1. 
\end{equation}
\item  Setting $\Pi=\Pi_A^0$, we obtain
\begin{equation}\label{eq:special-case}
\varphi\column{\Pi^0_A}{\Pi^1_A}{\Pi^{2}_A}  
 = \varphi\column{\Pi_A^0}{\bar \Pi_A^0}{\zero}
-
\varphi\column{\bar \Pi^{2}_A}{\zero}{\Pi^{2}_A}  
 .
\end{equation}
Further setting $\Pi_A^1=\zero$ in this equation we obtain 
\begin{equation}\label{eq:half}
2\varphi\column{\Pi^0_A}{\zero}{\Pi^{0}_A}   = \varphi\column{\Pi_A^0}{\bar \Pi_A^0}{\zero}.
\end{equation}
In particular, setting $\Pi_A^0=\zero$ this gives us $\varphi(\omega^{2}\one) = 1/2$.
\end{itemize}
Next, we apply Equation \ref{eq:special-case} to the two terms on the left hand side of Equation \ref{eq:partial-additive}. In addition, using Equation \ref{eq:half} we obtain
\begin{equation}\label{eq:third-zero}
\varphi\column{\Pi_A^0}{\bar \Pi_A^0}{\zero} +
\varphi\column{\Pi_A^1+\Pi}{\overline {\Pi_A^1+\Pi}}{\zero} =
\varphi\column{\bar \Pi_A^{2}}{ \Pi_A^{2}}{\zero} + \varphi\column{\Pi}{\bar \Pi}{\zero}.
\end{equation} 
Next, setting $\Pi=\zero$ and using Equation \ref{eq:swap-orth} gives
\[
\varphi\column{\bar \Pi^0_A}{\Pi^0_A}{\zero} +
\varphi\column{\bar \Pi^1_A}{\Pi^1_A}{\zero} =
\varphi\column{\Pi^2_A}{\bar\Pi^2_A}{\zero} .
\] 
This implies that restricting to the second factor induces a function
\[
\begin{tikzcd}[row sep=0em]
		\St(Z) \arrow[r] & \St( P_\hH(N\ZZ_{/3}) ) \\
		\varphi \arrow[r,mapsto] & (\Pi \mapsto \varphi(\Pi^1) )
		\end{tikzcd}
\]
By part (2) of Example \ref{ex:states}, this proves the surjectivity of the map in the statement of the Proposition.
\end{proof}

  \bibliography{bib.bib}
\bibliographystyle{ieeetr}

\end{document}